\numberwithin{equation}{section}
\newcommand\R{\mathbb{R}}
\newcommand\Rinf{\overline{\mathbb{R}}}
\newcommand\inter[1]{ {\rm \textbf{int}}(#1)} 
\newcommand\closure[1]{ {\rm \textbf{cl}}(#1)} 
\newcommand\dom[1]{ \bs{{\rm dom}}(#1)} 
\newcommand\Dom[1]{ \bs{{\rm Dom}}(#1)} 
\newcommand\dist{ \bs{{\rm dist}}} 
\newcommand\gf{\varphi} 
\newcommand\gh{\psi} 
\newcommand\fgam[3]{#1_{#3}^{#2}}
\newcommand\fgamepsk[3]{#1^{#2,\varepsilon_k}_{#3}}
\newcommand\fgamepsko[3]{#1^{#2,\varepsilon_{k+1}}_{#3}}
\newcommand\hfgam[3]{#1^{#3, #2}}
\newcommand\prox[3]{ \bs{{\rm prox}}_{#2#1}^{#3}}
\newcommand\ov[1]{\overline{#1}}
\newcommand\mb{\mathbf{B}}
\newcommand\bs[1]{\boldsymbol{#1}}
\newcommand\argmint[1]{\mathop{\bs{\arg\min}}\limits_{#1}}
\newcommand\Nz{\mathbb{N}_0}
\newcommand\fv{\widehat{\gf}}
\newcommand\Tprox[3]{ \bs T_{#2#1}^{#3}}
\newcommand\maj[3]{\mathcal{M}_{#1}^{#2}(#3)}
\newcommand\argmin[1]{\bs{\arg\min}_{#1}}
\newcommand\st{\mathcal{S}}
\journalname{}
\begin{document}

\title{
First-order majorization-minimization meets high-order majorant: Boosted inexact high-order forward-backward method
}

\titlerunning{First-order majorization-minimization meets high-order majorant}        

\author{Alireza Kabgani         \and
        Masoud Ahookhosh 
}


\institute{A. Kabgani, M. Ahookhosh \at
              Department of Mathematics, University of Antwerp, Antwerp, Belgium. \\
              \email{alireza.kabgani@uantwerp.be, masoud.ahookhosh@uantwerp.be}             \\
              The Research Foundation Flanders (FWO) research project G081222N and UA BOF DocPRO4 projects with ID 46929 and 48996 partially supported the paper's authors.
}

\date{Received: date / Accepted: date}
\maketitle

\begin{abstract}
This paper introduces a first-order majorization–minimization framework based on a high-order majorant for continuous functions, incorporating a non-quadratic regularization term of degree $p>1$. Notably, it is shown to be valid if and only if the function is 
$p$-paraconcave, thus extending beyond Lipschitz and H\"{o}lder gradient continuity for  $p \in (1,2]$, and implying concavity for $p>2$.  In the smooth setting, this majorant recovers a variant of the classical descent lemma with quadratic regularization. Building on this foundation, we develop a high-order inexact forward–backward algorithm (HiFBA) and its line-search–accelerated variant, named Boosted HiFBA. For convergence analysis, we introduce a high-order forward–backward envelope (HiFBE), which serves as a Lyapunov function. We establish subsequential convergence under suitable inexactness conditions, and we prove global convergence with linear rates for functions satisfying the Kurdyka–\L{}ojasiewicz inequality. Our preliminary experiments on linear inverse problems
and regularized nonnegative matrix factorization highlight the efficiency of HiFBA and its boosted variant, demonstrating their potential for solving challenging nonconvex optimization problems.

 \keywords{Nonconvex optimization \and Majorization-minimization \and High-order descent lemma \and High-order forward-backward method \and High-order forward-backward envelope \and Inexact oracle \and Linear convergence}

 \subclass{90C26 \and 65K05 \and 49J52 \and 90C30 \and 49M27}
\end{abstract}

\section{Introduction}
\label{intro}

This paper addresses the structured nonsmooth and nonconvex optimization problem 
\begin{equation}\label{eq:mainproblemcom}
{\mathop {\mathrm{\bs\min}}\limits_{x\in \R^n}}\ \gf (x) := f(x) + g(x),
\end{equation}
where $f:\R^n\to \R$ is smooth (potentially nonconvex), and  $g:\R^n\to \Rinf:=\R\cup\{+\infty\}$ is proper, lower semicontinuous, and possibly both nonconvex and nonsmooth. In such problems, the function $f$ commonly stands for a \textit{data fidelity} term, while $g$ typically represents a \textit{regularizer}, imparting additional structure to the solution set. The above formulation is broad enough to encompass many prominent applications across signal and image processing, compressed sensing, machine learning, data science, statistics, and so on; see, e.g., \cite{Ahookhosh21,beck2019optimization,combettes2011proximal,demarchi2023constrained,Themelis18,trandinh2022hybrid} and references therein.  However, in the classical setting, it is commonly assumed that $f$ has a \textit{Lipschitz continuous gradient}, which is not the case in many practical applications, thus limiting the applicability of classical optimization methods. This motivates the quest for relaxed assumptions on the function $f$, aiming to develop algorithms to address \eqref{eq:mainproblemcom} that are both theoretically sound and computationally efficient.

A well-established strategy for dealing with the problem \eqref{eq:mainproblemcom} is the so-called \textit{majorization–minimization} (MM) framework; see, e.g., \cite{lange2016mm,sun2016majorization}.
The central idea is to construct a simple and tractable surrogate model for the function $\varphi$ that simultaneously approximates and majorizes this function, which will be minimized at each iteration of our scheme. More specifically, the framework relies on the existence of a majorant $\mathcal{M}(x,y)$ for the smooth function $f$, satisfying $f(y)\leq \mathcal{M}(x,y)$ for all $x,y\in \R^n$, leading to a composite majorant $\psi(x,y):=  \mathcal{M}(x,y)+g(y)$ for $\varphi$. Minimizing the surrogate $\psi(x,y)$ with respect to $y$ constitutes the core step of MM. We emphasize that, depending on the properties of $f$, the majorant $\mathcal{M}(x,y)$ can be constructed using first-, second-, or higher-order derivatives, which leads to first-, second-, or higher-order methods; see, e.g., \cite{Ahookhosh21, beck2017first, bauschke2017descent, Nesterov2023a}.

The class of first-order majorants has received considerable attention over the past few decades because of its simplicity and low memory requirements, making it well-suited for large-scale optimization problems; see, e.g., \cite{Ahookhosh21,bauschke2017descent,bonettini2025linesearch,combettes2005signal,Stella17,Themelis18}. The most common example is the quadratic model, valid whenever $f$ has a Lipschitz continuous gradient with constant $L>0$, i.e.,
\begin{equation}\label{eq:lipdesc}
f(y)\leq \mathcal{M}(x,y): = f(x)+\langle \nabla f(x), y-x\rangle +\frac{L}{2}\Vert y - x\Vert^2,\qquad \forall x, y\in \R^n,
\end{equation}
which is commonly referred to as the {\it first-order descent lemma}; see, e.g., \cite[Proposition A.24]{bertsekas1999nonlinear}. Assuming the H\"older continuity of the gradient of $f$ with modulus $\nu\in (0,1]$ and constant $L>0$ ensures
\begin{equation}\label{eq:holderianDescentLemma}
    f(y)\leq \mathcal{M}(x,y): = f(x) + \langle \nabla f(x), y - x \rangle +\frac{L}{1 + \nu}\Vert y-x \Vert ^{1+\nu},\qquad \forall x, y\in \R^n,
\end{equation}
which generalizes \eqref{eq:lipdesc} and is referred to as the {\it H\"{o}lderian descent lemma}; see, e.g., 
\cite{Nesterov15univ}, \cite[Lemma~1]{yashtini2016global} and see Fact~\ref{fact:holder:declem} in the current work.
Beyond Lipschitz and  H\"{o}lder continuity, many important functions instead exhibit \textit{relative smoothness} with respect to a convex distance-generating function $h$, which means that there exists $L>0$ such that $Lh-f$ is convex. This leads to the inequality
\begin{equation}\label{eq:bregmanDescentLemma}
f(y)\leq \mathcal{M}(x,y): = f(x)+\langle \nabla f(x), y-x\rangle +L\bs D_h(y,x),\qquad \forall x, y\in \R^n,
\end{equation}
where $\bs D_h(y, x) = h(y) - h(x) - \langle \nabla h(x), y - x \rangle$ denotes the Bregman distance associated with the kernel $h$, which is referred to as the {\it non-Euclidean descent lemma}; see \cite{bauschke2017descent} and its multi-block variants \cite{ahookhosh2021multi,ahookhosh2021block,khanh2022block}. These descent lemmas serve as cornerstones for many first-order optimization methods, including gradient descent \cite{bertsekas1999nonlinear}, accelerated gradient methods \cite{beck2009fast,nesterov1983method}, forward-backward splitting \cite{combettes2005signal,fukushima1981generalized}, accelerated forward-backward methods \cite{attouch2016rate,villa2013accelerated}, and line-search-accelerated forward-backward methods \cite{Ahookhosh21,bonettini2025linesearch,Themelis18}, among others.

Forward-backward splitting (proximal gradient) methods, among others, have proven particularly effective, often exhibiting superior numerical performance compared to classical subgradient-based and bundle methods. A generic iteration of the forward-backward method is given by
\begin{equation}\label{eq:fbsGeneral}
x^{k+1}\in \argmint{y\in\R^n}\left\{f(x^k)+\langle \nabla f(x^k), y-x^k\rangle+g(y)+\frac{1}{\gamma}\bs \zeta(y, x^k)\right\},
\end{equation}
where $\bs\zeta(\cdot,\cdot)$ is commonly a proximity term such as $\bs\zeta(y, x^k)=\frac{1}{2}\Vert y - x^k\Vert^2$, $\bs\zeta(y, x^k)=\frac{1}{1 + \nu}\Vert y-x^k\Vert ^{1+\nu}$, 
or $\bs\zeta(y, x^k)=\bs D_h(y,x^k)$ with respect to the majorants \eqref{eq:lipdesc}, \eqref{eq:holderianDescentLemma}, and \eqref{eq:bregmanDescentLemma}, respectively, with $\gamma\in (0, L^{-1})$. Although this method with the squared norm and the Bregman distance has been well studied (e.g. \cite{Ahookhosh21, attouch2016rate, bauschke2017descent, bonettini2025linesearch, combettes2005signal, fukushima1981generalized,Themelis18,villa2013accelerated}), there has been little study of its variant with the majorization term $\bs\zeta(y, x^k)=\frac{1}{1 + \nu}\Vert y-x^k\Vert ^{1+\nu}$; see, e.g. \cite{Bredies09,Nesterov15univ}. 

In the current study, we introduce and characterize the notion of a \textit{high-order majorant} for continuous functions and then focus on the specific case of the \textit{high-order descent lemma} for smooth functions, which yields the majorant
\begin{equation}\label{eq:highOrdDescentLemma}
    f(y)\leq \mathcal{M}(x,y): = f(x)+\langle \nabla f(x), y-x\rangle +\frac{L_p}{p}\Vert y - x\Vert^p,\qquad \forall x, y\in \R^n,
\end{equation}
for some $p>1$ and $L_p>0$. Indeed, we seek the largest class of functions that satisfies the above inequality, encompassing problems with the gradient of $f$ that is Lipschitz or H\"{o}lder continuous. Here, the term ``high-order" refers to the power $p>1$ of the regularization term in \eqref{eq:highOrdDescentLemma}.
This majorant naturally leads to a {\it high-order forward-backward splitting method}, with the iteration of the form
\begin{equation}\label{eq:fbsGeneral:p}
    x^{k+1}\in \argmint{y\in\R^n}\left\{f(x^k)+\langle \nabla f(x^k), y-x^k\rangle+g(y)+\frac{1}{p\gamma}\Vert y - x^k\Vert^p\right\},
\end{equation}
and to a corresponding {\it high-order forward-backward envelope}, defined as
\begin{equation}\label{eq:fbsGeneral:fun}
   \fgam{\gf}{p}{\gamma}(x) = \mathop{\bs{\inf}}\limits_{y\in \R^n}\left\{f(x)+\langle \nabla f(x), y-x\rangle+g(y)+\frac{1}{p\gamma}\Vert y - x\Vert^p\right\},
\end{equation}
for $\gamma\in (0, L_p^{-1})$.
Solving \eqref{eq:fbsGeneral:p} approximately offers a flexible inexact setting by allowing an appropriate choice for $p$ based on the smoothness properties of $f$, thus establishing a novel paradigm to deal with \eqref{eq:mainproblemcom}.

\subsection{{\bf Contribution}}\label{sec:contribution}

The contributions of this paper are summarized as follows:

\begin{description}[wide, labelwidth=!, labelindent=0pt]
    \item[{\bf(i)}] {\bf First-order majorization-minimization with high-order majorants.} 
    We introduce a high-order majorant for the continuous functions $f$, constructed using the limiting subdifferential and a high-order regularization term of degree $p>1$, as defined in Definition~\ref{def:power majorization}. Notably, our characterizations of high-order majorants establish that a function holds a high-order majorant if and only if it is paraconcave. Here, we recognize two cases: (i) $p\in (1,2]$; and (ii) $p>2$. For $p\in (1,2]$, we show that the $p$-paraconcave functions form the broadest class of functions that admit a high-order majorant, and this class includes, in particular, functions with Lipschitz or H\"{o}lder continuous gradients. For $p>2$, it is shown that only concave functions admit a high-order majorant, a property particularly relevant when minimizing the difference of convex, weakly convex, or relatively smooth functions. Under Fr\'{e}chet differentiability of $f$, this notion reduces to a high-order descent lemma, which employs the first-order derivative of $f$ and a regularizer of degree $p>1$. In fact, this is a natural variant of the classical descent lemma, which serves as a foundation for designing first-order majorization-minimization methods with high-order majorants. 

    \item[{\bf(ii)}] {\bf High-order forward-backward envelope.} 
    We introduce the high-order forward-backward splitting mapping (HiFBS) and the corresponding high-order forward-backward envelope (HiFBE) for $p>1$, extending the corresponding classical notions that stand with $p=2$; see \cite{patrinos2013proximal, Stella17, Themelis18}. We then establish fundamental properties of HiFBE, which serve as key tools to analyze convergence behavior and rates of the boosted inexact high-order forward-backward method presented in Section~\ref{sec:HiFBEMET}. We also examine the relationships among the first-order reference points of HiFBE and those of the original objective function.

   \item[{\bf(iii)}] {\bf Inexact boosted high-order forward-backward method.} 
   By approximating the solution of the HiFBS mapping, we construct an inexact oracle and develop the high-order inexact forward-backward algorithm (HiFBA). Building on this, we introduce the boosted variant of HiFBA (Boosted HiFBA), which enhances it by applying a suitable nonmonotone line search that permits search directions not to be descent. Under suitable assumptions on the approximation quality of the HiFBS operator, we establish well-definedness and subsequential convergence of Boosted HiFBA, as well as global convergence and local linear convergence rates under the Kurdyka-{\L}ojasiewicz (KL) property. Preliminary numerical experiments on regularized inverse problems and regularized nonnegative matrix factorization demonstrate the promising performance of HiFBA, providing empirical support for theoretical foundations.
          
\end{description}

\subsection{{\bf Organization}}\label{sec:contribution}
The remainder of this paper is organized as follows. Section~\ref{sec:prelim} introduces the necessary preliminaries and notation. In Section~\ref{sec:first:majormin}, we present a detailed characterization of the high-order majorant and the descent lemma and introduce the class of paraconcave functions. Section~\ref{sec:hifbe} defines the high-order forward-backward splitting mapping (HiFBS) and the high-order forward-backward envelope (HiFBE), establishing their fundamental properties. Section~\ref{sec:HiFBEMET} develops high-order forward–backward methods that incorporate inexact strategies and provide convergence analysis under the Kurdyka-\L{}ojasiewicz property. Section~\ref{sec:numerical} reports preliminary numerical experiments. Finally, Section~\ref{sec:disc} concludes the paper with closing remarks.



\section{Preliminaries and notation}
\label{sec:prelim}

Throughout this paper, $\R^n$ denotes the $n$-dimensional \textit{Euclidean space}, 
while $\Vert \cdot \Vert$ and $\langle \cdot, \cdot \rangle$ represent the \textit{Euclidean norm}
 and \textit{inner product}, respectively. 
We denote the set of \textit{natural numbers} by $\mathbb{N}$ and let $\Nz := \mathbb{N} \cup \{0\}$. 
The set $\mb(\ov{x}; r)$ is the \textit{open ball} centered at $\ov{x} \in \R^n$ with radius $r > 0$. 
The \textit{interior} and \textit{closure} of a set $C \subseteq \R^n$ are denoted by $\inter{C}$ and
$\closure{C}$, respectively. 
The \textit{distance} between $x \in \R^n$ and a set $C$ is defined as $\dist(x, C) := \bs\inf_{y \in C} \Vert y - x \Vert$. 
We adopt the convention $\infty - \infty = \infty$.
The \textit{effective domain} of $\gh: \R^n \to \Rinf := \R \cup \{+\infty\}$ is $\dom{\gh} := \{x \in \R^n \mid \gh(x) < +\infty\}$
and $\gh$ is called \textit{proper} if $\dom{\gh} \neq \emptyset$. 
The set $\mathcal{L}(\gh, \lambda) := \{x \in \R^n \mid \gh(x) \leq \lambda\}$ is the \textit{sublevel set} of $\gh$ at $\lambda \in \R$. 
The set of \textit{minimizers} of $\gh$ over $C \subseteq \R^n$ is denoted by $\argmin{x \in C} \gh(x)$. 
The function $\gh$ is \textit{lower semicontinuous} (lsc) at $\ov{x} \in \R^n$ if $\bs\liminf_{k \to \infty} \gh(x^k) \geq \gh(\ov{x})$ for every sequence $x^k \to \ov{x}$. 
It is \textit{coercive} if $\bs\lim_{\Vert x \Vert \to +\infty} \gh(x) = +\infty$. 
For a set-valued map $\Psi: \R^n \rightrightarrows \R^n$, the \textit{domain} is defined as $\Dom{\Psi} := \{x \in \R^n \mid \Psi(x) \neq \emptyset\}$.

If $p > 1$, the gradient of $\frac{1}{p} \Vert x \Vert^p$ is $\nabla \left(\frac{1}{p} \Vert x \Vert^p \right) = \Vert x \Vert^{p-2} x$, 
where the convention $\frac{0}{0} = 0$ is adopted for $x = 0$. A point $\ov{x}$ is called a \textit{limiting point} of a sequence $\{x^k\}_{k \in \Nz}$ if $x^k \to \ov{x}$, and it is called a \textit{cluster point} if there exists a subsequence $x^j \to \ov{x}$ with $j\in J$
for some infinite subset $J \subseteq \Nz$. 

For two positive real numbers $a$ and $b$  and $p \in [0,1]$, it holds that
\begin{equation}\label{eq:intrp:p01}
    (a+b)^p\leq a^p+b^p.
\end{equation}
Invoking Young's inequality and \eqref{eq:intrp:p01}, if $a, b\geq 0$ and $p, q>1$ such that $\frac{1}{p}+\frac{1}{q}=1$ 
(i.e., $q=\frac{p}{p-1}$) we come to
\begin{equation}\label{eq:intrp:p02}
(ab)^\frac{1}{p}\leq \left(\frac{a^p}{p}+\frac{b^q}{q}\right)^{\frac{1}{p}}\leq \frac{a}{p^{\frac{1}{p}}}+\frac{b^{\frac{q}{p}}}{q^{\frac{1}{p}}}\leq a+b^{\frac{q}{p}}=a+b^{\frac{1}{p-1}}.
\end{equation}

A proper function $\gh: \R^n \to \Rinf$  is  \textit{Fr\'{e}chet differentiable} at $\ov{x}\in \inter{\dom{\gh}}$ 
with \textit{Fr\'{e}chet derivative}  
$\nabla \gh(\ov{x})$
 if 
\[
\mathop{\bs\lim}\limits_{x\to \ov{x}}\frac{\gh(x) -\gh(\ov{x}) - \langle \nabla \gh(\ov{x}) , x - \ov{x}\rangle}{\Vert x - \ov{x}\Vert}=0.
\]
For a set $C\subseteq\R^n$, the notation $\gh\in \mathcal{C}^{k}(C)$ indicates that $\gh$ is $k$-times continuously differentiable on $C$,
where $k\in \mathbb{N}$. 
The \textit{Fr\'{e}chet/regular} and \textit{Mordukhovich/limiting subdifferentials} of a function $\gh: \R^n\to\Rinf$ at $\ov{x}\in \dom{\gh}$ are  defined as~\cite{Mordukhovich2018,Rockafellar09}
\[
\widehat{\partial}\gh(\ov{x}):=\left\{\zeta\in \R^n\mid~\mathop{\bs\liminf}\limits_{x\to \ov{x}}\frac{\gh(x)- \gh(\ov{x}) - \langle \zeta, x - \ov{x}\rangle}{\Vert x - \ov{x}\Vert}\geq 0\right\},
\]
and
\[
\partial \gh(\ov{x}):=\left\{\zeta\in \R^n\mid~\exists x^k\to \ov{x}, \zeta^k\in \widehat{\partial}\gh(x^k),~\text{with}~\gh(x^k)\to \gh(\ov{x})~\text{and}~ \zeta^k\to \zeta\right\}.
\]
We have $\widehat{\partial}\gh(\ov{x})\subseteq\partial \gh(\ov{x})$.
The class of functions with H\"{o}lder continuous gradients has recently garnered increased attention for its applications in optimization \cite{Berger2020Quality,bolte2023backtrack,Cartis2017Worst,Nesterov15univ,yashtini2016global}.
\begin{definition}[H\"{o}lder continuity]\label{def:hold}
A function $\gh: \R^n \to \R$ has a \textit{$\nu$-H\"{o}lder continuous gradient} on $\R^n$ with $\nu \in (0, 1]$ if it is Fr\'{e}chet differentiable and  there exists a constant $L_\nu\geq 0$ such that
\begin{equation}\label{eq:nu-Holder continuous gradient}
\Vert \nabla \gh(y)- \nabla \gh(x)\Vert \leq L_\nu \Vert y-x\Vert^\nu, \qquad \forall x, y\in \R^n.
\end{equation}
\end{definition}
The class of functions with $\nu$-H\"{o}lder continuous gradients is denoted by $\mathcal{C}^{1, \nu}_{L_\nu}(\R^n)$ and termed \textit{weakly smooth}, and if $\nu=1$, $\gh$ is called \textit{$L$-smooth}. The following descent lemma from \cite[Lemma~1]{yashtini2016global} applies to $\gh\in \mathcal{C}^{1, \nu}_{L_{\nu}}(\R^n)$. 
\begin{fact}[H\"olderian descent lemma]\label{fact:holder:declem}\cite{yashtini2016global}
Let $\gh\in \mathcal{C}^{1, \nu}_{L_{\nu}}(\R^n)$ with $\nu\in (0, 1]$. Then, for all $x, y\in \R^n$, 
\begin{equation}\label{eq:upperbounf for c1,alp}
\left\vert\gh(y)- \gh(x) - \langle \nabla \gh(x), y - x \rangle\right\vert\leq\frac{L_{\nu}}{1 + \nu}\Vert y-x \Vert ^{1+\nu}.
\end{equation}
\end{fact}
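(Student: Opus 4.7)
The plan is to invoke the fundamental theorem of calculus along the segment joining $x$ and $y$, producing an integral representation of the first-order error, and then to control the integrand using the H\"older continuity hypothesis \eqref{eq:nu-Holder continuous gradient}.

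Concretely, I would parametrize the segment by $x_t := x + t(y-x)$ for $t \in [0,1]$. Since $\gh$ is Fr\'echet differentiable on $\R^n$ with continuous gradient (continuity follows from \eqref{eq:nu-Holder continuous gradient}), the map $t \mapsto \gh(x_t)$ is continuously differentiable on $[0,1]$ with derivative $\langle \nabla \gh(x_t), y-x\rangle$. Applying the standard one-dimensional fundamental theorem of calculus and subtracting $\langle \nabla \gh(x), y-x\rangle = \int_0^1 \langle \nabla \gh(x), y-x\rangle\, dt$ from both sides yields the exact error representation
\[
\gh(y) - \gh(x) - \langle \nabla \gh(x), y - x\rangle \;=\; \int_0^1 \langle \nabla \gh(x_t) - \nabla \gh(x),\, y - x\rangle\, dt.
\]

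From here, the proof is routine: take absolute values, pull them inside the integral, apply the Cauchy--Schwarz inequality pointwise in $t$, and then use \eqref{eq:nu-Holder continuous gradient} together with $\Vert x_t - x\Vert = t\Vert y-x\Vert$ to bound
\[
\Vert \nabla \gh(x_t) - \nabla \gh(x)\Vert \;\leq\; L_\nu\, t^\nu \Vert y-x\Vert^\nu.
\]
This produces an integrand dominated by $L_\nu t^\nu \Vert y-x\Vert^{1+\nu}$, and evaluating $\int_0^1 t^\nu\, dt = \frac{1}{1+\nu}$ delivers the claimed estimate.

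There is no real obstacle here: the only point worth a moment's care is the applicability of the fundamental theorem of calculus under mere Fr\'echet differentiability, but this is immediate because $\nabla \gh$ is continuous (indeed $\nu$-H\"older) and so the composition $t \mapsto \gh(x_t)$ has a continuous derivative on the compact interval $[0,1]$. All remaining steps are pointwise inequalities and a single elementary integral.
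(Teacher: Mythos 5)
Your proof is correct and is exactly the standard argument: the paper offers no proof of its own, merely citing \cite{yashtini2016global}, and the proof there (Lemma~1) is precisely this fundamental-theorem-of-calculus representation followed by Cauchy--Schwarz, the H\"older bound $\Vert\nabla\gh(x_t)-\nabla\gh(x)\Vert\le L_\nu t^\nu\Vert y-x\Vert^\nu$, and the integral $\int_0^1 t^\nu\,dt=\frac{1}{1+\nu}$.
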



We recall the concept of uniform level boundedness, a crucial property of optimal value mappings.
\begin{definition}[Uniform level boundedness]\label{def:lboundlunif}\cite[Definition~1.16]{Rockafellar09}
A function $\Phi: \R^n \times \R^m\to \Rinf$ with values given by $\Phi(u, x)$ is \textit{level-bounded in $u$ locally uniformly in $x$} if, for each $\ov{x}\in \R^m$ and $\lambda\in \R$, there exists a neighborhood $V$ of $\ov{x}$ along with a bounded set $B\subseteq \R^n$ such that
$\{u\mid \Phi(u, x)\leq \lambda\}\subseteq B$ for all $x\in V$.
\end{definition}

Recent studies have revisited the \textit{high-order proximal operator} (HOPE) and \textit{high-order Moreau envelope} (HOME) for their utility in developing practical algorithms for nonsmooth, nonconvex optimization problems 
\cite{Ahookhosh2025,Kabgani24itsopt,KecisThibault15}.
We briefly review these concepts and related properties.

\begin{definition}[High-order proximal operator and Moreau envelope]
Let $p>1$, $\gamma>0$, and $\gh: \R^n \to \Rinf$ be a proper function. 
The \textit{high-order proximal operator} (\textit{HOPE}) of $\gh$ with parameter $\gamma$,
$\prox{\gh}{\gamma}{p}: \R^n \rightrightarrows \R^n$, is 
    \begin{equation}\label{eq:Hiorder-Moreau prox}
       \prox{\gh}{\gamma}{p} (x):=\argmint{y\in \R^n} \left\{\gh(y)+\frac{1}{p\gamma}\Vert x- y\Vert^p\right\},
    \end{equation}     
and the \textit{high-order Moreau envelope} (\textit{HOME}) of $\gh$ with parameter $\gamma$, 
$\fgam{\gh}{\gamma,p}{}: \R^n\to \R\cup\{\pm \infty\}$, 
is 
    \begin{equation}\label{eq:Hiorder-Moreau env}
    \fgam{\gh}{\gamma,p}{}(x):=\mathop{\bs{\inf}}\limits_{y\in \R^n} \left\{\gh(y)+\frac{1}{p\gamma}\Vert x- y\Vert^p\right\}.
    \end{equation}
    \end{definition}

We recall the notion of high-order prox-boundedness from \cite{Kabgani24itsopt}.
\begin{definition}[High-order prox-boundedness]\label{def:s-prox-bounded}\cite[Definition~3.3]{Kabgani24itsopt}
A proper function $\gh:\R^n\to \Rinf$ is \textit{high-order prox-bounded} with order $p>1$ if there exist
$\gamma>0$ and $x\in \R^n$ such that
$\fgam{\gh}{\gamma,p}{}(x)>-\infty$. 
The supremum of all such $\gamma$ is denoted by $\gamma^{\gh, p}$ and is referred to as the threshold of high-order prox-boundedness for $\gh$.
\end{definition}

\begin{remark}\label{rem:prox-bounded}
If $\gh:\R^n\to \Rinf$ is convex or bounded below, it satisfies Definition~\ref{def:s-prox-bounded} with $\gamma^{\gh, p}=+\infty$.
\end{remark}

We conclude this section by demonstrating the well-definedness of the HOME and HOPE notions.
 \begin{fact}[Well-definedness of HOME and HOPE]\label{fact:level-bound+locally uniform}\cite[Theorem~3.4]{Kabgani24itsopt}
Let $p>1$ and $\gh: \R^n\to \Rinf$ be a proper lsc function that is high-order prox-bounded with threshold $\gamma^{\gh, p}>0$. For each $\gamma\in (0, \gamma^{\gh, p})$ and $x\in \R^n$,  the set $\prox{\gh}{\gamma}{p}(x)$ is nonempty and compact, and $\hfgam{\gh}{p}{\gamma}(x)$ is finite.
 \end{fact}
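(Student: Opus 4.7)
The plan is to establish the two conclusions---finiteness of $\hfgam{\gh}{p}{\gamma}(x)$ and nonemptiness plus compactness of $\prox{\gh}{\gamma}{p}(x)$---by means of a single ``transport'' estimate that promotes prox-boundedness at one reference pair $(\gamma_0, x_0)$ to bounds at an arbitrary target pair $(\gamma, x)$. All estimates rely on the fact that $\gamma$ is strictly below the threshold, leaving room for a small parameter deterioration.

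First I would unpack the threshold: since $\gamma < \gamma^{\gh,p}$, pick $\gamma_0 \in (\gamma, \gamma^{\gh,p})$ and a point $x_0 \in \R^n$ with $M_0 := \hfgam{\gh}{p}{\gamma_0}(x_0) > -\infty$. The core technical device is the Jensen-type bound $(a+b)^p \leq (1+\varepsilon)^{p-1}\bigl(a^p + \varepsilon^{1-p} b^p\bigr)$ for $a,b \geq 0$ and $\varepsilon > 0$, which follows from convexity of $t \mapsto t^p$ on $[0, \infty)$. Combined with the triangle inequality $\|x_0 - y\| \leq \|x - y\| + \|x_0 - x\|$, this rearranges into
\[
\|x-y\|^p \geq (1+\varepsilon)^{1-p}\,\|x_0 - y\|^p - \varepsilon^{1-p}\,\|x_0-x\|^p.
\]
Choosing $\varepsilon \leq (\gamma_0/\gamma)^{1/(p-1)} - 1$ so that $\gamma(1+\varepsilon)^{p-1} \leq \gamma_0$, multiplication by $1/(p\gamma)$ and addition of $\gh(y)$ yield, for every $y \in \R^n$,
\[
\gh(y) + \frac{1}{p\gamma}\,\|x-y\|^p \;\geq\; \gh(y) + \frac{1}{p\gamma_0}\,\|x_0-y\|^p - K_x \;\geq\; M_0 - K_x,
\]
where $K_x := \varepsilon^{1-p}\|x_0-x\|^p/(p\gamma)$. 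Together with $\hfgam{\gh}{p}{\gamma}(x) \leq \gh(y_0) + \frac{1}{p\gamma}\|x-y_0\|^p < +\infty$ for any fixed $y_0 \in \dom{\gh}$ (nonempty by properness of $\gh$), this proves finiteness of $\hfgam{\gh}{p}{\gamma}(x)$.

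For nonemptiness and compactness of $\prox{\gh}{\gamma}{p}(x)$, I would sharpen the preceding argument by selecting an intermediate $\gamma_1 \in (\gamma, \gamma^{\gh,p})$ and decomposing
\[
\frac{1}{p\gamma}\,\|x-y\|^p \;=\; \frac{1}{p\gamma_1}\,\|x-y\|^p + \Bigl(\frac{1}{p\gamma} - \frac{1}{p\gamma_1}\Bigr)\|x-y\|^p.
\]
Applying the transport estimate with $\gamma_1$ in place of $\gamma$ furnishes a uniform lower bound $M_1 \in \R$ for $\gh(y) + \frac{1}{p\gamma_1}\|x-y\|^p$, so the full objective is at least $M_1 + c\,\|x-y\|^p$ with $c := (1/\gamma - 1/\gamma_1)/p > 0$, hence coercive in $y$. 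Since the objective is also proper (its effective domain equals $\dom{\gh}$) and lsc, Weierstrass's theorem gives that $\prox{\gh}{\gamma}{p}(x)$ is nonempty, and its inclusion in a bounded sublevel set together with closedness (from lsc) delivers compactness.

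The hard part is the transport estimate: the slack $\varepsilon$ must be tuned so that the deteriorated parameter $\gamma(1+\varepsilon)^{p-1}$ still fits below $\gamma_0$, which in turn must sit strictly inside the threshold interval. This chain of strict inequalities uses $\gamma < \gamma^{\gh,p}$ in an essential way---the conclusion would fail at the endpoint---and it cleanly decouples ``envelope finiteness at a shifted base point'' from ``residual quadratic-type growth that forces coercivity'' in the two-step decomposition above.
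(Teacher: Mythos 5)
The paper does not prove this statement: it is imported as a \emph{Fact}, cited verbatim from \cite[Theorem~3.4]{Kabgani24itsopt}, so there is no in-paper proof to compare against. Your argument is correct and is the standard one for such results --- the ``transport'' estimate $\Vert x-y\Vert^p\geq (1+\varepsilon)^{1-p}\Vert x_0-y\Vert^p-\varepsilon^{1-p}\Vert x_0-x\Vert^p$ promoting prox-boundedness from a reference pair to an arbitrary $(\gamma,x)$ with $\gamma<\gamma^{\gh,p}$, followed by splitting off a residual $c\Vert x-y\Vert^p$ term to get coercivity and then Weierstrass --- which is precisely the $p$-power analogue of the proof of \cite[Theorem~1.25]{Rockafellar09} and almost surely the route taken in the cited source. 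The only point worth making explicit is the existence of the reference pair $(\gamma_0,x_0)$ with $\gamma_0\in(\gamma,\gamma^{\gh,p})$ and $\hfgam{\gh}{p}{\gamma_0}(x_0)>-\infty$: the threshold is a supremum, so one first obtains some $\gamma'>\gamma_0$ admitting a finite envelope at some $x_0$ and then uses monotonicity of the envelope in $\gamma$ to pass down to $\gamma_0$; this is routine but you glossed over it.
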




\section{First-order majorization-minimization with high-order majorant}
\label{sec:first:majormin}

In this section, we explore the construction and characterization of the \textit{high-order majorant} for continuous functions. This concept is instrumental in developing \textit{first-order majorization-minimization methods} that employ a high-order regularization term for objective functions of the form $\gf(x) = f(x) + g(x)$, where $f:\R^n\to\R$ is a continuous function, and $g:\R^n \to \Rinf$ is a proper lsc function.
In the context of the majorization–minimization framework, the term ``first-order'' refers to the use of the gradient (or subgradient) of the objective function in the construction of methods. In contrast, the term ``high-order" in the majorant indicates that the regularization term in the majorant involves a power $p>1$.
The framework presented here generalizes those introduced in \eqref{eq:lipdesc} and \eqref{eq:holderianDescentLemma} for $L$-smooth and weakly smooth functions, respectively.

We begin by formally defining the concept of a \textit{high-order majorant}. 
\begin{definition}[High-order majorant]\label{def:power majorization}
Let $\gh: \R^n\to \R$ be a continuous function. We say that $\gh$ possesses a \textit{high-order majorant} with power $p>1$ on $\R^n$ if there exists a constant $L_p>0$ such that, for all $x, y\in \R^n$,
\begin{equation}\label{eq:firstmaj:gcase}
\gh(y)\leq \gh(x)+\langle \zeta, y-x\rangle +\frac{L_p}{p}\Vert y - x\Vert^p, \qquad \forall \zeta\in \partial \gh(x).
\end{equation}
The class of all such functions is denoted by $\maj{L_p}{p}{\R^n}$.
\end{definition}
Having such a majorant for the continuous component $f$, particularly when $f$ is Fr\'{e}chet differentiable, is crucial for designing methods to solve problem~\eqref{eq:mainproblemcom} using structures such as \eqref{eq:fbsGeneral:p} with a regularization term of degree $p>1$. When $p\in (0, 1)$, the function $y\mapsto \Vert x-y\Vert^p$ is nonconvex for each fixed $x\in\R^n$, which may cause the function $\fgam{\gf}{p}{\gamma}(x)$ defined in \eqref{eq:fbsGeneral:fun} to equal $-\infty$ for all $x\in \R^n$, even in simple cases, such as $f(x)=0$, $g(x)=x$,  $x\in \R$, and any $\gamma>0$. The case $p=1$ introduces nonsmoothness into the regularization term in \eqref{eq:fbsGeneral:p} and \eqref{eq:fbsGeneral:fun}, complicating the majorization-minimization process. Thus, we restrict our analysis to $p>1$. 

The following remark elaborates on Definition~\ref{def:power majorization}, highlighting key properties and distinctions.
\begin{remark}\label{rem:onfom}
\begin{enumerate}[label=(\textbf{\alph*}), font=\normalfont\bfseries, leftmargin=0.7cm]
\item\label{rem:onfom:benin}
Unlike the inequality \eqref{eq:upperbounf for c1,alp} in the H\"{o}lderian descent lemma, the inequality \eqref{eq:firstmaj:gcase} 
does not provide an upper bound $\frac{L_p}{p}\Vert y - x\Vert^p$ 
for 
$\vert\gh(y)- \gh(x) - \langle \zeta, y - x \rangle\vert$ when $\zeta\in \partial\gh(x)$ and $p>1$.
In fact, imposing such a two-sided bound, i.e., for each $x, y\in \R^n$,
\begin{equation}\label{eq:onabsvalfmaj}
\vert\gh(y)- \gh(x) - \langle \zeta, y - x \rangle\vert\leq\frac{L_p}{p}\Vert y - x\Vert^p, \qquad \forall \zeta\in \partial \gh(x),
\end{equation}
would imply that
\[
0\leq \mathop{\bs\lim}\limits_{y\to x}\frac{\vert\gh(y) -\gh(x) - \langle \zeta , y - x\rangle\vert}{\Vert y - x\Vert}\leq\mathop{\bs\lim}\limits_{y\to x}\frac{L_p}{p}\Vert y - x\Vert^{p-1}=0,
\]
which means $\gh$ is Fr\'{e}chet differentiable. Thus, the one-sided nature of \eqref{eq:firstmaj:gcase} 
is essential for the class $\maj{L_p}{p}{\R^n}$ to include nonsmooth functions. See part~\ref{rem:onfom:nsm} for a nonsmooth example.
Moreover, the inequality \eqref{eq:onabsvalfmaj} holds for $p\in (1, 2]$ if and only if $\gh$ has a $(p-1)$-H\"{o}lder continuous gradient \cite[Theorem~4.1]{Berger2020Quality}. For $p>2$, it is even more restrictive, implying that $\gh$ must be an affine function, as discussed in Remark~\ref{rem:onaffine}.

\item If $\gh$ is convex, 
then the subgradient inequality $0\leq \gh(y)- \gh(x) - \langle \zeta, y - x \rangle$ holds
for all $x, y \in \R^n$ and $\zeta \in \partial \gh(x)$. Thus, if $\gh \in \maj{L_p}{p}{\R^n}$ as well, it satisfies \eqref{eq:onabsvalfmaj}, implying that $\gh$ is Fr\'{e}chet differentiable and either has $(p-1)$-H\"{o}lder continuous gradient for $p\in (1,2]$ or is an affine function for $p>2$; see part~\ref{rem:onfom:benin}.

\item \label{rem:onfom:nsm} By Fact \ref{fact:holder:declem}, for $\nu\in (0, 1]$, we have $\mathcal{C}^{1, \nu}_{L_{\nu}}(\R^n)\subseteq \maj{L_p}{p}{\R^n}$ with $p=1+\nu$ and $L_p\leq L_\nu$. 
Moreover, every concave $\gh$ possesses a high-order majorant for any $p> 1$ and $L_p>0$, since 
\begin{equation}\label{eq:concavegrad}
    \gh(y)\leq \gh(x)+\langle \zeta, y-x\rangle, \quad \forall x, y\in \R^n, \forall \zeta\in \partial \gh(x),
\end{equation}
which guarantees \eqref{eq:firstmaj:gcase}.
However, for $p \in (1, 2]$, the class $\maj{L_p}{p}{\R^n}$ strictly contains $\mathcal{C}^{1, p-1}_{L_{p-1}}(\R^n)$,
even when the definition of $\maj{L_p}{p}{\R^n}$ is restricted to Fr\'{e}chet differentiable functions. This is because 
Fr\'{e}chet differentiable concave functions satisfying \eqref{eq:firstmaj:gcase} need not have $(p-1)$-H\"{o}lder continuous gradients. 
Furthermore, the class $\maj{L_p}{p}{\R^n}$ extends beyond the union of $\mathcal{C}^{1, \nu}_{L_{\nu}}(\R^n)$ for $\nu \in (0,1]$ and the class of concave functions. Since $\maj{L_p}{p}{\R^n}$ is closed under addition (see Proposition~\ref{prop:calcul}), functions such as $\gh_1(x) = |x|^{\frac{3}{2}} - |x|$ (nonsmooth) and $\gh_2(x) = |x|^{\frac{3}{2}} - x^4$ (smooth) belong to $\maj{L_{\frac{3}{2}}}{\frac{3}{2}}{\R^n}$, even though they do not lie in $\mathcal{C}^{1, \frac{1}{2}}_{L_{\frac{1}{2}}}(\R^n)$ or the class of concave functions.
\end{enumerate}
\end{remark}

Next, we characterize the class $\maj{L_p}{p}{\R^n}$.
\begin{theorem}[Characterization of high-order majorant]\label{th:parandfmaj}
Let $\gh: \R^n\to \R$ be a continuous function and $p>1$. The following statements are equivalent
\begin{enumerate}[label=(\textbf{\alph*}), font=\normalfont\bfseries, leftmargin=0.7cm]
\item\label{th:parandfmaj:a} $\gh\in \maj{L_p}{p}{\R^n}$ for some $L_p>0$,

\item\label{th:parandfmaj:b} there exists a constant $c_1>0$ such that, for all $x,y\in \R^n$,
\[
\langle \zeta_y - \zeta_x , y- x\rangle\leq c_1\Vert y - x\Vert^p, \qquad \forall \zeta_x\in \partial \gh(x),\forall \zeta_y\in \partial \gh(y),
\]

\item\label{th:parandfmaj:c} there exists a constant $c_2>0$ such that, for all $x,y\in \R^n$ and $\lambda\in [0,1]$,
\[
\lambda \gh(x)+(1-\lambda) \gh(y)\leq \gh(\lambda x+(1-\lambda)y)+c_2\Vert x - y\Vert^p,
\]

\item\label{th:parandfmaj:d} there exists a constant $c_3>0$ such that, for all $x,y\in \R^n$ and $\lambda\in [0,1]$,
\[
\lambda \gh(x)+(1-\lambda) \gh(y)\leq \gh(\lambda x+(1-\lambda)y)+c_3 \lambda(1-\lambda)\Vert x - y\Vert^p,
\]

\item\label{th:parandfmaj:e} there exists a constant $c_4>0$ such that, for all $x,y\in \R^n$ and $\lambda\in [0,1]$,
\[
\lambda \gh(x)+(1-\lambda) \gh(y)\leq \gh(\lambda x+(1-\lambda)y)+c_4~\bs\min\{\lambda, 1-\lambda\}\Vert x - y\Vert^p.
\]
\end{enumerate}
\end{theorem}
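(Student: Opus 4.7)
The plan is to establish a cyclic chain of implications, recognizing that (c), (d), (e) are three variants of a classical $p$-paraconcavity property (distinguished only by their $\lambda$-dependent coefficients), while (a) is its subgradient form (a high-order descent lemma) and (b) the corresponding hypomonotonicity condition. Concretely, I would go (a)$\Rightarrow$(b), then (a)$\Rightarrow$(d)$\Rightarrow$(e)$\Rightarrow$(c), and close the cycle with (c)$\Rightarrow$(a).

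The implication (a)$\Rightarrow$(b) is the standard swap-and-add: write \eqref{eq:firstmaj:gcase} at $(x,y,\zeta_x)$ and at $(y,x,\zeta_y)$ and sum; the function-value terms cancel, leaving $\langle \zeta_y-\zeta_x, y-x\rangle \leq \tfrac{2L_p}{p}\Vert y-x\Vert^p$, so (b) holds with $c_1 = 2L_p/p$.

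For (a)$\Rightarrow$(d), fix $x,y\in\R^n$ and $\lambda\in[0,1]$, and set $z := \lambda x + (1-\lambda)y$. Picking $\zeta\in\partial\gh(z)$ (nonempty on a dense subset of $\R^n$ by continuity of $\gh$; the remaining case follows by approximation) and applying (a) to the pairs $(z,x)$ and $(z,y)$ with this single $\zeta$, the $\lambda$-weighted convex combination cancels the linear part via $\lambda(x-z)+(1-\lambda)(y-z)=0$ and collapses the norm contributions to
\[
\lambda\Vert x-z\Vert^p + (1-\lambda)\Vert y-z\Vert^p = \lambda(1-\lambda)\bigl[(1-\lambda)^{p-1}+\lambda^{p-1}\bigr]\Vert x-y\Vert^p \leq 2\lambda(1-\lambda)\Vert x-y\Vert^p,
\]
delivering (d) with $c_3 = 2L_p/p$. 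The downstream (d)$\Rightarrow$(e)$\Rightarrow$(c) is then immediate by comparing coefficients, since $\lambda(1-\lambda)\leq\min\{\lambda,1-\lambda\}\leq 1$ on $[0,1]$ (a short case analysis at $\lambda\lessgtr 1/2$ using $\min\{\lambda,1-\lambda\}\geq \lambda(1-\lambda)$).

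The main difficulty is the return (c)$\Rightarrow$(a), where the weak constant coefficient in (c) must be upgraded twice. I would first bootstrap (c) to (d) by iterating (c) along a dyadic subdivision of $[x,y]$ -- applying it on $[x,z]$ and $[z,y]$ with $z=(x+y)/2$ and summing the geometric series of contributions from successive refinements -- which produces the missing $\lambda(1-\lambda)$ factor. Having (d), for $\zeta\in\widehat{\partial}\gh(x)$ and $y\in\R^n$ I specialize (d) with $\lambda=1-s$, rearrange it as
\[
\gh(y) - \gh(x) \leq \frac{\gh(x+s(y-x)) - \gh(x)}{s} + c_3(1-s)\Vert x-y\Vert^p,
\]
and let $s\to 0^+$; the $\limsup$ of the right-hand side coincides with $\langle \zeta, y-x\rangle$ because (c) forces $\gh$ to be locally Lipschitz (a classical consequence of $p$-paraconcavity on $\R^n$), so the upper Dini derivative of $\gh$ along $y-x$ is realized by any Fr\'{e}chet subgradient. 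Extension from Fr\'{e}chet to limiting subgradients follows from the defining sequential limits and continuity of $\gh$. The principal obstacle is exactly this last leg -- the bootstrapping (c)$\Rightarrow$(d) together with the upper-Dini/subgradient identification -- where the one-sided, nonsmooth character of \eqref{eq:firstmaj:gcase} is essential and classical $p$-paraconcave-function calculus must be invoked.
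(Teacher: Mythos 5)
Your implication graph does not establish the five-way equivalence: statement (b) appears only as the target of (a)$\Rightarrow$(b) and is never used as a hypothesis, so nothing in your chain shows that (b) implies any of (a), (c), (d), (e). This is not a cosmetic omission. The return implication (b)$\Rightarrow$(c) is the genuinely hard ``integration'' step --- passing from an approximate monotonicity condition on the limiting subdifferential of a merely continuous function back to a pointwise inequality on function values --- and in the paper it is exactly the content of the cited result \cite[Corollary~7.1]{Jourani}, whose proof rests on an approximate mean value inequality for the limiting subdifferential. Under (b) alone you do not yet know that $\gh$ is locally Lipschitz, so none of your other arguments can be recycled to close this loop. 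For context: the paper itself proves only (a)$\Rightarrow$(b) directly and outsources every remaining implication to \cite{Jourani,Rahimi2024}, so your self-contained treatment of the cycle (a)$\Rightarrow$(d)$\Rightarrow$(e)$\Rightarrow$(c)$\Rightarrow$(a) is a genuinely different and more informative route; but as written it proves a four-way equivalence plus one stray arrow into (b).

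Two further points on the (c)$\Rightarrow$(a) leg. First, your justification of the limit step is incorrect as stated: local Lipschitz continuity does not imply that the Dini derivative of $\gh$ at $x$ in the direction $d=y-x$ is ``realized by any Fr\'{e}chet subgradient''; the definition of $\widehat{\partial}\gh(x)$ only yields the lower bound $\liminf_{s\downarrow 0}s^{-1}\left(\gh(x+sd)-\gh(x)\right)\geq\langle\zeta,d\rangle$, which points the wrong way for your inequality (compare $\gh(t)=|t|$ at $0$, which is locally Lipschitz but where the claim fails). What actually saves the argument is paraconcavity itself: applying (d) with $\lambda=\tfrac{1}{2}$ to the pair $x+sd$, $x-sd$ gives $s^{-1}\left(\gh(x+sd)-\gh(x)\right)+s^{-1}\left(\gh(x-sd)-\gh(x)\right)\leq c_3 2^{p-1}s^{p-1}\Vert d\Vert^p\to 0$, and combining this with the Fr\'{e}chet lower bound in both directions $\pm d$ forces the lower Dini derivative to equal $\langle\zeta,d\rangle$. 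You should make this symmetrization explicit rather than appeal to Lipschitzness. Second, the dyadic bootstrapping (c)$\Rightarrow$(d), the density argument for $\widehat{\partial}\gh$ in (a)$\Rightarrow$(d), and the passage from Fr\'{e}chet to limiting subgradients are all sound for $p>1$ (the geometric series $\sum_k 2^{-k(p-1)}$ converges), so those parts are fine once written out in detail.
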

\begin{proof}
The characterization of the class $\maj{L_p}{p}{\R^n}$ leverages properties of \textit{paraconvex} and \textit{paraconcave} functions established in \cite{Jourani} and revisited in \cite{Rahimi2024} (see Remark~\ref{rem:onth:parandfmaj}~\ref{rem:onth:parandfmaj:a}).
Utilizing the inequality \eqref{eq:firstmaj:gcase},  \ref{th:parandfmaj:a} immediately implies \ref{th:parandfmaj:b}.
From \cite[Corollary~7.1]{Jourani}, for a continuous function $\phi:\R^n\to \R$, there exists a constant $c_1>0$ such that, for all $x,y\in \R^n$,
\[
\langle \zeta_y - \zeta_x , y- x\rangle\geq - c_1\Vert y - x\Vert^p, \qquad \forall \zeta_x\in \partial \phi(x),\forall \zeta_y\in \partial \phi(y),
\]
if and only if there exists $c_2>0$ such that, for all $x,y\in \R^n$ and $\lambda \in [0,1]$,
\[
\phi(\lambda x+(1-\lambda)y) \leq \lambda \phi(x)+(1-\lambda) \phi(y) +c_2\Vert x - y\Vert^p.
\]
Setting $\phi = -\gh$ yields the equivalence of \ref{th:parandfmaj:b} and \ref{th:parandfmaj:c}. By a similar argument, the equivalence among \ref{th:parandfmaj:b}, \ref{th:parandfmaj:c}, and \ref{th:parandfmaj:d} is demonstrated in \cite[Corollary~7.1]{Jourani}.
For the equivalence of \ref{th:parandfmaj:c} and \ref{th:parandfmaj:e}, refer to \cite[Proposition~2.1]{Jourani}; see also \cite[Proposition~3.3]{Rahimi2024}. Finally, by \cite[Theorem~3.1]{Jourani}, \ref{th:parandfmaj:c} implies \ref{th:parandfmaj:a}.
\end{proof}
A similar equivalence among statements in Theorem~\ref{th:parandfmaj}~\ref{th:parandfmaj:a},~\ref{th:parandfmaj:b},~and \ref{th:parandfmaj:d} for convex $L$-smooth functions, corresponding to the case $p = 2$, has been demonstrated in \cite[Theorem~2.1.5]{Nesterov2018}.
\begin{remark}\label{rem:onth:parandfmaj}
\begin{enumerate}[label=(\textbf{\alph*}), font=\normalfont\bfseries, leftmargin=0.7cm]
\item\label{rem:onth:parandfmaj:a} The inequality in Theorem~\ref{th:parandfmaj}~\ref{th:parandfmaj:c} characterizes 
the class of \textit{paraconcave} functions, which has received less attention in the literature compared to its counterpart, \textit{paraconvexity}. A function $\gh: \R^n \to \R$ is paraconcave if $-\gh$ is paraconvex, as defined in \cite{daniilidis2005filling,Jourani,Rahimi2024,Rolewicz00,rolewicz2005paraconvex}.
Paraconvex functions have been extensively studied, particularly for $p = 2$, which corresponds to \textit{weak convexity}, a property prevalent in optimization problems
\cite{Bohm21,Goujon2024Learning,Kabgani25itsdeal,Kungurtsev2021zeroth,Liao2024error,Montefusco2013fast,Pougkakiotis2023Zeroth,Yang2019Weakly}.
We note that the weak convexity of $\gh$ is equivalent to the existence of some
$\rho>0$ such that $\gh+\frac{\rho}{2}\Vert \cdot\Vert^2$ is convex. In this case, we also call 
$\gh$, \textit{$\rho$-weakly convex}.

\item If $\gh: \R^n \to \R$ is a continuous function satisfying the inequality in Theorem~\ref{th:parandfmaj}~\ref{th:parandfmaj:c},
 then $\gh$ is locally Lipschitz on $\R^n$, as shown in \cite[Proposition~2.2]{Jourani}. Consequently,  $\partial \gh(x)$ is nonempty for all $x \in \R^n$.
\end{enumerate}
\end{remark}
Let us now provide necessary and sufficient conditions for functions admitting a high-order majorant.
\begin{theorem}[Conditions for the high-order majorant]\label{th:parandfmaj:con}
Let $\gh: \R^n\to \R$ be a continuous function.
\begin{enumerate}[label=(\textbf{\alph*}), font=\normalfont\bfseries, leftmargin=0.7cm]
\item \label{th:parandfmaj:p2} If \eqref{eq:firstmaj:gcase} holds for $p>2$, then $\gh$ is concave.
\item \label{th:parandfmaj:p0} If $\gh$ is concave, then \eqref{eq:firstmaj:gcase} holds for any $p> 1$ and $L_p>0$.
\item \label{th:parandfmaj:hol} $\mathcal{C}^{1, \nu}_{L_{\nu}}(\R^n)\subseteq \maj{L_p}{p}{\R^n}$ for any $\nu\in (0, 1]$, with $p=1+\nu$ and $L_p \leq L_\nu$.

\item  \label{th:parandfmaj:sec} Suppose that $\gh\in \mathcal{C}^{1}(\R^n)$ and, 
for each open convex set $S\subseteq \R^n$ with $0\notin S$, assume $\gh\in \mathcal{C}^{2}(S)$. If there exist $\nu\in (0,1]$ and $c\geq 0$ such that
\begin{equation}\label{eq:seccond}
    \frac{c(1+\nu)}{\Vert x\Vert^{3-\nu}}\left(\Vert x\Vert^2 \mathcal{I} - (1-\nu)xx^T\right)\succeq\nabla^2 \gh(x),
\end{equation}
for all $x\in S$, where $\mathcal{I}$ is the identity matrix, then 
$\gh\in \maj{L_\nu}{1+\nu}{\R^n}$ for some $L_\nu>0$.
\end{enumerate}
\end{theorem}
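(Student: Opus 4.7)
\emph{Plan.} The four assertions split into two essentially immediate items, (b) and (c), a Taylor-type computation (d), and the delicate statement (a).

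Parts (b) and (c) follow at once. For (b), concavity of $\gh$ gives the subgradient inequality \eqref{eq:concavegrad}, and adding the nonnegative term $\tfrac{L_p}{p}\|y-x\|^p$ yields \eqref{eq:firstmaj:gcase} for every $p>1$ and $L_p>0$. For (c), Fact~\ref{fact:holder:declem} provides the sharper two-sided bound \eqref{eq:upperbounf for c1,alp} for $\gh\in\mathcal{C}^{1,\nu}_{L_\nu}(\R^n)$; since $\gh$ is Fr\'echet differentiable, $\partial\gh(x)=\{\nabla\gh(x)\}$, and retaining only the upper estimate with $p=1+\nu$ and $L_p=L_\nu$ is exactly \eqref{eq:firstmaj:gcase}.

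For (d), the first step is to identify the right-hand side of \eqref{eq:seccond} with $\nabla^2(c\|\cdot\|^{1+\nu})(x)$ via a direct Hessian calculation, so the hypothesis reads $\nabla^2(c\|\cdot\|^{1+\nu})(x)-\nabla^2\gh(x)\succeq 0$ on every open convex $S\not\ni 0$. For any $x,y\in\R^n$ with $0\notin[x,y]$, applying the integral form of Taylor's theorem to $\gh$ along $[x,y]$ together with this pointwise Hessian bound yields
\[
\gh(y)-\gh(x)-\langle\nabla\gh(x),y-x\rangle\le c\|y\|^{1+\nu}-c\|x\|^{1+\nu}-c(1+\nu)\|x\|^{\nu-1}\langle x,y-x\rangle.
\]
Because $\|\cdot\|^{1+\nu}$ has $\nu$-H\"older continuous gradient, Fact~\ref{fact:holder:declem} bounds the right-hand side by $\tfrac{cL_0}{1+\nu}\|y-x\|^{1+\nu}$ for some $L_0>0$. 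Setting $L_\nu:=cL_0$ gives \eqref{eq:firstmaj:gcase} on the open dense set $\{(x,y)\,:\,0\notin[x,y]\}$, and continuity of both sides (since $\gh\in\mathcal{C}^1(\R^n)$) extends it to the remaining antiparallel pairs.

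The crux is (a). I would first reformulate the hypothesis via Theorem~\ref{th:parandfmaj}\ref{th:parandfmaj:b}: there exists $c_1>0$ with $\langle\zeta_y-\zeta_x,y-x\rangle\le c_1\|y-x\|^p$ for every $x,y\in\R^n$ and arbitrary $\zeta_x\in\partial\gh(x)$, $\zeta_y\in\partial\gh(y)$. Paraconcavity entails local Lipschitzness (Remark~\ref{rem:onth:parandfmaj}), so $\partial\gh$ is nonempty everywhere. Partitioning $[x,y]$ into $n$ equal pieces $z_k=x+\tfrac{k}{n}(y-x)$ and selecting $\zeta_k\in\partial\gh(z_k)$ with $\zeta_0:=\zeta_x$ and $\zeta_n:=\zeta_y$, the telescoping
\[
\langle\zeta_y-\zeta_x,y-x\rangle=n\sum_{k=0}^{n-1}\bigl\langle\zeta_{k+1}-\zeta_k,z_{k+1}-z_k\bigr\rangle\le \frac{c_1\|y-x\|^p}{n^{p-2}}
\]
holds for all admissible $\zeta_x,\zeta_y$. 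For $p>2$, letting $n\to\infty$ forces $\langle\zeta_y-\zeta_x,y-x\rangle\le 0$, so $-\partial\gh$ is monotone. Restricted to any line, this makes $t\mapsto\gh(x+t(y-x))$ have a nonincreasing (Clarke) derivative on $[0,1]$, hence concave, and concavity along every line upgrades to concavity of $\gh$ on $\R^n$. The step I expect to require the most care is this final passage from subdifferential monotonicity to concavity, since the limiting subdifferential is not automatically closed-convex-valued; I would close it either by lifting to the Clarke subdifferential (legitimate thanks to the local Lipschitzness supplied by paraconcavity) or by tracking constants through the proof of the equivalence \ref{th:parandfmaj:b}$\Leftrightarrow$\ref{th:parandfmaj:c} in \cite[Corollary~7.1]{Jourani} to propagate $c_1\downarrow 0$ to $c_2\downarrow 0$.
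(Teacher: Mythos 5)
Your parts (b) and (c) coincide with the paper's. For (a) and (d) you take genuinely different routes, both essentially viable. In (a), the paper simply translates \eqref{eq:firstmaj:gcase} into the hypomonotonicity inequality for $\partial(-\gh)$ and then cites \cite[Theorem~7.1]{Jourani} and \cite[Proposition~1]{Rolewicz00} to conclude that $-\gh$ is $p$-paraconvex and hence convex for $p>2$; your subdivision/telescoping argument, which drives the paraconcavity modulus to zero at rate $n^{2-p}$ and yields outright monotonicity of $-\partial\gh$, is a more elementary and self-contained replacement for the Rolewicz step. What it buys is transparency about \emph{why} $p>2$ forces concavity; what it costs is that the final passage from monotonicity of the (limiting or Clarke) subdifferential to concavity still requires an external result (e.g.\ Correa--Jofr\'e--Thibault), so you have not actually eliminated all citations --- you have traded one for another. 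Your plan to lift to the Clarke subdifferential is sound: the bilinearity of $\langle\zeta_y-\zeta_x,y-x\rangle$ propagates the inequality to convex hulls, and local Lipschitzness is available via Theorem~\ref{th:parandfmaj} and Remark~\ref{rem:onth:parandfmaj}. In (d), the paper first proves that $\phi=c\Vert\cdot\Vert^{1+\nu}-\gh$ is convex on all of $\R^n$ (extending from half-spaces to the origin by a gradient-monotonicity limit) and then writes $\gh$ as a H\"olderian function plus a concave function, invoking parts (b), (c) and the sum rule; you instead integrate the Hessian bound along each segment avoiding the origin and estimate the resulting remainder by Fact~\ref{fact:holder:declem}.

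Two points need repair. First, in (d) your density/continuity extension to pairs with $0\in[x,y]$ fails for $n=1$: there the set of such pairs has nonempty interior (e.g.\ $x<0<y$), so it is not reached as a limit of ``good'' pairs. The fix is cheap --- argue as the paper does that $\phi$ is convex on each open half-space and, being $\mathcal{C}^1$, globally convex (for $n=1$, a $\mathcal{C}^1$ function convex on $(-\infty,0)$ and on $(0,\infty)$ is convex on $\R$) --- but as written the step would fail. Second, and more cosmetically, your claimed constant in (d) is $L_\nu=cL_0$ with $L_0$ the H\"older constant of $\nabla(\Vert\cdot\Vert^{1+\nu})$; that is consistent with the statement, which only asserts existence of some $L_\nu>0$, so no issue there. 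With the $n=1$ patch and an explicit citation for ``monotone subdifferential implies convexity,'' the proposal is complete.
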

\begin{proof}
The inequality \eqref{eq:firstmaj:gcase} implies that for all $x, y\in \R^n$,
\[
\langle \zeta_y- \zeta_x, y-x \rangle\geq -\frac{2L_{p}}{p}\Vert x - y \Vert ^{p}, \qquad\forall \zeta_x\in \partial (-\gh(x)), \forall \zeta_y\in \partial (-\gh(y)).
\]
By \cite[Theorem 7.1]{Jourani}, $-\gh$ is $p$-paraconvex.
Since $p>2$,  \cite[Proposition~1]{Rolewicz00} yields that $-\gh$ is convex on $\R^n$.  Thus, $\gh$ is concave on $\R^n$ and verifies Assertion~\ref{th:parandfmaj:p2}.
Assertion~\ref{th:parandfmaj:p0} follows from~\eqref{eq:concavegrad} and 
Assertion~\ref{th:parandfmaj:hol} is a direct result of Fact \ref{fact:holder:declem}.
To establish Assertion~\ref{th:parandfmaj:sec},
let $S$ be a nonempty open convex set not containing $\ov{x}=0$.
From \eqref{eq:seccond}, the function $\phi(x):= c\Vert x\Vert^{1+\nu}-\gh(x)$ is convex on $S$ \cite[Theorem 2.1.4]{Nesterov2018}. 
Let $y\in \R^n$ be arbitrary. Choose an open convex set $S$ such that $y\in S$ and  $\ov{x}=0$ is a boundary point of it. Considering the sequence $x^k := \lambda_k y$, where $\lambda\in (0, 1)$ and $\lambda_k\downarrow 0$, we have $x^k\in S$ \cite[Theorem~2.2.2]{bazaraa2006nonlinear}, $x^k\to \ov{x}$, and 
$\langle \nabla \phi(x^k)-\nabla\phi(y), x^k - y\rangle\geq 0$. Taking the limit as $k\to \infty$ yields convexity of $\phi$ on 
$\R^n$.
Hence, $\gh$ is the sum of an H\"{o}lderian function and a concave function. Utilizing Assertions~\ref{th:parandfmaj:p0}~and~\ref{th:parandfmaj:hol} give our desired result.
\end{proof}
As noted in Remark~\ref{rem:onfom}~\ref{rem:onfom:benin}, the inequality \eqref{eq:firstmaj:gcase} offers advantages over \eqref{eq:onabsvalfmaj} for defining the high-order majorant class $\maj{L_p}{p}{\R^n}$. An additional benefit is highlighted in the subsequent remark.
\begin{remark}\label{rem:onaffine}
In Theorem~\ref{th:parandfmaj:con}~\ref{th:parandfmaj:p2}, we established that if \eqref{eq:firstmaj:gcase} holds for $p>2$, then $\gh$ must be concave. Imposing the stronger, two-sided inequality \eqref{eq:onabsvalfmaj} for $p>2$ places a much stricter constraint on the function. Specifically, \eqref{eq:onabsvalfmaj} implies that, for each $x, y\in \R^n$,
\[
\langle \zeta_y- \zeta_x, y-x \rangle\geq -\frac{2L_{p}}{p}\Vert x - y \Vert ^{p}, \qquad\forall \zeta_x\in \partial (-\gh(x)), \forall \zeta_y\in \partial (-\gh(y)),
\]
and
\[
\langle \zeta_y- \zeta_x, y-x \rangle\geq -\frac{2L_{p}}{p}\Vert x - y \Vert ^{p}, \qquad\forall \zeta_x\in \partial \gh(x), \forall \zeta_y\in \partial \gh(y).
\]
By \cite[Theorem 7.1]{Jourani}, both $\gh$ and $-\gh$ are $p$-paraconvex.
Since $p>2$,  \cite[Proposition~1]{Rolewicz00} implies that $\gh$ and $-\gh$ are convex on $\R^n$.  Thus, $\gh$ is an affine function on $\R^n$.
\end{remark}
Here, we present calculus rules for the class $\maj{L_p}{p}{\R^n}$ that enable the construction of new functions possessing the high-order majorant.
\begin{proposition}[Calculus rules]\label{prop:calcul}
Let $p > 1$. The following calculus rules hold for the class $\maj{L_p}{p}{\R^n}$.
\begin{enumerate}[label=(\textbf{\alph*}), font=\normalfont\bfseries, leftmargin=0.7cm]
\item \label{prop:calcul:mul} If $\gh\in \maj{L_p}{p}{\R^n}$ and $\alpha>0$, then $\alpha \gh(x)\in \maj{\widehat{L}_p}{p}{\R^n}$ with 
$\widehat{L}_p\leq \alpha L_p$.

\item \label{prop:calcul:sum} If $\gh_i\in \maj{L^i_p}{p}{\R^n}$ for $i=1, \ldots, N$, then the sum $\gh(x) = \sum_{i=1}^{N}\gh_i(x)\in \maj{L_p}{p}{\R^n}$ with 
$L_p\leq \sum_{i=1}^{N}L^i_p$.

    \item \label{prop:calcul:a} If $\gh\in \maj{L_p}{p}{\R}$ is non-decreasing and $\phi: \R^n \to \R$ is concave and $\nu$-H\"{o}lder continuous with $\nu \in (0,1]$, i.e., there exists $L_\nu > 0$ such that $\vert\phi(x) - \phi(y)\vert \leq L_\nu \| x - y \|^\nu$ for all $x, y \in \R^n$, then the composition $\vartheta(x) = \gh(\phi(x))$ belongs to $\maj{L}{p\nu}{\R^n}$ for some $L>0$.

\item \label{prop:calcul:b} If $\gh: \R^n \to \R$ is concave and $L$-Lipschitz, and $\Phi: \R^m \to \R^n$ has a $\nu$-H\"{o}lder continuous Jacobian $J_\Phi$, i.e., there exists $L_\nu > 0$ such that
    \[
\Vert \Phi(y) - \Phi(x) -   J_\Phi(x) (y-x)\Vert \leq \frac{L_\nu}{1+\nu}\Vert x - y\Vert^{1+\nu},\qquad \forall x, y\in \R^m,
    \]
    then the composition $\vartheta(x) = \gh(\Phi(x))$ belongs to $\maj{\widehat{L}}{1 + \nu}{\R^n}$ with $\widehat{L} \leq L L_\nu$.

\end{enumerate}
\end{proposition}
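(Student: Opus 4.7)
My proof plan treats the four rules separately, relying on two ingredients already available: the defining inequality \eqref{eq:firstmaj:gcase} combined with elementary subdifferential calculus, and the paraconcavity characterization of $\maj{L_p}{p}{\R^n}$ provided in Theorem~\ref{th:parandfmaj}. For part \ref{prop:calcul:mul}, I would simply multiply \eqref{eq:firstmaj:gcase} by $\alpha>0$ and use the positive homogeneity $\partial(\alpha\gh)(x)=\alpha\partial\gh(x)$ to recast the resulting inequality into the format required by Definition~\ref{def:power majorization}; this yields $\widehat{L}_p\leq\alpha L_p$ without further work.

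For part \ref{prop:calcul:sum}, I would first invoke Remark~\ref{rem:onth:parandfmaj}~\ref{rem:onth:parandfmaj:a}, which guarantees that each $\gh_i$ is locally Lipschitz on $\R^n$; under this regularity, the Mordukhovich limiting subdifferential obeys the sum rule $\partial\gh(x)\subseteq\partial\gh_1(x)+\cdots+\partial\gh_N(x)$. Hence any $\zeta\in\partial\gh(x)$ admits a decomposition $\zeta=\sum_{i=1}^N\zeta_i$ with $\zeta_i\in\partial\gh_i(x)$. Writing \eqref{eq:firstmaj:gcase} for each $\gh_i$ at this $\zeta_i$ and summing the $N$ inequalities delivers $\gh(y)\leq\gh(x)+\langle\zeta,y-x\rangle+\frac{1}{p}\bigl(\sum_{i=1}^N L_p^i\bigr)\Vert y-x\Vert^p$, as claimed.

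For part \ref{prop:calcul:a}, I would bypass subdifferentials altogether and reason through the paraconcavity characterization Theorem~\ref{th:parandfmaj}~\ref{th:parandfmaj:c}. From $\gh\in\maj{L_p}{p}{\R}$ there exists $c_2>0$ with $\lambda\gh(a)+(1-\lambda)\gh(b)\leq\gh(\lambda a+(1-\lambda)b)+c_2\vert a-b\vert^p$ for all $a,b\in\R$ and $\lambda\in[0,1]$. Substituting $a=\phi(x)$, $b=\phi(y)$, then combining the concavity of $\phi$ (so that $\lambda\phi(x)+(1-\lambda)\phi(y)\leq\phi(\lambda x+(1-\lambda)y)$) with the monotonicity of $\gh$ yields $\gh(\lambda\phi(x)+(1-\lambda)\phi(y))\leq\vartheta(\lambda x+(1-\lambda)y)$, while the $\nu$-H\"older bound on $\phi$ produces $\vert\phi(x)-\phi(y)\vert^p\leq L_\nu^p\Vert x-y\Vert^{p\nu}$. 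Altogether, $\vartheta$ satisfies the paraconcavity inequality with exponent $p\nu$, and the implication \ref{th:parandfmaj:c}$\Rightarrow$\ref{th:parandfmaj:a} of Theorem~\ref{th:parandfmaj} closes the argument.

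For part \ref{prop:calcul:b}, I would deploy the chain rule for the limiting subdifferential: the H\"older continuity of $J_\Phi$ implies that $\Phi$ is continuously differentiable, while $\gh$ is $L$-Lipschitz, so $\partial\vartheta(x)\subseteq J_\Phi(x)^T\partial\gh(\Phi(x))$. Any $\zeta\in\partial\vartheta(x)$ therefore has the form $\zeta=J_\Phi(x)^T\eta$ with $\eta\in\partial\gh(\Phi(x))$ and $\Vert\eta\Vert\leq L$ by the $L$-Lipschitz continuity of $\gh$. Using \eqref{eq:concavegrad}, $\vartheta(y)\leq\vartheta(x)+\langle\eta,\Phi(y)-\Phi(x)\rangle$; splitting $\langle\eta,\Phi(y)-\Phi(x)\rangle=\langle\zeta,y-x\rangle+\langle\eta,\Phi(y)-\Phi(x)-J_\Phi(x)(y-x)\rangle$ and combining Cauchy--Schwarz with the H\"older Jacobian hypothesis yields the majorant inequality with $\widehat{L}\leq LL_\nu$. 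The only non-routine points in the whole argument are the qualification conditions for the sum rule in \ref{prop:calcul:sum} and the chain rule in \ref{prop:calcul:b}; both reduce to the local Lipschitz regularity of the inner ingredients, which is why part \ref{prop:calcul:sum} leans on Remark~\ref{rem:onth:parandfmaj}~\ref{rem:onth:parandfmaj:a} and part \ref{prop:calcul:b} leans on the explicit $L$-Lipschitz hypothesis on $\gh$.
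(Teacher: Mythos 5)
Your proposal is correct and follows essentially the same route as the paper: parts \ref{prop:calcul:mul} and \ref{prop:calcul:sum} are left as ``straightforward'' there, and your scaling/sum-rule arguments are exactly the intended ones, while for part \ref{prop:calcul:a} you use the same paraconcavity characterization of Theorem~\ref{th:parandfmaj}~\ref{th:parandfmaj:c} combined with concavity, monotonicity, and the H\"older bound, and for part \ref{prop:calcul:b} the same chain-rule decomposition $\zeta=J_\Phi(x)^T\eta$ followed by \eqref{eq:concavegrad} and Cauchy--Schwarz. The only additions you make are the explicit qualification conditions (local Lipschitzness from Remark~\ref{rem:onth:parandfmaj} for the sum rule, $L$-Lipschitzness of $\gh$ for the chain rule), which the paper implicitly assumes.
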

\begin{proof}
 The proof of Assertions~\ref{prop:calcul:mul}~and~\ref{prop:calcul:sum} is straightforward.
For Assertion~\ref{prop:calcul:a}, fix any $x, y \in \R^n$ and $\lambda \in (0,1)$. By Theorem~\ref{th:parandfmaj}, there exists $c>0$ such that
\begin{align*}
\lambda \psi(\phi(x)) + (1 - \lambda) \psi(\phi(y))
&\leq \psi\left(\lambda \phi(x) + (1 - \lambda) \phi(y)\right) + c\vert\phi(x) - \phi(y) \vert^p \\
&\leq \psi\left( \phi(\lambda x + (1 - \lambda) y) \right) + c L_\nu^p \Vert x-y\Vert^{p\nu},
\end{align*}
which proves the claim. To prove Assertion~\ref{prop:calcul:b}, assume that $x, y\in \R^n$ and $\zeta\in \partial\vartheta(x)$ are arbitrary.
Hence, there exists some $\eta\in \partial\gh(\Phi(x))$ such that $\zeta = J_\Phi(x)^T\eta$.
From the inequality, 
\eqref{eq:concavegrad},
\begin{align*}
\vartheta(y) =  \gh(\Phi(y))&\leq  \gh(\Phi(x))+\langle\eta, \Phi(y) - \Phi(x)\rangle+\langle \zeta, y-x\rangle-\langle\zeta, y-x\rangle
\\&= \vartheta(x)+\langle\eta, \Phi(y) - \Phi(x)-J_\Phi(x)(y-x)\rangle+\langle \zeta, y-x\rangle
\\&\leq \vartheta(x)+\langle \zeta, y-x\rangle +\frac{LL_\nu}{1+\nu}\Vert x - y\Vert^{1+\nu},
\end{align*}
ensuring $\vartheta(x)\in \maj{\widehat{L}}{1 + \nu}{\R^n}$ with $\widehat{L} \leq L L_\nu$.
\end{proof}

In the following subsection, we present several practical examples formulated as problem \eqref{eq:mainproblemcom}, where the objective $\gf(x) = f(x) + g(x)$ has a Fr\'{e}chet differentiable component $f$ admitting a high-order majorant.

\subsection{{\bf High-order descent lemma}}
\label{subsec:HODL}

Thus far, our discussion of the high-order majorant has not required differentiability. We now turn to the case where the function $f$ in problem \eqref{eq:mainproblemcom} is smooth, which plays a central role in the forward–backward methods developed in later sections. This motivates the following definition.

\begin{definition}[High-order descent lemma]
A Fr\'{e}chet differentiable function $f:\R^n\to \R$ is said to satisfy the \textit{high-order descent lemma} with power $p>1$ and constant $L_p > 0$ if it belongs to the class $\maj{L_p}{p}{\R^n}$, i.e.,
\begin{equation}\label{eq:hdlsm}
 \gh(y)\leq \gh(x)+\langle \nabla \gh(x), y-x\rangle +\frac{L_p}{p}\Vert y - x\Vert^p,\qquad \forall x, y\in \R^n.
 \end{equation}
\end{definition}

If $f:\R^n\to \R$ is Fr\'{e}chet differentiable and satisfies the high-order descent lemma with power $p>1$,
 then the composite function $\gf(x) = f(x)+g(x)$ is majorized by the surrogate function $\mathcal{M}(x,y)$ given by
\begin{equation}\label{eq:majforcom}
\gf(y)\leq \mathcal{M}(x,y): = f(x)+\langle \nabla f(x), y-x\rangle+g(y) +\frac{L_p}{p}\Vert y - x\Vert^p,\qquad \forall x, y\in \R^n.
\end{equation}
The majorant $\mathcal{M}(x,y)$ is central to the development of the \textit{first-order majorization-minimization} methods 
with high-order regularization, introduced in Section~\ref{sec:HiFBEMET}. 

Next, let us present several classes of optimization problems of the form \eqref{eq:mainproblemcom} in which the smooth component $f$ satisfies the high-order descent lemma, thereby demonstrating the broad applicability of our framework.

\subsubsection{{\bf Composite optimization under relative smoothness}}

The concept of \textit{relative smoothness} is fundamental to a class of algorithms known as Bregman proximal methods \cite{ahookhosh2019accelerated,bauschke2017descent,chen1993convergence,dragomir2022optimal,lu2018relatively,takahashi2022new,takahashi2025approximate}. These methods are particularly valuable for problems where the smooth component $f$ in \eqref{eq:mainproblemcom} does not possess a Lipschitz or H\"{o}lder continuous gradient. The relative smoothness property, also referred to as the \textit{Lipschitz-like/convexity condition} \cite{bauschke2017descent}, is the cornerstone of the Bregman descent lemma \eqref{eq:bregmanDescentLemma}.
While Bregman methods provide a powerful tool for such problems, we demonstrate that these problems can also be addressed within our proposed framework. We show that any optimization problem of the form \eqref{eq:mainproblemcom} with a relatively smooth function $f$ can be reformulated in such a way that its new smooth component satisfies the high-order descent lemma, i.e., it belongs to the class $\maj{L_p}{p}{\R^n}$ for some $p > 1$.

We begin by formally recalling the definition of a relatively smooth function.
\begin{definition}[Relative smoothness]\label{def:relsmo}
Let $\gh, h:\R^n \to \R$ be Fr\'{e}chet differentiable functions, with $h$ convex. The function $\gh$ is said to be
\textit{relative smooth relative to $h$} if there exists a constant $L> 0$ such that the function $x\mapsto Lh(x)-\gh(x)$ is convex, i.e., this function is called \textit{$L$-smooth} related to $h$.
\end{definition}
Now, consider an optimization problem of the form \eqref{eq:mainproblemcom} where $f$ is $L$-smooth related to a convex function $h$. 
The original problem can be rewritten as
\begin{equation}\label{eq:rel}
{\mathop {\mathrm{\bs\min}}\limits_{x\in \R^n}}\ \gf (x) := -(Lh(x) -f(x)) + \left(g(x)+Lh(x)\right).
\end{equation}
By defining $\widetilde{f}:=-(Lh(x) -f(x))$, Theorem~\ref{th:parandfmaj:con}~\ref{th:parandfmaj:b} implies $\widetilde{f}\in \maj{L_p}{p}{\R^n}$ for any $p>1$ and $L_p>0$ as $\widetilde{f}$ is concave. With the new nonsmooth part defined as $\widetilde{g}:= g(x)+Lh(x)$, the problem is now in the form
\begin{equation}\label{eq:rel2}
{\mathop {\mathrm{\bs\min}}\limits_{x\in \R^n}}\ \gf (x) := \widetilde{f}(x)+ \widetilde{g}(x),
\end{equation}
where $\widetilde{f}$ satisfies the high-order descent lemma. This reformulated problem is amenable to the methods developed in this paper.

In the following example, we illustrate the advantage of this approach.
\begin{example}[Matrix factorization]\label{ex:relsmooth:pp}
Matrix factorization is a fundamental technique in machine learning, data mining, and signal processing, where the goal is to approximate a given data matrix $X\in\R^{m\times n}$ by a product of two low-rank matrices. 
Let the target rank $r$ satisfy $r \ll \bs\min\{m,n\}$. We then consider the optimization problem
\begin{equation}\label{eq:mainp:ex:relsmooth:pp}
{\mathop {\mathrm{\bs\min}}\limits_{U\in\R^{m\times r},\;V\in\R^{n\times r}}}\; \gf(U,V) \;=\; f(U,V) + g(U,V),
\end{equation}
where 
$f(U,V)=\frac{1}{2}\Vert X- U V^\top\Vert_F^2$ and $\Vert\cdot\Vert_F$ denotes the Frobenius norm.  
The term $f(U,V)$ measures the reconstruction error between $X$ and its approximation $U V^\top$, while $g(U,V)$ introduces structural constraints or regularization.
 For instance, for the \textit{nonnegative matrix factorization} (NMF) one imposes nonnegativity constraints
\[
 g(U,V) := \iota_{\{U\ge 0\}}(U) + \iota_{\{V\ge 0\}}(V),
\]
where $\iota_C$ is the indicator of a set $C$.
As another example, for the \textit{regularized NMF}, one adds smooth or nonsmooth penalties as
\[
 g(U,V) := \iota_{\{U\ge 0\}}(U) + \iota_{\{V\ge 0\}}(V)
 + \gh_1(U)+ \gh_2(V),
\]
where $\gh_1$ and $\gh_2$ are regularizers promoting specific structures (e.g., sparsity).

For problem \eqref{eq:mainp:ex:relsmooth:pp}, the gradient of $f$ is not \textit{globally} Lipschitz on $\R^{mr+nr}$. Thus, the classical proximal gradient or forward-backward algorithms, relying on a global Lipschitz constant, cannot be applied directly.

Consider the convex kernel
\begin{equation}\label{eq:ex:relsmooth:pp:1}
 h(U,V) := \frac{a}{4}\left(\Vert U\Vert_F^2 + \Vert V\Vert_F^2\right)^2+
 \frac{b}{2}\left(\Vert U\Vert_F^2 + \Vert V\Vert_F^2\right),\qquad a>0,\;b>0.
\end{equation}
Then $f$ is $1$-smooth related to $h$ provided that $a\geq 3$ and $b\geq \Vert X\Vert_F$ \cite[Proposition~2.1]{Mukkamala2019Beyond}.
Consequently, the function 
$h-f$ is convex, and problem \eqref{eq2:mainp:ex:relsmooth:pp} can be equivalently expressed as
\begin{equation}\label{eq2:mainp:ex:relsmooth:pp}
{\mathop {\mathrm{\bs\min}}\limits_{U\in\R^{m\times r},\;V\in\R^{n\times r}}}\; \gf(U,V) \;=\; -(h(U,V)-f(U,V)) + \left(g(U,V)+h(U,V)\right).
\end{equation}
Since $\widetilde{f}(U,V)=-(h(U,V)-f(U,V))$ is concave, 
$\widetilde{f}\in \maj{L_p}{p}{\R^{m\times r}\times \R^{n\times r}}$ for any $p>1$ and $L_p>0$. 
Note that even with this reformulation, the gradient of $\widetilde{f}$ is not globally Lipschitz on $\R^{mr+nr}$, 
and therefore, a classical proximal gradient method with a global Lipschitz step-size cannot be applied to solve 
\eqref{eq2:mainp:ex:relsmooth:pp}. 
Nevertheless, the proposed framework in this paper enables the construction of a \textit{high-order majorant} for $\widetilde{f}$ 
with an arbitrary order $p>1$. 
This flexibility allows the development of iterative schemes of the form \eqref{eq:fbsGeneral:p}, 
where, in particular, choosing $p=2$ yields a practical algorithm for solving 
\eqref{eq:mainp:ex:relsmooth:pp}; see Subsection~\ref{subsec:RegularizedNMF}.
\end{example}
\begin{remark}
Let in Example~\ref{ex:relsmooth:pp}, 
 $\gh_1$ be $\rho_1$-weakly convex, $\gh_2$ be $\rho_2$-weakly convex,
$\widehat{\rho}:=\bs\max\{\rho_1, \rho_2\}$, and suppose
$b\geq \widehat{\rho}$.
Setting $H(U,V):=\frac{b}{2}\left(\Vert U\Vert_F^2 + \Vert V\Vert_F^2\right)$,
both
$g(U,V)+ H(U,V)$ and $h(U,V)-H(U,V)$ are convex.
Hence $g(U,V)+h(U,V) =\left(g(U,V)+ H(U,V)\right)+\left(h(U,V)-H(U,V)\right)$ is convex.
Choose $a\geq 3$ and $b\geq\bs\max\{\Vert X\Vert_F, \widehat{\rho}\}$. Then, $h-f$ and $g+h$ are convex. Hence, \eqref{eq2:mainp:ex:relsmooth:pp} can be rewritten as
\begin{equation}\label{eq2:mainp:ex:relsmooth:pp}
{\mathop {\mathrm{\bs\min}}\limits_{U\in\R^{m\times r},\;V\in\R^{n\times r}}}\; \gf(U,V) \;=\; -(h(U,V)-f(U,V)) + \left(g(U,V)+h(U,V)\right).
\end{equation}
Since $\widetilde{f}(U,V)=-(h(U,V)-f(U,V))$ is concave, 
$\widetilde{f}\in \maj{L_p}{p}{\R^{m\times r}\times \R^{n\times r}}$ for any $p>1$ and $L_p>0$. Moreover, $\widetilde{g}(U,V)=-(g(U,V)+h(U,V))$ is proper, lsc, and convex. Hence, its proximal mapping
$\prox{\widetilde{g}}{\gamma}{p}$ is single-valued for every $\gamma>0$ \cite[Proposition 12.15]{Bauschke17}.
\end{remark}
\begin{remark}
As discussed in Section~\ref{intro}, forward–backward schemes of the form \eqref{eq:fbsGeneral}
that use $\zeta(y,x^k)=D_h(y,x^k)$ typically require a step-size
$\gamma\in(0,L^{-1})$, where $L$ is the relative smoothness constant of $f$ with respect to a kernel $h$.
In the high–order setting, this becomes the condition
$\gamma\in(0,L_p^{-1})$ when $f\in\maj{L_p}{p}{\R^n}$; see, e.g., Algorithm~\ref{alg:inexact}.
After the reformulation \eqref{eq:rel2}, however, the step-size constraint no longer depends on an
\textit{a priori} global constant $L$, since $\widetilde{f}\in\maj{L_p}{p}{\R^n}$ for any $L_p>0$.
Hence, by choosing a smaller $L_p$, the admissible range for $\gamma$ widens; see Subsection~\ref{subsec:RegularizedNMF} for this advantage.
\end{remark}

\subsubsection{{\bf Constrained optimization}}
A constrained optimization problem of the form $\bs\min_{x\in C} f(x)$, where $f:\R^n\to\R$ is Fr\'{e}chet differentiable and $C\subseteq\R^n$ is nonempty and closed, can be reformulated as the unconstrained composite problem
\begin{equation}\label{eq:optimlogexp:conc2}
 {\mathop {\mathrm{\bs\min}}\limits_{x\in \R^n}}\ \gf (x) :=f(x) + \bs{\rm Ind}_C(x),
\end{equation}
where $\bs{\rm Ind}_C$ is the indicator function of the set $C$.
This fits the structure of \eqref{eq:mainproblemcom} with the nonsmooth part $g(x) = \bs{\rm Ind}_C(x)$.
The problem is amenable to our framework whenever the smooth function $f$ satisfies the high-order descent lemma. This occurs in several important scenarios, including the case when $f$ has a H\"{o}lder continuous gradient or, more generally, when $f$ is $p$-paraconcave for $p\in(1,2]$; see, e.g., \cite{Cartis2017Worst,Vinod2022Constrained,yashtini2016global}.
Furthermore, when $f$ is concave, it satisfies the descent lemma for any $p>1$ and $L_p>0$. This case is central to concave minimization \cite{delpia2022proximity,Konno2001,mangasarian1996machine,Mangasarian2007Absolute,rinaldi2010concave,yu2023strong}.

A key advantage of our framework is its flexibility: the exponent $p$ in regularization can be tailored to the properties of $f$. For instance, we may set $p=1+\nu$ if $f \in \mathcal{C}^{1, \nu}_{L_{\nu}}(\R^n)$, or choose any $p>1$ if $f$ is concave. The following example illustrates a practical instance that fits this structure.
\begin{example}[Sparsity maximization via concave minimization]\label{ex:constprob}
Consider the problem of finding the sparsest point in a polyhedron $\mathcal{P} \subseteq \R^n$ as
\begin{equation}\label{eq:l0prob}
    {\mathop {\mathrm{\bs\min}}\limits_{x\in \R^n}}\ \Vert x\Vert_0\qquad\text{subject to}\qquad x\in \mathcal{P},
\end{equation}
where $\Vert x\Vert_0$ denotes  the number of nonzero components of $x$. This NP-hard combinatorial problem appears in machine learning, pattern recognition, and signal processing \cite{rinaldi2010concave}. 
A common strategy approximates the discontinuous $\ell_0$-norm by a smooth concave surrogate, leading to
\begin{equation}\label{eq:l0prob:conc}
{\mathop {\mathrm{\bs\min}}\limits_{x, y\in \R^n}}\ \gh(x,y):=\sum_{i=1}^n(1-e^{-\alpha y_i})\qquad\text{subject to}\qquad x\in \mathcal{P},\ \ \ -y_i\leq x_i\leq y_i,\ i=1, \ldots, n,
\end{equation}
where $\alpha > 0$ controls the quality of the approximation. The function $\gh$ is continuously differentiable and, more importantly, concave.
Hence, $\gh(x,y)\in \maj{L_p}{p}{\R^{2n}}$ for any $p>1$ and $L_p>0$. Consequently, this concave approximation of a hard combinatorial problem is directly amenable to the high-order forward-backward methods developed in Section~\ref{sec:HiFBEMET}.
\end{example}

\subsubsection{{\bf Difference of functions optimization}}
Optimization problems whose objective is expressed as the difference of two functions, particularly with convex components, are widely studied \cite{AragonArtacho2018Accelerating,deOliveira2020ABC,HiriartUrruty1985dc}.
We consider this class within our framework. Let $f:\R^n\to \R$ be Fr\'{e}chet differentiable and $g:\R^n\to \Rinf$ be proper and lsc. Consider
\begin{equation}\label{eq:optimlogexp:dc}
 {\mathop {\mathrm{\bs\min}}\limits_{x\in \R^n}}\ \gf (x) := -f(x) + g(x).
\end{equation}
This fits \eqref{eq:mainproblemcom} by identifying the smooth part with $-f(x)$ and the nonsmooth part with $g(x)$. The key requirement for our methods is that the smooth component $-f(x)$ satisfies the high-order descent lemma. We show that several common assumptions on $f$ (such as convexity, weak convexity, or relative smoothness) ensure this:

\begin{description}[wide, labelwidth=!, labelindent=0pt]
    \item[{\bf(i)}] {\bf  Difference of convex (DC) programming.}
    If both $f$ and $g$ are convex, then \eqref{eq:optimlogexp:dc} is a classical DC program, and the smooth component $-f(x)$ is concave. By Theorem~\ref{th:parandfmaj:con}~\ref{th:parandfmaj:p0}, any concave function belongs to $\maj{L_p}{p}{\R^n}$ for all $p>1$ and all $L_p > 0$. As such, the objective possesses a high-order majorant.

    \item[{\bf(ii)}] {\bf  Difference with a weakly convex function.} Suppose $f$ is weakly convex.
    Then $-f(x)$ is paraconcave with $p=2$ \cite{Jourani,Rahimi2024,Rolewicz00}. Consequently, $-f \in \maj{L_2}{2}{\R^n}$ for some $L_2 > 0$, and the problem again fits our framework. This case is particularly relevant in recent works where $f$ is weakly convex and $g$ need not be convex \cite{AragonArtacho2024Coderivative,ksenia2024Minimizing,deOliveira2025Progressive}.

     \item[{\bf(iii)}] {\bf  Difference with a relatively smooth function.} Suppose $f$ is relatively smooth with respect to a convex kernel $h$; meaning that, there exists $L>0$ such that $\gh(x):= Lh(x) - f(x)$ is convex. Then we may rewrite \eqref{eq:optimlogexp:dc} as
    \begin{equation}\label{eq:optimlogexp:dc:rg}
     {\mathop {\mathrm{\bs\min}}\limits_{x\in \R^n}}\ \gf (x) :=g(x)+ \gh(x) - Lh(x).
    \end{equation}
   Identifying the smooth part as $\widetilde{f}(x):= -Lh(x)$ and the nonsmooth part as $\widetilde{g}(x):= g(x) + \gh(x)$ results in the desired structure. Since $h$ is convex and $L>0$, the function $\widetilde f$ is concave and thus belongs to $\maj{L_p}{p}{\R^n}$ for any $p>1$ and $L_p> 0$.
\end{description}

Let us provide a practical example of this reformulation strategy.

\begin{example}[Generalized phase retrieval and DC decomposition]\label{ex:phaseret}  
A generalized version of the phase retrieval problem aims to recover a signal $x \in \R^n$ from nonlinear and nonconvex measurements. Given measurement vectors $a_i \in \R^n$ and measurements $b_i \geq 0$, the goal is to find an $x$ such that
\begin{equation}\label{eq:P_ret_1}
\theta(\langle a_i, x \rangle) \approx b_i, \qquad i=1, \ldots, m,
\end{equation}
where $\theta: \R \to [0, +\infty)$ is a coercive, convex, and twice continuously differentiable function \cite{huang2021dc}. The standard phase retrieval problem corresponds to the special case where $\theta(t) = t^2$.
A least-squares approach, augmented with a regularizer $\gh(x)$, leads to the nonconvex optimization problem
\begin{equation}\label{eq:P_ret_2}
{\mathop {\mathrm{\bs\min}}\limits_{x\in \R^n}}\ \gf (x) :=  \sum _{i=1}^m\left( \theta(\langle a_i, x \rangle) - b_i \right) ^2 +\lambda \gh(x),
\end{equation}
where $\lambda > 0$ is a regularization parameter \cite{bolte2018first,takahashi2022new}.
The data fidelity term, $H(x) := \sum_{i=1}^m (\theta(\langle a_i, x \rangle) - b_i)^2$, is nonconvex. By expanding the square, $H(x)$ can be expressed as the difference of two convex functions, i.e., $H(x) = H_1(x) - H_2(x)$, where
$H_1(x) = \sum_{i=1}^m \left( \theta(\langle a_i, x \rangle)^2 + b_i^2 \right)$ and 
$H_2(x)= 2 \sum_{i=1}^m b_i \theta(\langle a_i, x \rangle)$.
We can fit the problem \eqref{eq:P_ret_2} into our framework by identifying the components as
$f(x) := -H_2(x)$ and $g(x) := H_1(x) + \lambda \gh(x)$.
Since $H_2(x)$ is convex, $f(x)$ is concave. Consequently, $f(x)$ satisfies the high-order descent lemma and belongs to $\maj{L_p}{p}{\R^n}$ for any $p > 1$ and any $L_p > 0$. Furthermore, $g$ is proper and lsc if the regularizer $\gh(x)$ is.
This decomposition recasts the generalized phase retrieval problem into the form \eqref{eq:mainproblemcom}, where the smooth component is concave. Hence, the problem is directly accessible by the high-order forward-backward methods developed in this paper.
\end{example}

\subsubsection{{\bf H\"{o}lderian optimization problems}}
For a function $f\in \mathcal{C}^{1, \nu}_{L_{\nu}}(\R^n)$ with $\nu\in (0, 1]$ and $L_\nu>0$,
the high-order descent lemma holds with $p=1+\nu$. Consequently, for any proper lsc function $g:\R^n\to\Rinf$, the composite objective $\gf(x):= f(x)+g(x)$ admits a majorant of the form \eqref{eq:majforcom}
which aligns with the structure of \eqref{eq:mainproblemcom} and is compatible with the optimization methods proposed in this paper.
This class of composite optimization problems has attracted significant attention in recent years due to its versatility and applicability
\cite{Berger2020Quality,bolte2023backtrack,Cartis2017Worst,Nesterov15univ,yashtini2016global}.

To illustrate the applicability of our framework, we consider the case of a regularized linear inverse problem, a common instance of H\"{o}lderian optimization. 
\begin{example}[Regularized linear inverse problem]\label{ex:RLInv}
A prototypical example of H\"{o}lderian optimization is the regularized linear inverse problem, often arising in signal reconstruction. Here, the goal is to recover a sparse signal $x \in \R^n$ from noisy measurements $b \in \R^m$, 
modeled as $Ax = b + \nu$, where $A \in \R^{m \times n}$ and $\nu \in \R^m$ represent noise. 
This can be formulated as
\begin{equation}\label{eq:lininv:gform}
    {\mathop {\mathrm{\bs\min}}\limits_{x\in \R^n}}\  \frac{1}{q} \Vert Ax - b\Vert^q_q +  \lambda R(x),
\end{equation}
where $q \in (1, 2]$, $R: \R^n \to \Rinf$ is a proper lsc regularizer, and $\lambda > 0$ is a regularization parameter. The function 
$x\mapsto \frac{1}{q}\Vert x\Vert_q^q=\sum_{i=1}^n|x_i|^q$
belongs to $\maj{L_p}{p}{\R^n}$ with $p=q$
\cite[Lemma~3.1]{bourkhissi2025convergence}. 
Thus, the smooth component $f(x):=\frac{1}{q} \Vert Ax - b\Vert^q_q$ also belongs to $\maj{L_p}{p}{\R^n}$ with $p=q$.

When $p = 2$, this reduces to least-squares regression, which is well-suited for Gaussian noise. For $p < 2$; however, the formulation is more robust to outliers, making it particularly effective under heavy-tailed noise distributions such as Laplace noise \cite{chierichetti2017algorithms,li2023ell_p}. Hence, these optimization problems conform to the structure of \eqref{eq:mainproblemcom}, with the smooth component satisfying the high-order descent lemma, 
and are directly amenable to the high-order forward-backward methods developed in this paper.
\end{example}



\section{High-order forward-backward envelope}\label{sec:hifbe}

In this section, we introduce the \textit{high-order forward-backward splitting mapping} (HiFBS) and the \textit{high-order forward-backward envelope} (HiFBE), which are central to our analysis. We establish essential properties needed to develop and analyze the forward–backward methods proposed in Section~\ref{sec:HiFBEMET}.
Further properties, such as conditions for the differentiability of HiFBE, are beyond the scope of this study and will be explored in future work.

We are at the point of formally defining the key concepts of this paper.
\begin{definition}[High-order forward-backward splitting mapping and envelope]\label{def:HFBE}
Let $p> 1$, $\gamma>0$, $f:\R^n\to \R$ be Fr\'{e}chet differentiable, and $g:\R^n\to \Rinf$ be a proper lsc function.
The \textit{high-order forward-backward splitting mapping} (HiFBS) of $\gf(x):=f(x)+g(x)$ with parameter $\gamma$, $ \Tprox{\gf}{\gamma}{p}: \R^n \rightrightarrows \R^n$, is defined as
\begin{equation}\label{HFBM}
\Tprox{\gf}{\gamma}{p} (x):=\argmint{y\in \R^n}\left\{f(x)+\langle \nabla f(x) , y - x\rangle +g(y)+\frac{1}{p\gamma}\Vert x-y\Vert^p\right\}.
\end{equation}
 The \textit{high-order forward-backward envelope} (HiFBE) of $\gf$ with parameter $\gamma$, $\fgam{\gf}{p}{\gamma}: \R^n\to \R\cup\{\pm \infty\}$, is defined as
\begin{equation}\label{HFBE}
 \fgam{\gf}{p}{\gamma}(x):=\mathop{\bs{\inf}}\limits_{y\in \R^n}\left\{f(x)+\langle \nabla f(x) , y - x\rangle +g(y)+\frac{1}{p\gamma}\Vert x-y\Vert^p\right\}.
\end{equation}
\end{definition}

For $p=2$, HiFBE reduces to the standard forward-backward envelope (FBE) as defined in \cite[Definition 2.1]{Stella17}. 
The choice of regularizer power $p$ in Definition~\ref{def:HFBE} is dictated by the smoothness properties of $f$,
 particularly its membership in $\maj{L_p}{p}{\R^n}$, which affects the existence of a high-order majorant.
As will be elaborated on in Remark~\ref{rem:powofmajreg}, selecting an inappropriate power may result in the inner term of \eqref{HFBE} not majorizing $\gf$.

In the following, we define the auxiliary function $\ell: \R^n\times\R^n\to\Rinf$ as
\begin{equation}\label{eq:ellfun} 
  \ell(x,y):=f(x)+\langle \nabla f(x) , y - x\rangle +g(y),
\end{equation}
and the residual mapping as $\bs R_{\gamma , p}(x):=x - \Tprox{\gf}{\gamma}{p} (x)$.

The next remark clarifies the relationship between HiFBS at a point $x \in \R^n$ and HOPE evaluated at $x - \gamma \nabla f(x)$. 
 
\begin{remark}\label{rem:Hifbenotmor} 
For $p=2$, \cite[Eq. (1.7)]{Stella17} establishes that $\Tprox{\gf}{\gamma}{p} (x)= \prox{g}{\gamma}{p} (x-\gamma\nabla f(x))$, aligning the forward-backward splitting mapping with the standard proximal operator. 
However, for $p\neq 2$, this relationship between HiFBS at $x$ for $\gf$ and HOPE at $x-\gamma\nabla f(x)$ for $g$ does not hold, even under convexity. As an instance, consider $f, g:\R\to \R$ defined as $f(x)=\frac{1}{2}x^2$ and $g(x)=x^2$, with $\ov{x}=1$, $\gamma=2$, and $p=3$. It follows that
\[
\Tprox{\gf}{\gamma}{p} (\ov{x})=\argmint{y\in \R^n}\left\{\gh_1(y):=f(\ov{x})+\langle \nabla f(\ov{x}) , y - \ov{x}\rangle +g(y)+\frac{1}{p\gamma}\Vert \ov{x}-y\Vert^p\right\}=\left\{3-\sqrt{10}\right\},
\]
and 
\[
\prox{g}{\gamma}{p} (\ov{x}-\gamma\nabla f(\ov{x}))=\argmint{y\in \R^n}\left\{\gh_2(y):=g(y)+\frac{1}{p\gamma}\Vert \ov{x}-\gamma \nabla f(\ov{x})-y\Vert^p\right\}=\left\{-3+2\sqrt{2}\right\}.
\]
This discrepancy is also observable for $p \in (1, 2)$.
Hence, results developed for HOME and HOPE in \cite{Kabgani24itsopt} cannot be applied directly to HiFBE and HiFBS.
Figure~\ref{fig:qpdiff} illustrates the functions involved in this example for clarity.
\end{remark}
\begin{figure}[H]
\centering
    \includegraphics[width=0.8\textwidth]{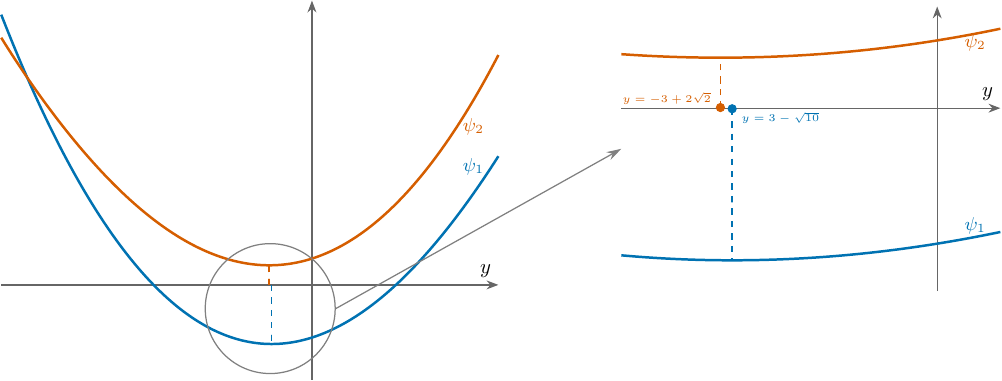}
    \caption{Comparison of minimizers of $\gh_1(y)$ and $\gh_2(y)$ as discussed in Remark~\ref{rem:Hifbenotmor}}
    \label{fig:qpdiff}
\end{figure}

The following theorem establishes fundamental properties of HiFBE.

\begin{theorem}[Fundamental properties of HiFBE]\label{th:basichifbe}
Let $p>1$, $f:\R^n\to \R$ be Fr\'{e}chet differentiable, and $g:\R^n\to \Rinf$ be a proper lsc function. For $\gf(x):=f(x)+g(x)$, the following statements hold:
\begin{enumerate}[label=(\textbf{\alph*}), font=\normalfont\bfseries, leftmargin=0.7cm]
\item \label{th:basichifbe:dom} $\dom{\fgam{\gf}{p}{\gamma}}=\R^n$ 
and $\fgam{\gf}{p}{\gamma}(x)\leq  \gf(x)$  for each $\gamma>0$ and $x\in \R^n$.
\end{enumerate}
If $f\in \maj{L_p}{p}{\R^n}$, then for each $\gamma\in (0, L_p^{-1})$, 
 \begin{enumerate}[label=(\textbf{\alph*}), font=\normalfont\bfseries, leftmargin=0.7cm, start=2]
\item \label{th:basichifbe:ineqforp} $\gf(y)\leq \ell(x,y)+\frac{1}{p\gamma}\Vert x - y\Vert^p$ for each $x,y\in \R^n$. Moreover, 
if $y\in \Tprox{\gf}{\gamma}{p} (x)$, then $\gf(y)\leq \fgam{\gf}{p}{\gamma}(x)$;
\item \label{th:basichifbe:infimforp} for each $\mu\in (\gamma, L_p^{-1})$, $\bs\inf_{z\in\R^n}\gf(z)\leq \fgam{\gf}{p}{\mu}(x)\leq  \fgam{\gf}{p}{\gamma}(x)\leq  \gf(x)$ for each $x\in \R^n$;
\item \label{th:basichifbe:infim2forp}$\mathop{\bs{\inf}}\limits_{z\in\R^n}\gf(z)=\mathop{\bs{\inf}}\limits_{z\in\R^n}\fgam{\gf}{p}{\gamma}(z)$.
\end{enumerate}
If $g$ is high-order prox-bounded with threshold $\gamma^{g, p}>0$,  then
 \begin{enumerate}[label=(\textbf{\alph*}), font=\normalfont\bfseries, leftmargin=0.7cm, start=5]
\item \label{th:basichifbe:finite} 
for each $x\in \R^n$ and $\gamma\in (0, \gamma^{g, p})$, $\fgam{\gf}{p}{\gamma}(x)$ is finite;

 \item \label{th:basichifbe:levelunif} the function $\Psi(y, x, \gamma):=\ell(x,y)+\frac{1}{p\gamma}\Vert x- y\Vert^p$  is
 level-bounded in $y\in\R^n$ locally uniformly in $(x, \gamma)\in\R^n\times (0, \gamma^{g, p})$. 
Additionally, this function is lsc;

\item \label{th:basichifbe:con} $\fgam{\gf}{p}{\gamma}$ depends continuously on $(x,\gamma)$ in $\R^n\times (0, \gamma^{g, p})$.
 \end{enumerate}
 If there exist $\gamma>0$ and $\ov{x}\in \R^n$ such that $\fgam{\gf}{p}{\gamma}(\ov{x})>-\infty$, then
  \begin{enumerate}[label=(\textbf{\alph*}), font=\normalfont\bfseries, leftmargin=0.7cm, start=8]
 \item \label{th:basichifbe:finproxb} $g$ is high-order prox-bounded.
  \end{enumerate}
\end{theorem}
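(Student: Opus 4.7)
The plan is to translate the finiteness of $\fgam{\gf}{p}{\gamma}(\ov{x})$ into a finite lower bound for the high-order Moreau infimum $\fgam{g}{\gamma',p}{}(\ov{x})$ at the same base point, for an appropriately chosen smaller parameter $\gamma'\in(0,\gamma)$. Set $M:=\fgam{\gf}{p}{\gamma}(\ov{x})>-\infty$ and $a:=\nabla f(\ov{x})$. Unwinding Definition~\ref{def:HFBE} as a pointwise inequality, I would first observe that for every $y\in\R^n$,
\[
g(y)+\frac{1}{p\gamma}\Vert y-\ov{x}\Vert^p \;\geq\; M-f(\ov{x})-\langle a,y-\ov{x}\rangle,
\]
and then use Cauchy--Schwarz to dominate the inner product by $\Vert a\Vert\,\Vert y-\ov{x}\Vert$.

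Next, I would fix any $\gamma'\in(0,\gamma)$ and let $\delta:=\frac{1}{p\gamma'}-\frac{1}{p\gamma}>0$. Splitting the extra $p$-power mass off from the $\gamma'$-regularized objective yields
\[
g(y)+\frac{1}{p\gamma'}\Vert y-\ov{x}\Vert^p \;\geq\; \bigl(M-f(\ov{x})\bigr)+\delta\,\Vert y-\ov{x}\Vert^p-\Vert a\Vert\,\Vert y-\ov{x}\Vert.
\]
Setting $t:=\Vert y-\ov{x}\Vert\geq 0$, the scalar map $t\mapsto \delta\,t^p-\Vert a\Vert\,t$ is continuous on $[0,+\infty)$ and diverges to $+\infty$ as $t\to+\infty$ because $p>1$, so it attains a finite minimum $m^\star\in\R$. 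Taking the infimum in $y$ therefore gives $\fgam{g}{\gamma',p}{}(\ov{x})\geq M-f(\ov{x})+m^\star>-\infty$, which by Definition~\ref{def:s-prox-bounded} establishes that $g$ is high-order prox-bounded; as a by-product, this also yields $\gamma^{g,p}\geq\gamma$.

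The main obstacle, and the only substantive step, is controlling the affine perturbation $\langle a,y-\ov{x}\rangle$, which on its own would make the right-hand side unbounded below. The remedy is to spend a small amount of the proximal parameter (passing from $\gamma$ to $\gamma'<\gamma$) to create a strictly positive $p$-power coefficient $\delta$ that dominates the linear growth. This is precisely the place where the assumption $p>1$ is indispensable: for $p=1$, the difference $\delta\,t-\Vert a\Vert\,t$ can fail to be bounded below, and the argument collapses. If one prefers a quantitative estimate rather than the continuity-and-coercivity argument, the same conclusion can be obtained by applying Young's inequality \eqref{eq:intrp:p02} with conjugate exponents $p$ and $q=p/(p-1)$ to split $\Vert a\Vert\,t\leq \tfrac{\varepsilon^p}{p}\,t^p+\tfrac{1}{q\varepsilon^q}\Vert a\Vert^q$ for any $\varepsilon>0$, and then picking $\varepsilon>0$ small enough that $\tfrac{\varepsilon^p}{p}<\delta$.
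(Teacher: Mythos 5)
Your proposal proves only assertion~\ref{th:basichifbe:finproxb}. The theorem as stated contains seven further assertions, \ref{th:basichifbe:dom}--\ref{th:basichifbe:con} (the domain identity and upper bound, the majorization inequality, the monotonicity and sandwich in $\gamma$, the equality of infima, finiteness, locally uniform level-boundedness, and joint continuity), none of which are addressed. As a proof of the full statement the proposal is therefore incomplete, and the omitted parts are not all one-liners: for instance, \ref{th:basichifbe:finite} requires splitting the parameter $\gamma$ between the affine term and $g$ and invoking Fact~\ref{fact:level-bound+locally uniform}, while \ref{th:basichifbe:levelunif} and \ref{th:basichifbe:con} rest on a separate locally uniform level-boundedness argument combined with \cite[Theorem 1.17]{Rockafellar09}.

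For assertion~\ref{th:basichifbe:finproxb} itself, your argument is correct and takes a genuinely different route from the paper's. The paper works at the same $\gamma$ and argues by contradiction: it takes a sequence $y^k$ driving $g(y^k)+\frac{1}{p\gamma}\Vert y^k-\ov{x}\Vert^p$ to $-\infty$, asserts that $\Vert y^k-\ov{x}\Vert$ remains bounded so that the linear term $\langle \nabla f(\ov{x}), y^k-\ov{x}\rangle$ is controlled, and contradicts the lower bound $\beta_0$. You instead sacrifice an arbitrarily small amount of the parameter, passing to $\gamma'<\gamma$, so that the surplus coefficient $\delta=\frac{1}{p\gamma'}-\frac{1}{p\gamma}>0$ on $\Vert y-\ov{x}\Vert^p$ dominates the Cauchy--Schwarz bound $\Vert\nabla f(\ov{x})\Vert\,\Vert y-\ov{x}\Vert$; since $p>1$, the scalar map $t\mapsto \delta t^p-\Vert\nabla f(\ov{x})\Vert t$ is bounded below on $[0,+\infty)$, giving an explicit finite lower bound for $\fgam{g}{\gamma',p}{}(\ov{x})$. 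This is constructive, sidesteps the delicate boundedness claim for the minimizing sequence in the paper's contradiction argument, and delivers the sharper quantitative by-product $\gamma^{g,p}\ge\gamma$; the only price is that prox-boundedness is certified at $\gamma'<\gamma$ rather than at $\gamma$ itself, which is all Definition~\ref{def:s-prox-bounded} requires. Your identification of $p>1$ as the point where the argument would fail, and the Young-inequality variant of the coercivity step, are both accurate.
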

\begin{proof}
 See Appendix~\ref{sec:app:proofs}.
\end{proof}

In the next remark, we discuss the relationship between the power of the regularizer in HiFBE and HiFBS and the power of the high-order majorant.
\begin{remark}\label{rem:powofmajreg}
If $f \in \maj{L_p}{p}{\R^n}$, Theorem~\ref{th:basichifbe}~\ref{th:basichifbe:ineqforp} guarantees the existence of a majorant for $\gf(x) = f(x) + g(x)$ of the form $\ell(x, y) + \frac{1}{p \gamma} \Vert x - y \Vert^p$, valid for all $x, y \in \R^n$ and $\gamma \in (0, L_p^{-1})$. Employing the same power $p$ for the regularizer in HiFBE and HiFBS is crucial for ensuring practical properties, notably those in Theorem~\ref{th:basichifbe}~\ref{th:basichifbe:infimforp} and \ref{th:basichifbe:infim2forp}.
Specifically, if $f \in \maj{L_p}{p}{\R^n}$, it need not belong to $\maj{L_q}{q}{\R^n}$ for $q \neq p$, implying that using a regularizer power $q$ could invalidate majorization. For instance, consider $f,g: \R\to\R$ defined by $f(x)=\frac{1}{2}x^2$ and $g(x)=0$. Then, $f\in \mathcal{C}^{1, 1}_{L}(\R)$ with $L=1$ and thus $f\in \maj{L_2=1}{2}{\R^n}$. 
For $p\neq 2$ and fix $x=0$, we claim that $\ell(x,y)+\frac{1}{p\gamma}\Vert x- y\Vert^p$ is not a majorant for $\gf$. Indeed, in this case,
$\ell(x,y)+\frac{1}{p\gamma}\Vert x-y\Vert^p=\frac{1}{p\gamma}\Vert y\Vert^p$,
while $\gf(y)=\frac{1}{2}y^2$. Thus, there is no $\gamma>0$ such that $\gf(y)\leq \frac{1}{p\gamma}\Vert y\Vert^p$ for all $y\in \R$. In conclusion,
the power of the high-order majorant dictates the regularizer power in Definition~\ref{def:HFBE}.
\end{remark}

The subsequent result indicates the nonemptiness and outer semicontinuity of the HiFBS operator $\Tprox{\gf}{\gamma}{p}$.
\begin{theorem}[Fundamental properties of HiFBS]\label{th:hopbfb}
Let $p>1$, $f:\R^n\to \R$ be Fr\'{e}chet differentiable, and $g:\R^n\to \Rinf$ be a proper lsc function that is high-order prox-bounded with the threshold $\gamma^{g, p}>0$. For the function $\gf(x):=f(x)+g(x)$, the following statements hold
 \begin{enumerate}[label=(\textbf{\alph*}), font=\normalfont\bfseries, leftmargin=0.7cm]
\item \label{th:hopbfb:proxb:proxnonemp} for each $x\in \R^n$ and $\gamma\in (0, \gamma^{g, p})$,
 $\Tprox{\gf}{\gamma}{p}(x)$ is nonempty and compact;

\item \label{th:hopbfb:proxb:conv} if $y^k\in \Tprox{\gf}{\gamma^k}{p}(x^k)$, $x^k\to \ov{x}$, and $\gamma^k\to \gamma\in (0, \gamma^{g, p})$, then the sequence $\{y^k\}_{k\in \mathbb{N}}$ is bounded and all its cluster points lie in $\Tprox{\gf}{\gamma}{p}(\ov{x})$.
    \end{enumerate} 
\end{theorem}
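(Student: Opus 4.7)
The plan is to read both assertions as parametric Weierstrass statements about the joint objective
\begin{equation*}
\Psi(y,x,\gamma):=f(x)+\langle\nabla f(x),y-x\rangle+g(y)+\tfrac{1}{p\gamma}\Vert x-y\Vert^{p},
\end{equation*}
and to leverage the three structural facts about $\Psi$ and $\fgam{\gf}{p}{\gamma}$ already secured in Theorem~\ref{th:basichifbe}: joint lower semicontinuity and local uniform level-boundedness in $y$ of $\Psi$ (part~\ref{th:basichifbe:levelunif}), finiteness of the envelope on $\R^{n}\times(0,\gamma^{g,p})$ (part~\ref{th:basichifbe:finite}), and joint continuity of the envelope in $(x,\gamma)$ (part~\ref{th:basichifbe:con}).

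For assertion~\ref{th:hopbfb:proxb:proxnonemp}, fix $x\in\R^{n}$ and $\gamma\in(0,\gamma^{g,p})$. By part~\ref{th:basichifbe:finite} of Theorem~\ref{th:basichifbe}, $\fgam{\gf}{p}{\gamma}(x)\in\R$, so a minimizing sequence $\{y^{k}\}$ for $\Psi(\cdot,x,\gamma)$ exists. For any $\lambda>\fgam{\gf}{p}{\gamma}(x)$ the set $\{y:\Psi(y,x,\gamma)\le\lambda\}$ is bounded by the level-boundedness in part~\ref{th:basichifbe:levelunif}, hence $\{y^{k}\}$ is bounded; extracting a convergent subsequence $y^{k_{j}}\to\ov y$ and using lower semicontinuity of $\Psi(\cdot,x,\gamma)$ we conclude $\Psi(\ov y,x,\gamma)\le\fgam{\gf}{p}{\gamma}(x)$, so $\ov y\in\Tprox{\gf}{\gamma}{p}(x)$. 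Compactness of $\Tprox{\gf}{\gamma}{p}(x)$ follows because it equals $\{y:\Psi(y,x,\gamma)\le\fgam{\gf}{p}{\gamma}(x)\}$, which is closed by lower semicontinuity and bounded by level-boundedness.

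For assertion~\ref{th:hopbfb:proxb:conv}, take $x^{k}\to\ov x$, $\gamma^{k}\to\gamma\in(0,\gamma^{g,p})$, and $y^{k}\in\Tprox{\gf}{\gamma^{k}}{p}(x^{k})$, so that $\Psi(y^{k},x^{k},\gamma^{k})=\fgam{\gf}{p}{\gamma^{k}}(x^{k})$. The continuity statement~\ref{th:basichifbe:con} then yields $\Psi(y^{k},x^{k},\gamma^{k})\to\fgam{\gf}{p}{\gamma}(\ov x)\in\R$, so eventually $\Psi(y^{k},x^{k},\gamma^{k})\le\fgam{\gf}{p}{\gamma}(\ov x)+1=:\lambda$. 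Local uniform level-boundedness (part~\ref{th:basichifbe:levelunif}) supplies a neighborhood $V$ of $(\ov x,\gamma)$ and a bounded $B\subseteq\R^{n}$ with $\{y:\Psi(y,x,\gamma')\le\lambda\}\subseteq B$ for all $(x,\gamma')\in V$, whence $y^{k}\in B$ for large $k$ and $\{y^{k}\}$ is bounded.

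For the cluster-point claim, let $y^{k_{j}}\to\ov y$. For every fixed $z\in\R^{n}$, the defining inequality $\Psi(y^{k_{j}},x^{k_{j}},\gamma^{k_{j}})\le\Psi(z,x^{k_{j}},\gamma^{k_{j}})$ holds. The right-hand side tends to $\Psi(z,\ov x,\gamma)$ by continuity of $f$, $\nabla f$, and $(x,\gamma)\mapsto\tfrac{1}{p\gamma}\Vert x-z\Vert^{p}$ on $\R^{n}\times(0,\infty)$, while the left-hand side has $\liminf$ at least $\Psi(\ov y,\ov x,\gamma)$ by the joint lower semicontinuity from part~\ref{th:basichifbe:levelunif}. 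Passing to the limit gives $\Psi(\ov y,\ov x,\gamma)\le\Psi(z,\ov x,\gamma)$ for all $z$, hence $\ov y\in\Tprox{\gf}{\gamma}{p}(\ov x)$. The only delicate point is to invoke joint lower semicontinuity in $(y,x,\gamma)$ rather than only in $y$, and this is exactly what part~\ref{th:basichifbe:levelunif} delivers, so the entire argument reduces to a clean parametric Berge-type closedness.
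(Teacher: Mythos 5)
Your proof is correct and follows essentially the same route as the paper: both deduce the two assertions from the finiteness, level-boundedness, lower semicontinuity, and continuity properties established in Theorem~\ref{th:basichifbe}~\ref{th:basichifbe:finite}--\ref{th:basichifbe:con}. The only difference is that the paper delegates the parametric Weierstrass and outer-semicontinuity steps to \cite[Theorem~1.17]{Rockafellar09}, whereas you write out that standard argument explicitly.
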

\begin{proof}
Both results follow from Theorem~\ref{th:basichifbe}~\ref{th:basichifbe:finite}-\ref{th:basichifbe:con}
and \cite[Theorem 1.17]{Rockafellar09}.
\end{proof}

Let us consider problem~\eqref{eq:mainproblemcom} under the next assumptions.
 \begin{assumption}[Main assumptions]\label{assum:mainassum}
For problem \eqref{eq:mainproblemcom}, the following conditions are assumed
\begin{enumerate}[label=(\textbf{\alph*}), font=\normalfont\bfseries, leftmargin=0.7cm]
  \item \label{assum:mainassum:f} the function $f:\R^n\to \R$ is smooth (potentially nonconvex) and possesses a
  \textit{high-order majorant with power $p>1$}; i.e. $f\in \maj{L_p}{p}{\R^n}$ (refer to Definition~\ref{def:power majorization}); 
  \item \label{assum:mainassum:g} the function $g:\R^n\to \Rinf$ is proper, lsc (potentially nonconvex and nonsmooth), and
   \textit{high-order prox-bounded} (refer to Definition~\ref{def:s-prox-bounded});
  \item \label{assum:mainassum:armin} the set of minimizers is nonempty, i.e., $\argmin{x\in \R^n} \gf(x)\neq \emptyset$. We denote a minimizer by $x^*$ and the corresponding minimal value by $\gf^*$.
\end{enumerate}
\end{assumption}
In this paper, we address the nonconvex optimization problem \eqref{eq:mainproblemcom}. Hence, our proposed methods only guarantee convergence to critical points. However, we will demonstrate that these methods can identify fixed points of the operator $\Tprox{\gf}{\gamma}{p}$, which are particular critical points. To facilitate these results, we first clarify the relevant concepts in the following definition.

\begin{definition}[Reference points]\label{def:critic}
Let Assumption~\ref{assum:mainassum} hold. A point $\ov{x}\in \dom{\gf}$  is called
\begin{enumerate}[label=(\textbf{\alph*}), font=\normalfont\bfseries, leftmargin=0.7cm]
  \item \label{def:critic:f} a \textit{Fr\'{e}chet critical point} if $0\in \widehat{\partial}\gf(\ov{x})$, denoted by $\ov{x}\in \bs{{\rm Fcrit}}(\gf)$;
 \item \label{def:critic:m} a \textit{Mordukhovich critical point} if $0\in \partial \gf(\ov{x})$, denoted by $\ov{x}\in \bs{{\rm Mcrit}}(\gf)$;
   \item \label{def:critic:pfix} a \textit{proximal fixed point} if $\ov{x}\in \Tprox{\gf}{\gamma}{p}(\ov{x})$, denoted by $\ov{x}\in \bs{{\rm Fix}}(\Tprox{\gf}{\gamma}{p})$;
\end{enumerate}
\end{definition}

We conclude this section by discussing relationships among reference points and minimizers of $\gf$, highlighting the equivalence among minimizers of $\gf$ and HiFBE, which motivates the study of such envelopes.

\begin{proposition}[Relationships among reference points]\label{prop:relcrit}
Let Assumption~\ref{assum:mainassum} hold. 
Then, for each $\gamma\in (0, \bs\min\{L_p^{-1},\gamma^{g, p}\})$, the following hold
\begin{enumerate}[label=(\textbf{\alph*}), font=\normalfont\bfseries, leftmargin=0.7cm]
\item  \label{prop:relcrit:opfix} $\argmint{x\in \R^n}\gf(x)\subseteq \bs{{\rm Fix}}(\Tprox{\gf}{\gamma}{p})\subseteq
\bs{{\rm Fcrit}}(\gf)\subseteq \bs{{\rm Mcrit}}(\gf)$;

 \item \label{prop:relcrit:inf:arg:argmin}
  $\argmint{x\in \R^n} \gf(x)=\argmint{x\in \R^n} \fgam{\gf}{p}{\gamma}(x)$.
\end{enumerate}
\end{proposition}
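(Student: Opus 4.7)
The plan is to verify the chain in Assertion~\ref{prop:relcrit:opfix} link by link, and then to derive Assertion~\ref{prop:relcrit:inf:arg:argmin} by combining the descent majorant from Theorem~\ref{th:basichifbe}~\ref{th:basichifbe:ineqforp} with the nonemptiness of HiFBS from Theorem~\ref{th:hopbfb}~\ref{th:hopbfb:proxb:proxnonemp}. Throughout, the range $\gamma\in(0,\bs\min\{L_p^{-1},\gamma^{g,p}\})$ ensures that both theorems are at my disposal.

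For $\argmint{x\in\R^n}\gf(x)\subseteq\bs{{\rm Fix}}(\Tprox{\gf}{\gamma}{p})$, I take $x^*\in\argmint{x\in\R^n}\gf(x)$ and note that Theorem~\ref{th:basichifbe}~\ref{th:basichifbe:ineqforp} yields $\gf(x^*)\leq \gf(z)\leq \ell(x^*,z)+\frac{1}{p\gamma}\Vert x^*-z\Vert^p$ for every $z\in\R^n$, with equality at $z=x^*$ since $\ell(x^*,x^*)=\gf(x^*)$. Hence $x^*$ minimizes $z\mapsto\ell(x^*,z)+\frac{1}{p\gamma}\Vert x^*-z\Vert^p$, i.e., $x^*\in\Tprox{\gf}{\gamma}{p}(x^*)$. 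For the second inclusion, any $\ov{x}\in\bs{{\rm Fix}}(\Tprox{\gf}{\gamma}{p})$ satisfies Fermat's rule for the defining minimization: $0\in\widehat{\partial}[\ell(\ov{x},\cdot)+\frac{1}{p\gamma}\Vert\ov{x}-\cdot\Vert^p](\ov{x})$. Since $y\mapsto f(\ov{x})+\langle\nabla f(\ov{x}),y-\ov{x}\rangle+\frac{1}{p\gamma}\Vert\ov{x}-y\Vert^p$ is smooth with gradient $\nabla f(\ov{x})$ at $y=\ov{x}$ (the regularizer having vanishing gradient at $\ov{x}$ for $p>1$), the Fr\'{e}chet sum rule with a smooth summand gives $0\in\nabla f(\ov{x})+\widehat{\partial}g(\ov{x})=\widehat{\partial}\gf(\ov{x})$, so $\ov{x}\in\bs{{\rm Fcrit}}(\gf)$. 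The last inclusion $\bs{{\rm Fcrit}}(\gf)\subseteq\bs{{\rm Mcrit}}(\gf)$ is immediate from $\widehat{\partial}\gf\subseteq\partial\gf$.

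For Assertion~\ref{prop:relcrit:inf:arg:argmin}, the direction $\argmint{x\in\R^n}\gf(x)\subseteq\argmint{x\in\R^n}\fgam{\gf}{p}{\gamma}(x)$ is an immediate sandwich: if $x^*\in\argmint{x\in\R^n}\gf(x)$, Theorem~\ref{th:basichifbe}~\ref{th:basichifbe:dom}~and~\ref{th:basichifbe:infim2forp} yield $\fgam{\gf}{p}{\gamma}(x^*)\leq\gf(x^*)=\inf\gf=\inf\fgam{\gf}{p}{\gamma}$, forcing equality throughout.

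The main obstacle is the reverse inclusion, since $\fgam{\gf}{p}{\gamma}(x^*)=\inf\gf$ does \emph{not} by itself imply that $x^*$ minimizes $\gf$. My plan is to pick $y\in\Tprox{\gf}{\gamma}{p}(x^*)$, which is nonempty by Theorem~\ref{th:hopbfb}~\ref{th:hopbfb:proxb:proxnonemp}. Theorem~\ref{th:basichifbe}~\ref{th:basichifbe:ineqforp} then gives $\gf(y)\leq\fgam{\gf}{p}{\gamma}(x^*)=\inf\gf$, so both coincide with $\inf\gf$. Unfolding $\gf(y)=\fgam{\gf}{p}{\gamma}(x^*)=\ell(x^*,y)+\frac{1}{p\gamma}\Vert x^*-y\Vert^p$ converts this into
\[
f(y)-f(x^*)-\langle\nabla f(x^*),y-x^*\rangle=\frac{1}{p\gamma}\Vert x^*-y\Vert^p,
\]
whose left-hand side is bounded by $\frac{L_p}{p}\Vert x^*-y\Vert^p$ via the high-order descent lemma~\eqref{eq:hdlsm}. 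Since $\gamma<L_p^{-1}$, this forces $\Vert x^*-y\Vert=0$, so $x^*=y$ and $\gf(x^*)=\gf(y)=\inf\gf$, i.e., $x^*\in\argmint{x\in\R^n}\gf(x)$. It is precisely the strict step-size bound $\gamma<L_p^{-1}$ that collapses the proximal image onto $x^*$ and thereby transfers minimality from the envelope back to the original objective.
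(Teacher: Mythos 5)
Your proposal is correct and follows essentially the same route as the paper's proof: the descent majorant with $\gamma<L_p^{-1}$ gives the fixed-point inclusion, Fermat's rule plus the smooth sum rule gives criticality, and the equality of infima combined with the strict bound $\gamma L_p<1$ transfers minimizers of the envelope back to $\gf$. The only cosmetic difference is in the reverse inclusion of (b), where the paper argues by contradiction with a strict inequality chain while you derive the equality $\frac{1}{p\gamma}\Vert x^*-y\Vert^p\leq\frac{L_p}{p}\Vert x^*-y\Vert^p$ and force $y=x^*$ directly; the two arguments are interchangeable.
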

\begin{proof}
\ref{prop:relcrit:opfix} Let $\ov{x}\in \argmin{x\in \R^n} \gf(x)$. For any $y\in\R^n$, it holds that
\[
\begin{aligned}
\gf(\ov{x})\leq f(y)+g(y)&\leq f(\ov{x})+\langle \nabla f(\ov{x}), y- \ov{x}\rangle+\frac{L_p}{p}\Vert y- \ov{x}\Vert^p+g(y)
\\&\leq  f(\ov{x})+\langle \nabla f(\ov{x}), y- \ov{x}\rangle+g(y)+\frac{1}{p\gamma}\Vert y- \ov{x}\Vert^p.
\end{aligned}
\]
Thus, $\ov{x}\in \bs{{\rm Fix}}(\Tprox{\gf}{\gamma}{p})$. If $\ov{x}\in  \bs{{\rm Fix}}(\Tprox{\gf}{\gamma}{p})$, then by \cite[Exercise~8.8 and Theorem~10.1]{Rockafellar09},
$0\in  \nabla f(\ov{x}) +  \widehat{\partial} g(\ov{x})$, which implies $0\in \widehat{\partial}\gf(\ov{x})$.
Hence, $\bs{{\rm Fix}}(\Tprox{\gf}{\gamma}{p})\subseteq\bs{{\rm Fcrit}}(\gf)$. The inclusion
$\widehat{\partial}\gf(\ov{x})\subseteq\partial \gf(\ov{x})$ completes the proof.
\\
\ref{prop:relcrit:inf:arg:argmin} Let $\ov{x}\in \argmin{x\in \R^n} \gf(x)$. For any $x,y\in \R^n$, invoking Assertion~\ref{prop:relcrit:opfix} ensures
\[
\fgam{\gf}{p}{\gamma}(\ov{x})=\gf(\ov{x})\leq \gf(y)
\leq  f(x)+\langle \nabla f(x), y- x\rangle+g(y)+\frac{1}{p\gamma}\Vert y- x\Vert^p.
\]
Thus, for each $x\in \R^n$, $\fgam{\gf}{p}{\gamma}(\ov{x})\leq \fgam{\gf}{p}{\gamma}(x)$, which means 
$\argmin{x\in \R^n} \gf(x)\subseteq \argmin{x\in \R^n} \fgam{\gf}{p}{\gamma}(x)$.
Conversely, suppose $\ov{x}\in \argmin{x\in \R^n} \fgam{\gf}{p}{\gamma}(x)$ and
$\ov{y}\in \Tprox{\gf}{\gamma}{p} (\ov{x})$. We claim that, $\ov{x} = \ov{y}$. Otherwise, $L_p<\frac{1}{\gamma}$ implies,
\[
\begin{aligned}
f(\ov{x})+\langle\nabla f(\ov{x}), \ov{y}- \ov{x}\rangle+g(\ov{y})+\frac{1}{p\gamma}\Vert \ov{x}-\ov{y}\Vert^p&=\fgam{\gf}{p}{\gamma}(\ov{x})
=\mathop{\bs{\inf}}\limits_{x\in \R^n}  \fgam{\gf}{p}{\gamma}(x)=\mathop{\bs{\inf}}\limits_{x\in \R^n} \gf(x)\leq \gf(\ov{y})
\\&< f(\ov{x})+\langle\nabla f(\ov{x}), \ov{y}- \ov{x}\rangle+g(\ov{y})+\frac{1}{p\gamma}\Vert \ov{x}-\ov{y}\Vert^p,
\end{aligned}
\]
which is a contradiction. Hence, $\ov{x}\in  \bs{{\rm Fix}}(\Tprox{\gf}{\gamma}{p})$, which implies
$\gf(\ov{x}) = \fgam{\gf}{p}{\gamma}(\ov{x})=\mathop{\bs{\inf}}\limits_{x\in \R^n}  \fgam{\gf}{p}{\gamma}(x)=\mathop{\bs{\inf}}\limits_{x\in \R^n} \gf(x)$, i.e.,  
$ \argmin{x\in \R^n} \fgam{\gf}{p}{\gamma}(x)\subseteq \argmin{x\in \R^n} \gf(x)$.
\end{proof}



\section{Boosted high-order forward-backward methods}
\label{sec:HiFBEMET}

Let us consider the problem~\eqref{eq:mainproblemcom} under Assumption~\ref{assum:mainassum}. A natural way to generate a sequence $\{x^k\}_{k\in \Nz}$ is via a generalized forward-backward splitting method, which we call the {\it high-order forward-backward algorithm}, defined by \eqref{eq:fbsGeneral:p}.
When $p=2$, this reduces to the proximal gradient method, i.e.,
\begin{equation}\label{eq:cl:fbep:prox}
x^{k+1}\in \argmint{y\in\R^n}\left\{g(y)+\frac{1}{2\gamma}\Vert x^k-\gamma \nabla f(x^k)-y\Vert^2\right\}.
\end{equation}
For specific choices of $g$, this problem may admit a closed-form solution. In many practical settings, however, no closed-form solution is available, motivating inexact methods (see, e.g., \cite{Ahookhosh24,barre2023principled,Dvurechenskii2022,Nesterov2023a,rockafellar1976monotone,Salzo12,Solodov01}). 
Moreover, for general $p > 1$, reformulating \eqref{eq:fbsGeneral:p} into a form analogous to \eqref{eq:cl:fbep:prox} is generally not feasible, as discussed in Remark~\ref{rem:Hifbenotmor}. 
These considerations motivate a framework to approximate elements of HiFBS, which we develop in Subsection~\ref{subsec:inexacthifbe}.

Beyond \eqref{eq:fbsGeneral:p}, alternative strategies construct iterates using an element of HiFBS at the current iterate $x^k$ together with a search direction $d^k$. In Subsection~\ref{subsec:bohifbe}, we propose a general framework encompassing such strategies and introduce
the high-order inexact forward-backward algorithm (HiFBA) and its boosted variant,
which use an approximation of $\fgam{\gf}{p}{\gamma}$ as a Lyapunov function to enforce a descent condition. We subsequently analyze their convergence properties, including global convergence and linear rates.

\subsection{{\bf Inexact oracle for HiFBE}}
\label{subsec:inexacthifbe}
Our proposed methods require the following assumptions to construct approximate elements of HiFBS.

\begin{assumption}[Assumptions on inexactness]\label{assum:approx} 
For problem \eqref{eq:mainproblemcom}, we assume
\begin{enumerate}[label=(\textbf{\alph*}), font=\normalfont\bfseries, leftmargin=0.7cm]
\item \label{assum:approx:coer weak con} $p>1$, $f\in \maj{L_p}{p}{\R^n}$, and $g$ is bounded below;
\item \label{assum:approx:eps}  $\{\varepsilon_k\}_{k\in \Nz}$ is a non-increasing sequence of positive scalars with $\ov{\varepsilon}:=\sum_{k=0}^{\infty} \varepsilon_k<\infty$;    
\item \label{assum:approx:aproxep} for a given $x^k\in \R^n$, $\gamma>0$, and $\varepsilon_k> 0$, we can find
a \textit{prox approximation} $\Tprox{\gf}{\gamma}{p, \varepsilon_k}(x^k)$ such that
\begin{align}
&\dist\left(\Tprox{\gf}{\gamma}{p, \varepsilon_k}(x^k), \Tprox{\gf}{\gamma}{p}(x^k)\right)\to 0~\text{as}~ k\to\infty,\label{eq:ep-approx:dist}    
\\
&\ell\left(x^k,\Tprox{\gf}{\gamma}{p, \varepsilon_k}(x^k)\right)+\frac{1}{p\gamma}\Vert x^k-\Tprox{\gf}{\gamma}{p, \varepsilon_k}(x^k)\Vert^p< \fgam{\gf}{p}{\gamma}(x^k)+\varepsilon_k.\label{eq:ep-approx:fun}
\end{align}
\end{enumerate}
\end{assumption}
The {\it inexact function value} of $\fgam{\gf}{p}{\gamma}(x^k)$ is defined by
\begin{equation*}
\fgam{\gf}{p,\varepsilon_k}{\gamma}(x^k):=\ell\left(x^k,\Tprox{\gf}{\gamma}{p, \varepsilon_k}(x^k)\right)+\frac{1}{p\gamma}\Vert x^k-\Tprox{\gf}{\gamma}{p, \varepsilon_k}(x^k)\Vert^p,
\end{equation*}
and we define the \textit{approximated residual} as
\[
R_{\gamma,p}^{\varepsilon_k}(x^k) := x^k-\Tprox{\gf}{\gamma}{p, \varepsilon_k}(x^k).
\]
The next remark provides the concerning Assumption~\ref{assum:approx}.

\begin{remark}\label{rem:onapprox}
\begin{enumerate}[label=(\textbf{\alph*}), font=\normalfont\bfseries, leftmargin=0.7cm]
\item \label{rem:onapprox:a}
Assumption~\ref{assum:approx}~\ref{assum:approx:coer weak con} requires $g$ to be bounded below, a standard condition in minimization. By Remark~\ref{rem:prox-bounded}, this implies $\gamma^{g, p}=+\infty$, so constraints such as $\gamma\in (0, \gamma^{g, p})$ are unnecessary here.

\item \label{rem:onapprox:b}
From \eqref{HFBE} and properties of infima, for any $\varepsilon>0$ there exists $y\in \R^n$ with $\ell\left(x,y\right)+\frac{1}{p\gamma}\Vert x-y\Vert^p< \fgam{\gf}{p}{\gamma}(x)+\varepsilon$. Hence, \eqref{eq:ep-approx:fun} is well-defined. Moreover, for each $k\in \Nz$, we have $\fgam{\gf}{p}{\gamma}(x^k)\leq \fgam{\gf}{p,\varepsilon_k}{\gamma}(x^k)\leq \fgam{\gf}{p}{\gamma}(x^k)+\varepsilon_k$.

\item 
The proximity in \eqref{eq:ep-approx:dist} is a standard requirement in inexact proximal frameworks \cite{fukushima1996globally,Salzo12,rockafellar1976monotone2}. 
For convex problems with $p = 2$, \cite[Lemma~3.1]{fukushima1996globally} shows that \eqref{eq:ep-approx:fun} implies
 $\Vert \Tprox{\gf}{\gamma}{2, \varepsilon_k}(x^k)- \Tprox{\gf}{\gamma}{2}(x^k)\Vert\leq \sqrt{2\gamma_k \varepsilon_k}$. 
 Deriving such a bound for nonconvex functions is non-trivial, as the upper bound for $\dist\left(\Tprox{\gf}{\gamma}{p, \varepsilon_k}(x^k), \Tprox{\gf}{\gamma}{p}(x^k)\right)$ concerning \eqref{eq:ep-approx:fun} depends on the geometry of the function $g$.
We provide an instance to illustrate this. Define the function $\Psi^p(x, y): \R^n \times \R^n \to \Rinf$ as
$\Psi^p(x, y):= \langle\nabla f(x), y\rangle +g(y)+\frac{1}{p\gamma}\Vert x - y\Vert^p$
where $p \in (1, 2]$. Suppose that for each $x^k$, $\Psi^p(x^k, y)$ satisfies a \textit{H\"{o}lderian error bound} of order $\delta \in (0,1]$ with respect to $y$ \cite{bolte2017from,johnstone2020faster,Liao2024error}, with parameters $(\mu_k, \varepsilon_k)$ for a bounded sequence $\{\mu_k\}_{k \in \Nz}$ of positive scalars, i.e.,
\[
\dist\left(y, \Tprox{\gf}{\gamma}{p}(x)\right)^{\frac{1}{\delta}}\leq \mu_k\left(\Psi^p(x^k, y) - \Psi^p(x^k, \ov{y}^k)\right),\qquad \forall y\in \mathcal{L}(\Psi^p(x^k,\cdot), \Psi^p(x^k, \ov{y}^k)+\varepsilon_k),
\]
where $\ov{y}^k \in \Tprox{\gf}{\gamma}{p}(x^k)$ is arbitrary. Leveraging \eqref{eq:ep-approx:fun}, for each iterate $x^k$ and any $\ov{y}^k \in \Tprox{\gf}{\gamma}{p}(x^k)$, ensures
\begin{align*}
 \dist\left(\Tprox{\gf}{\gamma}{p, \varepsilon_k}(x^k), \Tprox{\gf}{\gamma}{p}(x^k)\right)^{\frac{1}{\delta}}\leq \mu_k\left(\Psi^p(x^k, \Tprox{\gf}{\gamma}{p, \varepsilon_k}(x^k)) - \Psi^p(x^k, \ov{y}^k)\right)
=\mu_k\left(\fgam{\gf}{p,\varepsilon_k}{\gamma}(x^k) -\fgam{\gf}{p}{\gamma}(x^k)\right)<\mu_k\varepsilon_k.
\end{align*}
Thus, $\dist\left(\Tprox{\gf}{\gamma}{p, \varepsilon_k}(x^k), \Tprox{\gf}{\gamma}{p}(x^k)\right)<\left(\mu_k\varepsilon_k\right)^{\delta}$, ensuring that $\dist\left(\Tprox{\gf}{\gamma}{p, \varepsilon_k}(x^k), \Tprox{\gf}{\gamma}{p}(x^k)\right)\to 0$ as $k\to \infty$.
For $\delta = \frac{1}{2}$, the H\"{o}lderian error bound is also known as the \textit{quadratic growth condition}. In \cite{Liao2024error}, the equivalence between the quadratic growth condition and several regularity conditions for weakly convex functions is thoroughly investigated. In optimization, such regularity conditions are typically employed to establish linear convergence. 
Exploring the relationships among these regularity conditions for a broader class of functions beyond weak convexity, for $\delta \neq \frac{1}{2}$, and their application to computing inexact elements of HiFBS is beyond the scope of this paper and will be addressed in future work.
\end{enumerate}
\end{remark}

\subsection{{\bf Boosted HiFBA}}
\label{subsec:bohifbe}

We are now ready to introduce HiFBA and a boosted variant (Boosted HiFBA) in Algorithm~\ref{alg:inexact}.
HiFBA follows the approach given in \eqref{eq:fbsGeneral:p}, where $x^{k+1}$ is computed approximately.
In Boosted HiFBA, generating the next iterate $x^{k+1}$ from the current point $x^k$ using a step-size $\alpha_k > 0$ and a search direction $d^k$ plays a key role. Various structures for computing the next solution have been proposed in the literature \cite{Ahookhosh21,AragonArtacho2018Accelerating,Themelis18}. In the theoretical analysis, however, it suffices to impose general properties of these structures to establish convergence. We formalize this in the following definition.

\begin{definition}[Structural iteration]\label{def:structured}
For $x\in \R^n$, let $\ov{x}=\Tprox{\gf}{\gamma}{p, \varepsilon}(x)$ for some $\varepsilon>0$. A mapping
$\st: [0,+\infty)\times\R^n\to \R^n$ is called a \textit{structural iteration} with respect to $(x, \ov{x})$ if it satisfies
\begin{enumerate}[label=(\textbf{\alph*}), font=\normalfont\bfseries, leftmargin=0.7cm]
\item\label{def:structured:a} for each fixed $d\in\R^n$, $\st(\alpha,d)\to \ov{x}$ as $\alpha\downarrow 0$;

\item\label{def:structured:b} $\Vert \st(\alpha,d) - x\Vert\leq \Vert x-\ov{x}\Vert+\Vert d\Vert$.
\end{enumerate}
\end{definition}
   
To illustrate the concept, we give several instances of structural iterations for constructing $x^{k+1}$.  

\begin{example}[Instances of structural iteration]\label{ex:struc}
For $x^k\in \R^n$, let $\ov{x}^k=\Tprox{\gf}{\gamma}{p, \varepsilon}(x^k)$ for some $\varepsilon^k>0$.
We introduce examples illustrating the construction of the next iterate $x^{k+1}$ that adheres to the requirements of a structural iteration.
\begin{enumerate}[label=(\textbf{\alph*}), font=\normalfont\bfseries, leftmargin=0.7cm]
\item\label{ex:struc:a}  Set $x^{k+1}:=\st(\alpha_k, d^k)= \ov{x}^k$. This is the standard proximal update (cf. \eqref{eq:fbsGeneral:p}). In this paper, we refer to this structure as HiFBA.

\item\label{ex:struc:b} Fix $\vartheta\in (0,1)$ and let $\alpha_k = \vartheta^m$, for $m\in \Nz$, initialized at $m=0$. Some algorithms set $\widehat{x}^{k+1}:=\st(\alpha_k, d^k)=\ov{x}^{k}+\alpha_k d^k$ and increase $m$ in an inner loop until a descent condition holds for $\alpha_k=\vartheta^{\ov{m}}$, then take $x^{k+1}=\widehat{x}^{k+1}=\ov{x}^{k}+\vartheta^{\ov{m}} d^k$.
Since $\alpha_k=\vartheta^{m}\downarrow 0$ as $m\to\infty$ and $\alpha_k<1$  for any $m\in \Nz$, the conditions of the structural iteration are satisfied for this $\st$.

\item\label{ex:struc:c} Similar to part~\ref{ex:struc:b} in finding $\alpha_k$, another approach is
$\widehat{x}^{k+1}:=\st(\alpha_k, d^k)=(1-\alpha_k)\ov{x}^{k}+\alpha_k(x^k+d^k)$.
Indeed,
\[
\Vert \st(\alpha^k,d^k) - x^k\Vert=\Vert (1-\alpha_k)\ov{x}^{k}+\alpha_k(x^k+d^k)-x^k\Vert   \overset{(i)}{\leq}  \Vert x^k-\ov{x}^k\Vert+\Vert d^k\Vert,
\]
 where $(i)$ follows from $(1-\alpha_k), \alpha_k\leq 1$. Thus, the conditions of the structural iteration are satisfied for $\st$.
\end{enumerate}
\end{example}

In what follows, we adopt a general structural iteration $\st$ and defer concrete implementations to Section~\ref{sec:numerical}. In Algorithm~\ref{alg:inexact}, we do not impose a descent condition on $d^k$; possible choices for $d^k$ are discussed in Section~\ref{sec:numerical}.


\begin{algorithm}
\caption{Boosted HiFBA (Boosted High-Order Inexact Forward-Backward Algorithm)}\label{alg:inexact}
\begin{algorithmic}[1]
\State \textbf{Initialization} Start with $x^0\in \R^n$, $\gamma\in \left(0, L_p^{-1}\right)$, $\sigma\in \left(0, \frac{1-\gamma L_p}{p\gamma}\right)$, $\vartheta\in (0,1)$, and set $k=0$;
\While{stopping criteria do not hold}
\State Compute $\ov{x}^{k}=\Tprox{\gf}{\gamma}{p, \varepsilon_k}(x^k)$ and $R_{\gamma,p}^{\varepsilon_k}(x^k)$; \Comment{lower-level}
\State Choose a direction $d^k\in \R^n$ and set $m=0$;  \Comment{upper-level};
\Repeat\label{alg:hippa:strep}
\State\label{alg:hippa:dir} Set $\alpha_k=\vartheta^m$ and
$\widehat{x}^{k+1}:=\st(\alpha_k,d^k),~~m=m+1$; \Comment{upper-level}
\State Compute $\Tprox{\gf}{\gamma}{p, \varepsilon_{k+1}}(\widehat{x}^{k+1})$ and $\fgamepsko{\gf}{p}{\gamma}(\widehat{x}^{k+1})$; \Comment{lower-level}
\Until{
\begin{equation}\label{eq:alg:ingrad:upd}
\fgamepsko{\gf}{p}{\gamma}(\widehat{x}^{k+1})\leq
\fgamepsk{\gf}{p}{\gamma}(x^k)-\sigma\Vert R_{\gamma,p}^{\varepsilon_k}(x^k)\Vert^{p} +\varepsilon_{k}+\varepsilon_{k+1},
\end{equation}
}\label{alg:hippa:endrep}
\State $x^{k+1}=\widehat{x}^{k+1}$;~$k=k+1$;
\EndWhile
\end{algorithmic}
\end{algorithm}

First, we establish the well-definedness of Algorithm~\ref{alg:inexact}, ensuring that the condition in Step~\ref{alg:hippa:endrep} is satisfied after a finite number of backtracking steps.

\begin{theorem}[Well-definedness of Boosted HiFBA]\label{th:welldefalg}
Let $\{x^k\}_{k\in \Nz}$ be the sequence generated by Algorithm~\ref{alg:inexact}. If $\ov{x}^{k}\neq x^k$, then there exists $\ov{m}\in \Nz$ for which \eqref{eq:alg:ingrad:upd} holds, equivalently, the inner loop in Step~\ref{alg:hippa:endrep} terminates after finitely many backtracking steps.
\end{theorem}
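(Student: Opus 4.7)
The plan is to show that as the backtracking parameter $\alpha_k=\vartheta^m$ shrinks to zero, the trial iterate $\widehat{x}^{k+1}$ converges to $\ov{x}^k$, at which limit the Lyapunov inequality \eqref{eq:alg:ingrad:upd} holds with strict slack; a standard continuity argument then delivers a finite $m$ satisfying \eqref{eq:alg:ingrad:upd}. The two ingredients are the high-order descent lemma for $f$ applied at the pair $(x^k,\ov{x}^k)$ and the continuity of the exact envelope $\fgam{\gf}{p}{\gamma}$.

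First, I would apply the high-order descent lemma \eqref{eq:hdlsm}, valid since $f\in\maj{L_p}{p}{\R^n}$, and add $g(\ov{x}^k)$ to both sides to obtain
\[
\gf(\ov{x}^k)\leq \ell(x^k,\ov{x}^k)+\tfrac{L_p}{p}\Vert R_{\gamma,p}^{\varepsilon_k}(x^k)\Vert^p.
\]
Since by definition $\fgamepsk{\gf}{p}{\gamma}(x^k)=\ell(x^k,\ov{x}^k)+\tfrac{1}{p\gamma}\Vert R_{\gamma,p}^{\varepsilon_k}(x^k)\Vert^p$, rearranging yields $\gf(\ov{x}^k)\leq \fgamepsk{\gf}{p}{\gamma}(x^k)-\tfrac{1-\gamma L_p}{p\gamma}\Vert R_{\gamma,p}^{\varepsilon_k}(x^k)\Vert^p$, which is the descent at the exact point $\ov{x}^k$.

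Next, I would use $\fgamepsko{\gf}{p}{\gamma}(\widehat{x}^{k+1})\leq \fgam{\gf}{p}{\gamma}(\widehat{x}^{k+1})+\varepsilon_{k+1}$, valid for any admissible prox approximation by Remark~\ref{rem:onapprox}~\ref{rem:onapprox:b}, and combine it with two facts: (i) $\fgam{\gf}{p}{\gamma}$ is continuous on $\R^n$ by Theorem~\ref{th:basichifbe}~\ref{th:basichifbe:con} together with $\gamma^{g,p}=+\infty$, which follows from $g$ being bounded below (Remark~\ref{rem:prox-bounded}); and (ii) $\widehat{x}^{k+1}=\st(\alpha_k,d^k)\to\ov{x}^k$ as $\alpha_k\downarrow 0$ by Definition~\ref{def:structured}~\ref{def:structured:a}. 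Using also $\fgam{\gf}{p}{\gamma}(\ov{x}^k)\leq\gf(\ov{x}^k)$ (Theorem~\ref{th:basichifbe}~\ref{th:basichifbe:dom}), this gives
\[
\limsup_{m\to\infty}\fgamepsko{\gf}{p}{\gamma}(\widehat{x}^{k+1})\leq \fgamepsk{\gf}{p}{\gamma}(x^k)-\tfrac{1-\gamma L_p}{p\gamma}\Vert R_{\gamma,p}^{\varepsilon_k}(x^k)\Vert^p+\varepsilon_{k+1}.
\]

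Finally, the right-hand side of \eqref{eq:alg:ingrad:upd} exceeds this limsup bound by $\bigl(\tfrac{1-\gamma L_p}{p\gamma}-\sigma\bigr)\Vert R_{\gamma,p}^{\varepsilon_k}(x^k)\Vert^p+\varepsilon_k$, which is strictly positive because $\sigma<\tfrac{1-\gamma L_p}{p\gamma}$, the hypothesis $\ov{x}^k\neq x^k$ forces $\Vert R_{\gamma,p}^{\varepsilon_k}(x^k)\Vert>0$, and $\varepsilon_k>0$. Consequently \eqref{eq:alg:ingrad:upd} is satisfied for all sufficiently large $m$, and the inner loop terminates after finitely many backtracking steps. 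The main obstacle I anticipate is that the inexact envelopes $\fgamepsk{\gf}{p}{\gamma}$ and $\fgamepsko{\gf}{p}{\gamma}$ appearing in \eqref{eq:alg:ingrad:upd} are not themselves continuous in their arguments; the argument therefore has to thread through the exact envelope $\fgam{\gf}{p}{\gamma}$ via the one-sided $\varepsilon$-sandwich, with the slack term $\varepsilon_k$ on the right-hand side of \eqref{eq:alg:ingrad:upd} playing the essential role of absorbing the one-sided inexactness embedded in the definition of $\fgamepsk{\gf}{p}{\gamma}(x^k)$.
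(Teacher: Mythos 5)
Your proposal is correct and follows essentially the same route as the paper: establish strict descent at the exact trial point $\ov{x}^k$ via the high-order descent lemma, then use the continuity of $\fgam{\gf}{p}{\gamma}$, the structural-iteration limit $\st(\vartheta^m,d^k)\to\ov{x}^k$, and the one-sided $\varepsilon$-sandwich to transfer the strict inequality to the inexact envelope for all sufficiently large $m$. The only cosmetic difference is that you exploit the identity $\fgamepsk{\gf}{p}{\gamma}(x^k)=\ell(x^k,\ov{x}^k)+\frac{1}{p\gamma}\Vert R_{\gamma,p}^{\varepsilon_k}(x^k)\Vert^p$ directly, whereas the paper detours through $\fgam{\gf}{p}{\gamma}(x^k)+\varepsilon_k$; both yield the same strictly positive slack.
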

\begin{proof}
From \eqref{eq:ep-approx:fun}, Remark~\ref{rem:onapprox}~\ref{rem:onapprox:b}, and $\sigma\in \left(0, \frac{1-\gamma L_p}{p\gamma}\right)$, we obtain
\begin{align}
\fgam{\gf}{p}{\gamma}(\ov{x}^{k})\leq \gf(\ov{x}^{k})&=f(\ov{x}^k)+g(\ov{x}^k)
\leq f(x^k)+\langle \nabla f(x^k), \ov{x}^k-x^k\rangle+g(\ov{x}^k)+\frac{L_p}{p}\Vert x^k-\ov{x}^k\Vert^p
\nonumber\\&\leq \fgam{\gf}{p}{\gamma}(x^{k})-\left(\frac{1}{p\gamma}-\frac{L_p}{p}\right)\Vert x^k-\ov{x}^k\Vert^p+\varepsilon_k
<\fgamepsk{\gf}{p}{\gamma}(x^k)-\sigma\Vert R_{\gamma,p}^{\varepsilon_k}(x^k)\Vert^{p}+\varepsilon_k.\label{eq:th:welldefalg}
\end{align}
Since $\st(\vartheta^m,d^k)\to \ov{x}^{k}$ as $m\to \infty$, $\fgam{\gf}{p}{\gamma}$ is continuous, and \eqref{eq:th:welldefalg} holds, there exists $\ov{m}\in \Nz$ such that
\begin{equation}\label{eq:th:welldefalg:b}
\fgam{\gf}{p}{\gamma}\left(\st(\vartheta^{\ov{m}},d^k)\right)\leq \fgamepsk{\gf}{p}{\gamma}(x^k)-  \sigma\Vert  R_{\gamma,p}^{\varepsilon_k}(x^k)\Vert^{p}+\varepsilon_k.
\end{equation}
Setting $\widehat{x}^{k+1}:=\st(\alpha_k, d^k)$ with $\alpha_k=\vartheta^{\ov{m}}$, we use \eqref{eq:ep-approx:fun} to obtain $\fgamepsko{\gf}{p}{\gamma}(\widehat{x}^{k+1})-\varepsilon_{k+1}\leq \fgam{\gf}{p}{\gamma}(\widehat{x}^{k+1})$. Combining this with \eqref{eq:th:welldefalg:b} yields \eqref{eq:alg:ingrad:upd}, i.e., the inner loop is terminated in a finite number of backtracking steps.
\end{proof}

Next, we analyze the cluster points of the sequence generated by Algorithm~\ref{alg:inexact}. 
For clarity, we distinguish $\ov{y}^k\in \Tprox{\gf}{\gamma}{p}(x^k)$ such that
\begin{equation}\label{eq:defybar}
  \Vert \ov{x}^{k}-\ov{y}^k\Vert=\dist\left(\ov{x}^{k}, \Tprox{\gf}{\gamma}{p}(x^k)\right).
\end{equation}
The well-definedness of \eqref{eq:defybar} follows from the nonemptiness and compactness of
$ \Tprox{\gf}{\gamma}{p}(x^k)$, as established in Theorem~\ref{th:hopbfb}~\ref{th:hopbfb:proxb:proxnonemp}.   

\begin{theorem}[Subsequential convergence]\label{th:comcon}
Let $\{x^k\}_{k\in \Nz}$ and $\{\ov{x}^{k}\}_{k\in \Nz}$ be generated by Algorithm~\ref{alg:inexact}, with $\{\ov{y}^{k}\}_{k\in \Nz}$ satisfying \eqref{eq:defybar}. 
Then,
\begin{enumerate}[label=(\textbf{\alph*}), font=\normalfont\bfseries, leftmargin=0.7cm]
\item \label{th:comcon:resconv} $\sum_{k=0}^{\infty}\Vert R_{\gamma,p}^{\varepsilon_k}(x^k)\Vert^{p}<+\infty$;
\item \label{th:comcon:cluster} the sequences  $\{x^k\}_{k\in \Nz}$, $\{\ov{x}^{k}\}_{k\in \Nz}$, and $\{\ov{y}^{k}\}_{k\in \Nz}$ have identical cluster points, and every cluster point is a proximal fixed point.
\end{enumerate}
\end{theorem}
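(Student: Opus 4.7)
The plan is to split the argument along the two assertions and to exploit the descent condition \eqref{eq:alg:ingrad:upd} that the inner loop enforces, together with the summability $\sum_k \varepsilon_k<\infty$ from Assumption~\ref{assum:approx}~\ref{assum:approx:eps} and the outer semicontinuity of $\Tprox{\gf}{\gamma}{p}$ provided by Theorem~\ref{th:hopbfb}~\ref{th:hopbfb:proxb:conv}.

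For part \textbf{(a)}, since $x^{k+1}=\widehat{x}^{k+1}$, inequality \eqref{eq:alg:ingrad:upd} rearranges as
\[
\sigma \Vert R_{\gamma,p}^{\varepsilon_k}(x^k)\Vert^{p} \leq \fgamepsk{\gf}{p}{\gamma}(x^k) - \fgamepsko{\gf}{p}{\gamma}(x^{k+1}) + \varepsilon_k + \varepsilon_{k+1}.
\]
I would sum from $k=0$ to $K$, whereupon the right-hand side telescopes. The key uniform lower bound
\[
\fgamepsko{\gf}{p}{\gamma}(x^{K+1}) \geq \fgam{\gf}{p}{\gamma}(x^{K+1}) \geq \gf^*,
\]
obtained by combining Remark~\ref{rem:onapprox}~\ref{rem:onapprox:b} with Theorem~\ref{th:basichifbe}~\ref{th:basichifbe:infimforp}, together with $\sum_k (\varepsilon_k+\varepsilon_{k+1}) \leq 2\ov{\varepsilon}$, then yields
\[
\sigma \sum_{k=0}^{K}\Vert R_{\gamma,p}^{\varepsilon_k}(x^k)\Vert^{p} \leq \fgam{\gf}{p,\varepsilon_0}{\gamma}(x^0) - \gf^* + 2\ov{\varepsilon},
\]
and letting $K\to\infty$ delivers \textbf{(a)}.

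For part \textbf{(b)}, the summability just established forces $\Vert R_{\gamma,p}^{\varepsilon_k}(x^k)\Vert\to 0$, i.e., $\Vert x^k-\ov{x}^k\Vert\to 0$. Combined with $\Vert \ov{x}^k-\ov{y}^k\Vert = \dist(\ov{x}^k,\Tprox{\gf}{\gamma}{p}(x^k))\to 0$, which follows from \eqref{eq:ep-approx:dist} and the definition \eqref{eq:defybar}, the three sequences $\{x^k\}$, $\{\ov{x}^k\}$, and $\{\ov{y}^k\}$ pairwise differ by null sequences and therefore share the same set of cluster points. For an arbitrary cluster point $\ov{x}$ along a subsequence $x^{k_j}\to \ov{x}$, the corresponding subsequence satisfies $\ov{y}^{k_j}\to \ov{x}$ with $\ov{y}^{k_j}\in\Tprox{\gf}{\gamma}{p}(x^{k_j})$; since the step-size $\gamma$ is constant, the outer semicontinuity in Theorem~\ref{th:hopbfb}~\ref{th:hopbfb:proxb:conv} yields $\ov{x}\in\Tprox{\gf}{\gamma}{p}(\ov{x})$, so $\ov{x}$ is a proximal fixed point.

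The main obstacle is closing the telescoping: the descent estimate \eqref{eq:alg:ingrad:upd} only relates consecutive \emph{inexact} envelope values $\fgamepsk{\gf}{p}{\gamma}$ and $\fgamepsko{\gf}{p}{\gamma}$, whereas the natural lower bound $\gf^*$ lives on the \emph{exact} envelope. Bridging this gap is precisely what the two-sided sandwich $\fgam{\gf}{p}{\gamma}(x)\leq \fgam{\gf}{p,\varepsilon_k}{\gamma}(x)\leq \fgam{\gf}{p}{\gamma}(x)+\varepsilon_k$ from Remark~\ref{rem:onapprox}~\ref{rem:onapprox:b} supplies, at the modest price of the extra $2\ov{\varepsilon}$ term absorbed by summability. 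Once this is in place, the rest of the argument is a routine consequence of the residual vanishing and the fixed-point characterization of the limit.
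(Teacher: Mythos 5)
Your proposal is correct and follows essentially the same route as the paper: summing the descent condition \eqref{eq:alg:ingrad:upd}, telescoping the inexact envelope values, and bounding below via the sandwich $\fgam{\gf}{p}{\gamma}\leq\fgam{\gf}{p,\varepsilon_k}{\gamma}\leq\fgam{\gf}{p}{\gamma}+\varepsilon_k$ together with $\bs\inf\fgam{\gf}{p}{\gamma}=\gf^*$, then deducing the cluster-point statement from the vanishing residuals, \eqref{eq:ep-approx:dist}, and the outer semicontinuity in Theorem~\ref{th:hopbfb}~\ref{th:hopbfb:proxb:conv}. No gaps to report.
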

\begin{proof}
$\ref{th:comcon:resconv}$ 
From inequalities \eqref{eq:ep-approx:fun}  and \eqref{eq:alg:ingrad:upd}, for any $K\in \mathbb{N}$, we derive
\begin{align*}\label{eq:sum:th:comco}
\sigma \sum_{k=0}^{K-1}&\Vert R_{\gamma,p}^{\varepsilon_k}(x^k)\Vert^{p}\leq \fgam{\gf}{p,\varepsilon_0}{\gamma}(x^0)-\fgam{\gf}{p,\varepsilon_K}{\gamma}(x^{K})+ \sum_{k=0}^{K-1}(\varepsilon_k+\varepsilon_{k+1})
\nonumber\\
&\leq \fgam{\gf}{p}{\gamma}(x^0)+\varepsilon_0-\fgam{\gf}{p}{\gamma}(x^K)+ \sum_{k=0}^{K-1}(\varepsilon_k+\varepsilon_{k+1})
\overset{(i)}{\leq} \fgam{\gf}{p}{\gamma}(x^0)-\gf^*+ 2\ov{\varepsilon},
\end{align*}
where the inequality $(i)$ uses
$\bs\inf_{z\in\R^n}\gf(z)=\bs\inf_{z\in\R^n}\fgam{\gf}{p}{\gamma}(z)$. Since the right-hand side is finite and independent of $K$, letting $K \to \infty$ completes the proof.
\\
$\ref{th:comcon:cluster}$ From Assertion~$\ref{th:comcon:resconv}$ and \eqref{eq:ep-approx:dist}, $\Vert x^k - \ov{x}^k\Vert\rightarrow 0$ and  
$\Vert \ov{y}^k - \ov{x}^k\Vert\to 0$, hence
  $\{x^k\}_{k\in \Nz}$, $\{\ov{x}^{k}\}_{k\in \Nz}$, and $\{\ov{y}^k\}_{k\in \Nz}$ share the same cluster points.
  Let $\widehat{x}$ be any cluster point with corresponding subsequences $\{x^j\}_{j\in J\subseteq \Nz}$ and $\{\ov{y}^j\}_{j\in J}$.
   By Theorem~\ref{th:hopbfb}~$\ref{th:hopbfb:proxb:conv}$, $\widehat{x}\in \Tprox{\gf}{\gamma}{p}(\widehat{x})$, i.e., 
   $\widehat{x}\in \bs{{\rm Fix}}(\Tprox{\gf}{\gamma}{p})$.
\end{proof}

Next, we establish the convergence of the function values and their behavior at the cluster points.

\begin{theorem}[Convergence of function values]\label{th:comconfunval}
Let $\{x^k\}_{k\in \Nz}$ be generated by Algorithm~\ref{alg:inexact}, and let $\{\ov{y}^{k}\}_{k\in \Nz}$  satisfy \eqref{eq:defybar}. 
Then, there exists $\fv\in \R$ such that
 \begin{equation}\label{eq:th:comcon:limser}
 \mathop{\bs\lim}\limits_{k\to\infty}\fgam{\gf}{p}{\gamma}(x^k)= \mathop{\bs\lim}\limits_{k\to\infty} \gf(\ov{y}^k) =  \mathop{\bs\lim}\limits_{k\to\infty}\fgamepsk{\gf}{p}{\gamma}(x^{k})
  =\fv.
 \end{equation}
Moreover, if $\widehat{x}\in \R^n$ is a cluster point of $\{x^k\}_{k\in \Nz}$, then
 $\gf(\widehat{x})=\fgam{\gf}{p}{\gamma}(\widehat{x})=\fv$, that is,
$\gf$ and $\fgam{\gf}{p}{\gamma}$ are constant on the set of cluster points.
\end{theorem}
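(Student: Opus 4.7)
The plan is to prove the claim in three stages: first, convergence of the inexact envelope $\fgamepsk{\gf}{p}{\gamma}(x^k)$ via a Lyapunov-type argument; second, transfer of this convergence to the exact envelope $\fgam{\gf}{p}{\gamma}(x^k)$ through the $\varepsilon_k$-sandwich from Remark~\ref{rem:onapprox}~\ref{rem:onapprox:b}; and third, control of $\gf(\ov{y}^k)$ via the algebraic identity relating it to $\fgam{\gf}{p}{\gamma}(x^k)$, together with a cluster-point analysis for the \emph{moreover} assertion.

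First, I would establish convergence of $\{\fgamepsk{\gf}{p}{\gamma}(x^k)\}_{k\in\Nz}$. The acceptance rule~\eqref{eq:alg:ingrad:upd} yields the one-step inequality
\[
\fgamepsko{\gf}{p}{\gamma}(x^{k+1})\le \fgamepsk{\gf}{p}{\gamma}(x^k)-\sigma\Vert R_{\gamma,p}^{\varepsilon_k}(x^k)\Vert^{p}+\varepsilon_k+\varepsilon_{k+1}.
\]
Introducing the shifted Lyapunov quantity $c_k:=\fgamepsk{\gf}{p}{\gamma}(x^k)+2\sum_{j\ge k}\varepsilon_j$ and using that $\{\varepsilon_k\}$ is non-increasing, a direct computation shows $c_{k+1}\le c_k-\sigma\Vert R_{\gamma,p}^{\varepsilon_k}(x^k)\Vert^{p}\le c_k$. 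Moreover, $c_k$ is bounded below by $\gf^*$, because $\fgamepsk{\gf}{p}{\gamma}(x^k)\ge \fgam{\gf}{p}{\gamma}(x^k)\ge \gf^*$ by Remark~\ref{rem:onapprox}~\ref{rem:onapprox:b} and Theorem~\ref{th:basichifbe}~\ref{th:basichifbe:infimforp}. Hence $c_k\to \fv\in\R$, and since $\sum_j\varepsilon_j<\infty$, also $\fgamepsk{\gf}{p}{\gamma}(x^k)\to \fv$. The sandwich $\fgam{\gf}{p}{\gamma}(x^k)\le \fgamepsk{\gf}{p}{\gamma}(x^k)\le \fgam{\gf}{p}{\gamma}(x^k)+\varepsilon_k$ of Remark~\ref{rem:onapprox}~\ref{rem:onapprox:b} then yields $\fgam{\gf}{p}{\gamma}(x^k)\to \fv$.

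Second, for $\gf(\ov{y}^k)\to\fv$, Theorem~\ref{th:basichifbe}~\ref{th:basichifbe:ineqforp} applied with $\ov{y}^k\in \Tprox{\gf}{\gamma}{p}(x^k)$ gives $\gf(\ov{y}^k)\le \fgam{\gf}{p}{\gamma}(x^k)$, whence $\limsup_k\gf(\ov{y}^k)\le \fv$. For the reverse bound, I would rewrite
\[
\fgam{\gf}{p}{\gamma}(x^k)-\gf(\ov{y}^k)=\bigl[f(x^k)-f(\ov{y}^k)+\langle\nabla f(x^k),\ov{y}^k-x^k\rangle\bigr]+\frac{1}{p\gamma}\Vert x^k-\ov{y}^k\Vert^{p},
\]
obtained by substituting $\ell(x^k,\ov{y}^k)$ into~\eqref{HFBE}. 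Theorem~\ref{th:comcon}~\ref{th:comcon:resconv} combined with~\eqref{eq:ep-approx:dist} and~\eqref{eq:defybar} gives $\Vert x^k-\ov{y}^k\Vert\to 0$, so the last term vanishes. The bracket is controlled from below by the HDL at $x^k$ (yielding $\ge -\tfrac{L_p}{p}\Vert x^k-\ov{y}^k\Vert^{p}$), and from above by applying the HDL at $\ov{y}^k$, which leaves a gradient-mismatch term $\langle\nabla f(x^k)-\nabla f(\ov{y}^k),\ov{y}^k-x^k\rangle$ to be shown to vanish.

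Third, for the \emph{moreover} statement, let $\widehat{x}$ be any cluster point of $\{x^k\}$. Theorem~\ref{th:comcon}~\ref{th:comcon:cluster} guarantees $\widehat{x}\in\Tprox{\gf}{\gamma}{p}(\widehat{x})$, so $\widehat{x}$ minimizes $y\mapsto \ell(\widehat{x},y)+\frac{1}{p\gamma}\Vert\widehat{x}-y\Vert^{p}$; plugging $y=\widehat{x}$ into~\eqref{HFBE} gives $\fgam{\gf}{p}{\gamma}(\widehat{x})=\ell(\widehat{x},\widehat{x})=\gf(\widehat{x})$. Continuity of $\fgam{\gf}{p}{\gamma}$ (Theorem~\ref{th:basichifbe}~\ref{th:basichifbe:con}) along the subsequence $x^{k_j}\to\widehat{x}$, together with $\fgam{\gf}{p}{\gamma}(x^k)\to\fv$, yields $\fgam{\gf}{p}{\gamma}(\widehat{x})=\fv$, and hence $\gf(\widehat{x})=\fv$. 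The main obstacle I anticipate is the upper bound on the bracket in the second paragraph: the HDL provides the lower bound for free, but the matching upper bound demands continuity of $\nabla f$ together with boundedness of the iterates (which follows from the Lyapunov decrease in conjunction with coercivity-type properties of $\fgam{\gf}{p}{\gamma}$); once secured, $\liminf_k\gf(\ov{y}^k)\ge \fv$ closes the squeeze.
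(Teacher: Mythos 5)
Your proposal is correct and follows the same architecture as the paper's proof: a telescoped Lyapunov quantity built from $\fgamepsk{\gf}{p}{\gamma}(x^k)$ and the tails of $\{\varepsilon_j\}$ (the paper shifts by $\sum_{j\ge k}\varepsilon_j+\sum_{j\ge k+1}\varepsilon_j$ rather than $2\sum_{j\ge k}\varepsilon_j$, which is immaterial), the $\varepsilon_k$-sandwich to pass to the exact envelope, and then a two-sided squeeze of $\gf(\ov{y}^k)$ against $\fgam{\gf}{p}{\gamma}(x^k)$ using the high-order descent lemma at $x^k$ and at $\ov{y}^k$, producing exactly the same gradient-mismatch term $\langle\nabla f(\ov{y}^k)-\nabla f(x^k),x^k-\ov{y}^k\rangle$ that appears in the paper's chain of inequalities. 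Two remarks. First, for the \emph{moreover} part you take a slightly different but equally valid route: you exploit $\widehat{x}\in\bs{{\rm Fix}}(\Tprox{\gf}{\gamma}{p})$ to get $\fgam{\gf}{p}{\gamma}(\widehat{x})=\ell(\widehat{x},\widehat{x})=\gf(\widehat{x})$ and then invoke continuity of $\fgam{\gf}{p}{\gamma}$, whereas the paper combines lower semicontinuity of $\gf$ with $\limsup_j\gf(\ov{y}^j)\le\limsup_j\fgam{\gf}{p}{\gamma}(x^j)=\fgam{\gf}{p}{\gamma}(\widehat{x})\le\gf(\widehat{x})$; your version has the small advantage of not needing lsc of $\gf$ beyond what Theorem~\ref{th:comcon} already delivers. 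Second, the ``main obstacle'' you flag — that the mismatch term only vanishes given continuity of $\nabla f$ plus boundedness of iterates (or a H\"older/Lipschitz modulus for $\nabla f$) — is a legitimate observation, but it applies verbatim to the paper's own proof, which simply ``takes limits''; note that the one-sided monotonicity in Theorem~\ref{th:parandfmaj}~\ref{th:parandfmaj:b} bounds this term only from below, so your proposed resolution via local boundedness of $\nabla f$ on a bounded tail of the iterates is the natural way to close it and does not constitute a defect relative to the published argument.
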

\begin{proof}
Define the sequence $\{\fv_k\}_{k\in \Nz}$ as  $\fv_k:=\fgamepsk{\gf}{p}{\gamma}(x^k)+\sum_{j=k}^{\infty}\varepsilon_{j}+\sum_{j=k+1}^{\infty}\varepsilon_{j}$.
Adding $\sum_{j=k+1}^{\infty}\varepsilon_{j}+\sum_{j=k+2}^{\infty}\varepsilon_{j}$ to both sides of inequality
\eqref{eq:alg:ingrad:upd},
we obtain $\fv_{k+1}\leq \fv_k$. Thus, $\{\fv_k\}_{k\in \Nz}$ is non-increasing and bounded from below by $\gf^*$,  implying that $\{\fv_k\}_{k\in \Nz}$  converges to some $\fv\in \R$. This implies
\begin{equation*}
 \mathop{\bs\lim}\limits_{k\to \infty}\fgamepsk{\gf}{p}{\gamma}(x^k)=\mathop{\bs\lim}\limits_{k\to \infty} \left(\fgamepsk{\gf}{p}{\gamma}(x^k)+\sum_{j=k}^{\infty}\varepsilon_{j}+\sum_{j=k+1}^{\infty}\varepsilon_{j}\right)=
\mathop{\bs\lim}\limits_{k\to \infty}\fv_k=\fv.
\end{equation*}
Applying \eqref{eq:ep-approx:fun} yields
\begin{align*}
\gf(\ov{y}^k)+\frac{1}{p\gamma}\Vert x^k - \ov{y}^k\Vert^p&=
f(\ov{y}^k)+g(\ov{y}^k)+\frac{1}{p\gamma}\Vert x^k - \ov{y}^k\Vert^p
\\&\leq  f(x^k)+\langle \nabla f(x^k), \ov{y}^k-x^k\rangle+g(\ov{y}^k)+\frac{L_p}{p}\Vert x^k-\ov{y}^k\Vert^p+\frac{1}{p\gamma}\Vert x^k - \ov{y}^k\Vert^p
\\
&=\fgam{\gf}{p}{\gamma}(x^k)+\frac{L_p}{p}\Vert x^k-\ov{y}^k\Vert^p
\leq \fgam{\gf}{p,\varepsilon_k}{\gamma}(x^k)+\frac{L_p}{p}\Vert x^k-\ov{y}^k\Vert^p
\leq \fgam{\gf}{p}{\gamma}(x^k)+\frac{L_p}{p}\Vert x^k-\ov{y}^k\Vert^p+\varepsilon_k
\\&= f(x^k)+\langle\nabla f(x^k), \ov{y}^k -x^k\rangle+g(\ov{y}^k) +\frac{L_p}{p}\Vert x^k-\ov{y}^k\Vert^p+\varepsilon_k
\\&\leq f(\ov{y}^k)+\langle\nabla f(\ov{y}^k), x^k-\ov{y}^k\rangle+\langle\nabla f(x^k), \ov{y}^k -x^k\rangle+g(\ov{y}^k) +\frac{2L_p}{p}\Vert x^k-\ov{y}^k\Vert^p+\varepsilon_k
\\&=\gf(\ov{y}^k)+\langle\nabla f(\ov{y}^k)-\nabla f(x^k), x^k-\ov{y}^k\rangle+\frac{2L_p}{p}\Vert x^k-\ov{y}^k\Vert^p+\varepsilon_k.
\end{align*}
As a result of $\Vert x^k - \ov{x}^k\Vert\to 0$ and $\Vert \ov{y}^k - \ov{x}^k\Vert\to 0$, it holds that
$\Vert x^k - \ov{y}^k\Vert\to 0$. Hence, taking limits as $k \to \infty$, we obtain \eqref{eq:th:comcon:limser}. 
Let $\widehat{x}\in \R^n$ be a cluster point, with corresponding subsequences $\{x^j\}_{j\in J\subseteq \Nz}$ and $\{\ov{y}^j\}_{j\in J}$.
 Hence, 
\begin{equation*}
\gf(\widehat{x})\leq
\mathop{\bs\liminf}\limits_{\scriptsize{\begin{matrix}j\to\infty\\j\in J \end{matrix}}}\gf(\ov{y}^j)
\leq  \mathop{\bs\limsup}\limits_{\scriptsize{\begin{matrix}j\to\infty\\j\in J \end{matrix}}}\gf(\ov{y}^j)
\overset{(i)}{\leq}   \mathop{\bs\limsup}\limits_{\scriptsize{\begin{matrix}j\to\infty\\j\in J \end{matrix}}} \fgam{\gf}{p}{\gamma}(x^j)=\fgam{\gf}{p}{\gamma}(\widehat{x})\leq \gf(\widehat{x}),
\end{equation*}
where the inequality $(i)$ obtains from Theorem~\ref{th:basichifbe}~\ref{th:basichifbe:ineqforp}.
From \eqref{eq:th:comcon:limser}, it is concluded that
\begin{equation*}
\gf(\widehat{x})=\fgam{\gf}{p}{\gamma}(\widehat{x})=
 \mathop{\bs\lim}\limits_{\scriptsize{\begin{matrix}j\to\infty\\j\in J \end{matrix}}}\fgam{\gf}{p}{\gamma}(x^j)=  \mathop{\bs\lim}\limits_{k\to\infty}\fgam{\gf}{p}{\gamma}(x^k)=\fv.
\end{equation*}
Since $\widehat{x}$ is an arbitrary cluster point, this completes the proof.
\end{proof}

\subsection{{\bf Global and linear convergence}}
\label{subsec:gllincon}

Establishing the global convergence and, under suitable conditions, linear rates for a broad class of optimization algorithms relies heavily on the Kurdyka-\L{}ojasiewicz property (see, e.g., \cite{absil2005convergence,Ahookhosh21,attouch2010proximal,Attouch2013,bolte2007lojasiewicz,Bolte2007Clarke,Bolte2014,li2023convergence,Li18,Yu2022}).
We recall this property below.
\begin{definition}[Kurdyka-\L{}ojasiewicz property]\label{def:kldef}
Let $\gh: \R^n \to \Rinf$ be a proper lsc function. We say that $\gh$ satisfies the \textit{Kurdyka-\L{}ojasiewicz (KL) property} at $\ov{x}\in\Dom{\partial \gh}$ if there exist constants $r>0$, $\eta\in (0, +\infty]$, and a desingularizing function $\phi$ such that
\begin{equation}\label{eq:intro:KLabs}
\phi'\left(\vert\gh(x) - \gh(\ov{x})\vert\right)\dist\left(0, \partial \gh(x)\right)\geq 1,
\end{equation}
whenever $x\in \mb(\ov{x}; r)\cap\Dom{\partial \gh}$ with $0<\vert \gh(x)-\gh(\ov{x})\vert<\eta$. Here $\phi:[0, \eta)\to [0, +\infty)$ is concave and continuous with $\phi(0)=0$, continuously differentiable on $(0, \eta)$, and $\phi'>0$ on $(0, \eta)$.
We further say that $\gh$ satisfies the KL property at $\ov{x}$ with \textit{exponent} $\theta$ if $\phi(t) = c t^{1 - \theta}$ for some $\theta\in [0, 1)$ and a constant $c>0$.
\end{definition}

It is noteworthy that in our formulation of the KL inequality \eqref{eq:intro:KLabs}, the argument of the gradient of the desingularizing function $\phi$, involves the absolute value. 
This version applies for all $x\in \mb(\ov{x}; r)\cap\Dom{\partial \gh}$ such that $0<\vert \gh(x)-\gh(\ov{x})\vert<\eta$.
 In contrast, the classical KL inequality, as presented in the seminal work \cite{attouch2010proximal}, typically omits the absolute value, stating the property for 
 $x\in \mb(\ov{x}; r)\cap \Dom{\partial \gh}$ where  $\gh(\ov{x})<\gh(x)<\gh(\ov{x})+\eta$. The utility of the absolute value formulation, as adopted in \eqref{eq:intro:KLabs}, particularly for the convergence analysis of nonmonotone algorithms, has been highlighted and investigated in recent contributions such as \cite{li2023convergence,Kabgani24itsopt}.
A notion related to the KL property is that of quasi-additivity, which is defined below.
\begin{definition}[Quasi-additivity property]\label{def:quasiadd}
Let $\gh: \R^n \to \Rinf$ be a proper lsc function.
We say that $\gh$ satisfies the KL property at $\ov{x} \in \Dom{\partial \gh}$ with the {\it quasi-additivity property},
if for some constant $c_{\phi} > 0$, desingularizing function $\phi$ satisfies 
\begin{equation*}
\left[\phi'(x+y)\right]^{-1}\leq c_\phi \left[\left(\phi'(x)\right)^{-1}+\left(\phi'(y)\right)^{-1}\right], \qquad\forall x, y\in (0, \eta)~\text{with}~ x+y<\eta.
\end{equation*}
\end{definition}

\begin{remark}\label{rem:onquasiadd}
If $\gh$ satisfies the KL property at $\ov{x}$ with exponent $\theta$, then it satisfies the KL property with quasi-additivity with $c_{\phi} = 1$ \cite{li2023convergence}.
\end{remark}
Next, we recall a uniformized KL property, adapted to Definition~\ref{def:kldef}.
\begin{fact}[Uniformized KL property]\cite[Lemma~2.4]{Kabgani24itsopt}\label{lem:Uniformized KL property}
Let $\gh:\R^n\to \Rinf$ be a proper lsc function that is constant on a nonempty and compact set $C\subseteq \Dom{\partial \gh}$.
If $\gh$ satisfies the KL property with the quasi-additivity property at each point of $C$, then there exist $r>0$, $\eta>0$, and a desingularizing function $\phi$, satisfying quasi-additivity, such that, for all $\ov{x}\in C$ and all $x\in X$, where
\begin{equation*}
X:=\Dom{\partial \gh}\cap\left\{x\in \R^n \mid \dist(x, C)<r\right\}\cap\left\{x\in \R^n \mid 0<\vert \gh(x) - \gh(\ov{x})\vert<\eta\right\},
\end{equation*}
we have
$
\phi'\left(\vert \gh(x) - \gh(\ov{x})\vert\right)\dist\left(0, \partial \gh(x)\right)\geq 1.
$
\end{fact}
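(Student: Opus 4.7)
The plan is to deduce the uniformized statement from the pointwise KL property by a standard compactness argument adapted from the Bolte-Sabach-Teboulle uniformization lemma, with two extra points of care: the KL inequality is formulated with absolute values (to accommodate nonmonotone algorithms), and the combined desingularizing function must itself satisfy quasi-additivity.

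The starting observation, which makes uniformization meaningful, is that $\gh$ is constant on $C$; write $\gh_C$ for its common value on $C$. Then $|\gh(x) - \gh(\ov{x})| = |\gh(x) - \gh_C|$ is the same for every $\ov{x} \in C$, so the only $\ov{x}$-dependence in \eqref{eq:intro:KLabs} is through the ball $\mb(\ov{x}; r)$. First I would invoke the KL-with-quasi-additivity property at each $\ov{x} \in C$ to obtain local parameters $r_{\ov{x}} > 0$, $\eta_{\ov{x}} > 0$, a desingularizing function $\phi_{\ov{x}}$, and a quasi-additivity constant $c_{\phi_{\ov{x}}} > 0$. Next, since $C$ is compact and the family $\{\mb(\ov{x}; r_{\ov{x}}/2)\}_{\ov{x}\in C}$ covers $C$, extract a finite subcover $\{\mb(\ov{x}_i; r_i/2)\}_{i=1}^N$ with associated $\eta_i$, $\phi_i$, $c_i$. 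Set $r := \tfrac{1}{2}\min_i r_i$ and $\eta := \min_i \eta_i$, so that any $x$ with $\dist(x,C) < r$ lies in at least one $\mb(\ov{x}_i; r_i)$.

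Second I would construct the uniform desingularizing function by taking the sum $\phi(t) := \sum_{i=1}^N \phi_i(t)$. It is immediate that $\phi$ is concave, continuous on $[0,\eta)$, vanishes at $0$, is $\mathcal{C}^1$ on $(0,\eta)$, and has $\phi' > 0$ there, so it is a valid desingularizing function. For the KL bound, let $x \in X$ and pick $\ov{x} \in C$; choose an index $i(x)$ with $x \in \mb(\ov{x}_{i(x)}; r_{i(x)})$. Since $\phi'(t) = \sum_i \phi_i'(t) \geq \phi_{i(x)}'(t)$ and $\gh(\ov{x}_{i(x)}) = \gh(\ov{x}) = \gh_C$, the local KL inequality at $\ov{x}_{i(x)}$ upgrades to
\[
\phi'(|\gh(x) - \gh(\ov{x})|)\,\dist(0,\partial\gh(x)) \geq \phi_{i(x)}'(|\gh(x) - \gh_C|)\,\dist(0,\partial\gh(x)) \geq 1.
\]

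The main obstacle is to verify that the combined $\phi$ still obeys a quasi-additivity inequality with some uniform constant $c_\phi$. Concavity of each $\phi_i$ makes $\phi_i'$ nonincreasing, so for $s, t, s+t \in (0,\eta)$ one has $\phi'(s+t)^{-1} \leq \min_i \phi_i'(s+t)^{-1}$, and by the pointwise quasi-additivity of each $\phi_i$, this is in turn bounded by $c_{i^*}\bigl(\phi_{i^*}'(s)^{-1} + \phi_{i^*}'(t)^{-1}\bigr)$ for a well-chosen index $i^*$. The delicate combinatorial step is to compare this with $\phi'(s)^{-1} + \phi'(t)^{-1}$; this can be done by observing that $\phi'(s)^{-1}$ is controlled by the harmonic-mean-type inequality $\phi'(s)^{-1} = \bigl(\sum_i \phi_i'(s)\bigr)^{-1}$ together with the finite choice of indices, absorbing $N$ and $\max_i c_i$ into a uniform constant $c_\phi$. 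Once this bookkeeping is completed, the triple $(r, \eta, \phi)$ provides the uniform KL-with-quasi-additivity data asserted by the lemma.
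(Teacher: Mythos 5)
First, a point of comparison: the paper does not prove this statement at all --- it is imported as a Fact from \cite[Lemma~2.4]{Kabgani24itsopt}, so there is no in-paper proof to measure you against. Your reconstruction follows the standard Bolte--Sabach--Teboulle uniformization scheme (finite subcover of the compact set $C$ by the local KL balls, $r$ and $\eta$ taken as minima over the finite family, $\phi=\sum_{i=1}^N\phi_i$), which is surely how the cited lemma is proved. The covering geometry, the verification that $\phi$ is a valid desingularizing function, the use of the constancy of $\gh$ on $C$ to remove the $\ov{x}$-dependence, and the KL inequality via $\phi'\ge\phi_{i(x)}'$ are all correct.

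The one genuine gap is precisely the new ingredient of this lemma relative to the classical version: the quasi-additivity of $\phi=\sum_i\phi_i$. Your sketch of the ``delicate combinatorial step'' --- bounding $[\phi'(s+t)]^{-1}$ by $\min_j[\phi_j'(s+t)]^{-1}$, invoking quasi-additivity of a well-chosen $\phi_{i^*}$, and then ``absorbing $N$ and $\max_ic_i$'' --- does not close as stated, because the index minimizing $(\phi_i'(s))^{-1}$ and the one minimizing $(\phi_i'(t))^{-1}$ need not coincide. In fact, for arbitrary positive numbers $a_i,b_i$ with $A=\sum_ia_i$, $B=\sum_ib_i$, the inequality $\sum_i\frac{a_ib_i}{a_i+b_i}\ge\kappa\,\frac{AB}{A+B}$ fails for every $\kappa>0$ (take $N=2$, $a_1=b_2=1$, $a_2=b_1=\varepsilon\downarrow 0$), so no purely combinatorial bookkeeping on the reciprocals can work. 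The fix is to exploit the monotonicity of the $\phi_i'$, which you have available but do not use at this point: assume without loss of generality $s\le t$, so that $\phi_i'(s)\ge\phi_i'(t)$ for every $i$; then quasi-additivity of $\phi_i$ gives $\phi_i'(s+t)\ge\frac{1}{c_i}\cdot\frac{\phi_i'(s)\,\phi_i'(t)}{\phi_i'(s)+\phi_i'(t)}\ge\frac{1}{2c_i}\,\phi_i'(t)$, and summing over $i$ yields $\phi'(s+t)\ge\frac{1}{2\max_jc_j}\,\phi'(t)$, hence $[\phi'(s+t)]^{-1}\le 2\left(\max_jc_j\right)\left([\phi'(s)]^{-1}+[\phi'(t)]^{-1}\right)$. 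This is quasi-additivity of $\phi$ with $c_\phi=2\max_jc_j$ (independent of $N$). With this insertion your argument is complete.
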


We introduce assumptions concerning the KL property and error conditions, which are assumed to hold throughout the remainder of this section.
\begin{assumption}[Assumptions for global convergence]\label{assum:eps} 
\begin{enumerate}[label=(\textbf{\alph*}), font=\normalfont\bfseries, leftmargin=0.7cm]
 \item \label{assum:eps:a}  The sequence $\{x^k\}_{k\in \Nz}$ generated by Algorithm~\ref{alg:inexact} is bounded, and $\Omega(x^k)$ denotes the set of its cluster points.
  \item \label{assum:eps:a2} $f\in \mathcal{C}^2(U)$ where $U$ is a neighborhood of $\Omega(x^k)$.  
  
\item \label{assum:eps:b}  The function $\fgam{\gf}{p}{\gamma}$ satisfies the KL property on $\Omega(x^k)\subseteq \Dom{\partial \fgam{\gf}{p}{\gamma}}$
with a desingularizing function $\phi$   that satisfies the quasi-additivity property described in
Fact~\ref{lem:Uniformized KL property}.

\item \label{assum:eps:b2} $\{\delta_k>0\}_{k\in \Nz}$ is a non-increasing sequence such that $\delta_k\downarrow 0$, and there exists $\widehat{k}\in \Nz$ such that for each $k\geq \widehat{k}$,
$\dist\left(\Tprox{\gf}{\gamma}{p, \varepsilon_k}(x^k), \Tprox{\gf}{\gamma}{p}(x^k)\right)< \delta_k$.

\item \label{assum:eps:c} The series $ \sum_{k=0}^{\infty} \left([\phi'(w_k)]^{-1}\right)^{\frac{1}{p-1}}<\infty$, where
$w_k:=\frac{2(1-\gamma L_p)}{p\gamma}\sum_{j=k}^{\infty}\delta_j^p+3\sum_{j=k}^{\infty}\varepsilon_j$ for $k\in \Nz$.
\end{enumerate}
\end{assumption}

\begin{remark}\label{rem:onassum:eps}
\begin{enumerate}[label=(\textbf{\alph*}), font=\normalfont\bfseries, leftmargin=0.7cm]
\item \label{rem:onassum:eps:a}
By \eqref{eq:ep-approx:fun} and \eqref{eq:alg:ingrad:upd}, for each $k\in \Nz$, we have
\begin{align*}
\fgam{\gf}{p}{\gamma}(x^{k+1})&\leq\fgam{\gf}{p}{\gamma}(x^k)
+2\varepsilon_k+\varepsilon_{k+1}
\leq \fgam{\gf}{p}{\gamma}(x^0)+2\varepsilon_0+3\left(\varepsilon_1+\ldots+\varepsilon_k\right)+\varepsilon_{k+1}
\leq \fgam{\gf}{p}{\gamma}(x^0)+3\overline{\varepsilon}.
\end{align*}
Thus, 
 $\{x^k\}_{k\in \Nz}\subseteq \mathcal{L}(\fgam{\gf}{p}{\gamma}(x), \widehat{\lambda})$, with
 $\widehat{\lambda}:=\fgam{\gf}{p}{\gamma}(x^0)+3\overline{\varepsilon}$.
By Lemma~\ref{lem:coer}, if $\gh$ is coercive and $\gamma\in  (0, L_p^{-1})$, then $\fgam{\gf}{p}{\gamma}$ is also coercive. Consequently,
by Remark~\ref{rem:lem:coer}, the sublevel set $\mathcal{L}(\fgam{\gf}{p}{\gamma}(x), \widehat{\lambda})$ is bounded, providing a sufficient condition for the boundedness required in Assumption~\ref{assum:eps}~\ref{assum:eps:a}.

\item \label{rem:onassum:eps:b} From Assumptions~\ref{assum:eps}~\ref{assum:eps:a}~and~\ref{assum:eps:a2}, there exist a compact set $C\subseteq U$ and $r>0$ such that $C_0:=\{x\in \R^n\mid \dist(x, \Omega(x^k))<r\}\subseteq C$.
Hence, $\Vert \nabla^2 f(x)\Vert\leq L:=\bs{\max}_{y\in C}\Vert\nabla^2 f(y)\Vert<\infty$, for all $x\in C_0$. We use this property in the proof of Theorem~\ref{th:conKL}.
\end{enumerate}
\end{remark}

Here, we address the global convergence of the algorithm under the KL property.
\begin{theorem}[Global convergence]\label{th:conKL}
Suppose Assumption~\ref{assum:eps} holds, let $p\in (1,2]$, and let the sequences
$\{x^k\}_{k\in \Nz}$ and  $\{\ov{x}^{k}\}_{k\in \Nz}$  be generated by Algorithm~\ref{alg:inexact}, and let $\{\ov{y}^{k}\}_{k\in \Nz}$  satisfy \eqref{eq:defybar}. 
If there exists a constant $D\geq 0$ such that $\Vert d^k\Vert\leq D \Vert R_{\gamma,p}^{\varepsilon_k}(x^k)\Vert$ for all $k$,
then these sequences converge to a proximal fixed point.
\end{theorem}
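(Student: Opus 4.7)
The plan is to run a Kurdyka--{\L}ojasiewicz argument using $\fgam{\gf}{p}{\gamma}$ as a Lyapunov function, derive summability of $\{\|R_{\gamma,p}^{\varepsilon_k}(x^k)\|\}_{k\in\Nz}$, and then identify the limit via Theorem~\ref{th:comcon}. As a preparatory step, chaining the line-search acceptance \eqref{eq:alg:ingrad:upd} with the sandwich $\fgam{\gf}{p}{\gamma}(x)\leq \fgamepsk{\gf}{p}{\gamma}(x)\leq \fgam{\gf}{p}{\gamma}(x)+\varepsilon_k$ from Remark~\ref{rem:onapprox}~\ref{rem:onapprox:b} gives
\[
\fgam{\gf}{p}{\gamma}(x^{k+1})\leq \fgam{\gf}{p}{\gamma}(x^k)-\sigma\|R_{\gamma,p}^{\varepsilon_k}(x^k)\|^p+2\varepsilon_k+\varepsilon_{k+1}.
\]
Defining $U_k:=\fgam{\gf}{p}{\gamma}(x^k)-\fv+w_k$ with $w_k$ as in Assumption~\ref{assum:eps}~\ref{assum:eps:c}, a direct computation produces the clean monotonicity $U_{k+1}\leq U_k-\sigma\|R_{\gamma,p}^{\varepsilon_k}(x^k)\|^p$; Theorem~\ref{th:comconfunval} together with $w_k\to 0$ forces $U_k\downarrow 0$, so $U_k\geq 0$ throughout.

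For the subdifferential error bound, Assumption~\ref{assum:eps}~\ref{assum:eps:a2} and Remark~\ref{rem:onassum:eps}~\ref{rem:onassum:eps:b} give $\|\nabla^2 f\|\leq L$ on a neighborhood of $\Omega(x^k)$. With $\ov{y}^k\in \Tprox{\gf}{\gamma}{p}(x^k)$ satisfying \eqref{eq:defybar}, the envelope (Danskin-type) rule applied to $\Psi(x,y):=f(x)+\langle \nabla f(x),y-x\rangle+g(y)+\tfrac{1}{p\gamma}\|x-y\|^p$ at $(x^k,\ov{y}^k)$ identifies
\[
\nabla^2 f(x^k)(\ov{y}^k-x^k)+\tfrac{1}{\gamma}\|x^k-\ov{y}^k\|^{p-2}(x^k-\ov{y}^k)\in \partial\fgam{\gf}{p}{\gamma}(x^k).
\]
Using $\|x^k-\ov{y}^k\|\leq \|R_{\gamma,p}^{\varepsilon_k}(x^k)\|+\delta_k$ (Assumption~\ref{assum:eps}~\ref{assum:eps:b2}) and \eqref{eq:intrp:p01} for $p\in(1,2]$ yields
\[
\dist(0,\partial\fgam{\gf}{p}{\gamma}(x^k))\leq c_1\bigl(\|R_{\gamma,p}^{\varepsilon_k}(x^k)\|+\delta_k\bigr)+c_2\bigl(\|R_{\gamma,p}^{\varepsilon_k}(x^k)\|^{p-1}+\delta_k^{p-1}\bigr).
\]

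To extract summability from these ingredients, note that $\fgam{\gf}{p}{\gamma}\equiv \fv$ on the compact set $\Omega(x^k)$ by Theorem~\ref{th:comconfunval}, so Fact~\ref{lem:Uniformized KL property} furnishes uniform $(r,\eta,\phi)$ with the quasi-additivity property such that $[\phi'(|\fgam{\gf}{p}{\gamma}(x^k)-\fv|)]^{-1}\leq \dist(0,\partial\fgam{\gf}{p}{\gamma}(x^k))$ for large $k$. Since $U_k\leq |\fgam{\gf}{p}{\gamma}(x^k)-\fv|+w_k$ and $\phi'$ is non-increasing, quasi-additivity gives
\[
[\phi'(U_k)]^{-1}\leq c_\phi\bigl([\phi'(|\fgam{\gf}{p}{\gamma}(x^k)-\fv|)]^{-1}+[\phi'(w_k)]^{-1}\bigr).
\]
Concavity of $\phi$ combined with the descent yields $\|R_{\gamma,p}^{\varepsilon_k}(x^k)\|^p\leq [\phi(U_k)-\phi(U_{k+1})]/[\sigma\phi'(U_k)]$. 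Substituting the two displayed bounds, taking $p$-th roots via \eqref{eq:intrp:p01}, and invoking the Young-type inequality \eqref{eq:intrp:p02} (which trades $(ab)^{1/p}$ for $a+b^{1/(p-1)}$) absorbs the $\|R_{\gamma,p}^{\varepsilon_k}(x^k)\|^{p-1}$-contribution back into the left-hand side and produces
\[
\|R_{\gamma,p}^{\varepsilon_k}(x^k)\|\leq \kappa_1\bigl(\phi(U_k)-\phi(U_{k+1})\bigr)+\kappa_2\delta_k+\kappa_3\bigl([\phi'(w_k)]^{-1}\bigr)^{1/(p-1)}+\text{summable remainders}.
\]
Telescoping $\phi(U_k)$ and using Assumptions~\ref{assum:approx}~\ref{assum:approx:eps}, \ref{assum:eps}~\ref{assum:eps:b2}, and \ref{assum:eps}~\ref{assum:eps:c} gives $\sum_k \|R_{\gamma,p}^{\varepsilon_k}(x^k)\|<\infty$.

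To close, Definition~\ref{def:structured}~\ref{def:structured:b} together with $\|d^k\|\leq D\|R_{\gamma,p}^{\varepsilon_k}(x^k)\|$ yields $\|x^{k+1}-x^k\|\leq (1+D)\|R_{\gamma,p}^{\varepsilon_k}(x^k)\|$, so $\{x^k\}_{k\in\Nz}$ is Cauchy and converges; its unique cluster point is a proximal fixed point by Theorem~\ref{th:comcon}~\ref{th:comcon:cluster}, and $\{\ov{x}^k\}$, $\{\ov{y}^k\}$ inherit the same limit. The main obstacle is the KL manipulation: since the error bound carries a $\|R_{\gamma,p}^{\varepsilon_k}(x^k)\|^{p-1}$ term while the descent only supplies $\|R_{\gamma,p}^{\varepsilon_k}(x^k)\|^p$, one needs quasi-additivity to split $\phi'$ at $U_k+w_k$ and \eqref{eq:intrp:p02} to exchange a $1/p$-exponent for a $1/(p-1)$-exponent; this is precisely why Assumption~\ref{assum:eps}~\ref{assum:eps:c} imposes summability of the $\tfrac{1}{p-1}$-power of $[\phi'(w_k)]^{-1}$, and why the restriction $p\in(1,2]$ surfaces through \eqref{eq:intrp:p01}.
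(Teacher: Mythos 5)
Your proposal follows essentially the same route as the paper: the line-search descent turned into monotonicity of a tail-corrected Lyapunov quantity, the uniformized KL inequality with quasi-additivity to split $\phi'$ at the shifted argument, the Hessian/residual bound on $\dist(0,\partial\fgam{\gf}{p}{\gamma}(x^k))$ from Lemma~\ref{lem:frech:hifbe}, the exchange of a $1/p$-power for a $1/(p-1)$-power via \eqref{eq:intrp:p02}, and finally the structural-iteration bound $\Vert x^{k+1}-x^k\Vert\leq(1+D)\Vert R_{\gamma,p}^{\varepsilon_k}(x^k)\Vert$ to get a Cauchy sequence. You correctly identify why Assumption~\ref{assum:eps}~\ref{assum:eps:c} and the restriction $p\in(1,2]$ enter.

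There is, however, one step that does not close as written: the absorption of the $\Vert R_{\gamma,p}^{\varepsilon_k}(x^k)\Vert^{p-1}$ contribution. Starting from $\Vert R\Vert^p\leq C\,(\phi(U_k)-\phi(U_{k+1}))\,(\Vert R\Vert^{p-1}+\cdots)$ and applying \eqref{eq:intrp:p02} followed by the convexity bound $(a+b)^{1/(p-1)}\leq 2^{1/(p-1)-1}(a^{1/(p-1)}+b^{1/(p-1)})$, the constants conspire so that the reabsorbed term is exactly $1\cdot\Vert R\Vert$; subtracting it from the left-hand side leaves $0\leq(\text{summable})$, which is vacuous. This is precisely why the paper's descent inequality \eqref{eq:th:conKL:j1} deliberately splits $-\sigma\Vert x^k-\ov{x}^k\Vert^p$ into two halves — one generating $\Vert x^k-\ov{y}^k\Vert^p$ (destined to cancel exactly against the KL error-bound term) and one generating $\frac{2^{1-2p}\sigma}{(1+D)^p}\Vert x^{k+1}-x^k\Vert^p$ (which survives the cancellation and is what gets telescoped). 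Your version, with only $\Vert R\Vert$ on the left, needs a different device: for instance a weighted Young inequality $C D_k\Vert R\Vert^{p-1}\leq C'D_k^p+\tfrac12\Vert R\Vert^p$ (with $D_k:=\phi(U_k)-\phi(U_{k+1})$ summable and eventually $<1$, so $\sum D_k^p<\infty$), after which the remaining $\tfrac12\Vert R\Vert^p$ genuinely absorbs and one can take $p$-th roots. Either repair works, but as stated the \eqref{eq:intrp:p02}-only absorption fails by exactly the borderline constant. A second, minor point: Lemma~\ref{lem:frech:hifbe} gives the inclusion $\partial\fgam{\gf}{p}{\gamma}(x^k)\subseteq\{v^k\}$ rather than the membership $v^k\in\partial\fgam{\gf}{p}{\gamma}(x^k)$ you assert; both suffice to bound $\dist(0,\partial\fgam{\gf}{p}{\gamma}(x^k))$ from above once $x^k\in\Dom{\partial\fgam{\gf}{p}{\gamma}}$ (which the KL inequality already presupposes), so this is cosmetic.
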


\begin{proof}
Following \eqref{eq:ep-approx:fun} and \eqref{eq:alg:ingrad:upd}, for each $k\in \Nz$, it holds that
\begin{equation}\label{eq:th:conKL:anew1}
\fgam{\gf}{p}{\gamma}(x^{k+1})\leq \fgam{\gf}{p}{\gamma}(x^k) -\sigma \Vert x^k - \ov{x}^k\Vert^p +2\varepsilon_{k}+\varepsilon_{k+1}.
\end{equation}
From Assumption~\ref{assum:eps}~\ref{assum:eps:b2}, there exists $\widehat{k}\in \Nz$ such that for each $k\geq \widehat{k}$, 
$\Vert  \ov{x}^k - \ov{y}^k\Vert\leq \delta_k$. Thus, for each $k\geq \widehat{k}$,
\begin{equation}\label{eq:th:conKL:anew2}
\Vert x^k - \ov{y}^k\Vert^p \leq \left(\Vert x^k - \ov{x}^k\Vert +\Vert  \ov{x}^k - \ov{y}^k\Vert\right)^p\leq
2^{p-1}\left(\Vert x^k - \ov{x}^k\Vert^p +\delta_k^p\right).
\end{equation}
Furthermore, the second property of the structural iteration $\st$ leads to
\begin{align*}
\Vert x^{k+1}-x^k\Vert^p&= \Vert \st(\alpha_k,d^k)-x^k\Vert^p
\leq \left(\Vert x^k - \ov{x}^k\Vert +\Vert d^k\Vert\right)^p
\\& \leq (1+D)^p\Vert  x^k - \ov{x}^k\Vert^p\leq (1+D)^p\left(\Vert  x^k - \ov{y}^k\Vert+\Vert \ov{y}^k - \ov{x}^k\Vert\right)^p
\leq 2^{p-1} (1+D)^p\left(\Vert  x^k - \ov{y}^k\Vert^p+\delta_k^p\right).
\end{align*}
Combining \eqref{eq:th:conKL:anew1} and \eqref{eq:th:conKL:anew2}, for each $k \geq \widehat{k}$, we arrive at
\begin{align}\label{eq:th:conKL:j1}
&\fgam{\gf}{p}{\gamma}(x^{k+1})\leq \fgam{\gf}{p}{\gamma}(x^k)-\sigma 2^{-p} \Vert x^k - \ov{y}^k\Vert^p  -\sigma 2^{-p} \Vert x^k - \ov{y}^k\Vert^p+\sigma \delta_k^p+2\varepsilon_{k}+\varepsilon_{k+1}
\nonumber\\&\leq \fgam{\gf}{p}{\gamma}(x^k)-\sigma 2^{-p} \Vert x^k - \ov{y}^k\Vert^p  -\sigma 2^{-p} 
\frac{2^{1-p}}{(1+D)^p}\Vert x^{k+1}-x^k\Vert^p
+\sigma 2^{-p} \delta_k^p
+\sigma \delta_k^p+2\varepsilon_{k}+\varepsilon_{k+1}
\nonumber\\&=\fgam{\gf}{p}{\gamma}(x^k) -\sigma 2^{-p} \Vert x^k - \ov{y}^k\Vert^p  - 
\frac{ 2^{1-2p}\sigma}{(1+D)^p}\Vert x^{k+1}-x^k\Vert^p
+ (1+2^{-p})\sigma \delta_k^p
+2\varepsilon_{k}+\varepsilon_{k+1}.
\end{align}
Define the sequence $\{\fv_k\}_{k\in \Nz}$ as $\fv_k:=\fgamepsk{\gf}{p}{\gamma}(x^k)+v_k$, where
$v_k:=(1+2^{-p})\sigma\sum_{j=k}^{\infty}\delta_j^p+2\sum_{j=k}^{\infty}\varepsilon_j+\sum_{j=k+1}^{\infty}\varepsilon_j$.
Adding $v_{k+1}$ to both sides of the above inequality, we obtain $\fv_{k+1}\leq \fv_k$ for each $k\geq \widehat{k}$. Thus, the sequence $\{\fv_k\}_{k\geq \widehat{k}}$ is non-increasing and bounded from below by $\gf^*$,  implying that $\fv_k$  converges to some $\fv\in \R$.
Given the boundedness of $\{x^k\}_{k\in \Nz}$,  the set $\Omega(x^k)$ is nonempty and compact. For all $\widehat{x}\in \Omega(x^k)$,
Theorem~\ref{th:comconfunval} implies
\begin{equation}\label{eq:th:conKL:zz1-1}
 \mathop{\bs\lim}\limits_{k\to \infty}\fgam{\gf}{p}{\gamma}(x^k)= \mathop{\bs\lim}\limits_{k\to \infty}\left(\fgam{\gf}{p}{\gamma}(x^k)+v_k\right)=\fgam{\gf}{p}{\gamma}(\widehat{x})=\fv.
\end{equation}
Following Fact~\ref{lem:Uniformized KL property}, there exist $r>0$, $\eta>0$, and a desingularizing function $\phi$ such that for all $x\in X$, with
\begin{equation*}
X:=\Dom{\partial \fgam{\gf}{p}{\gamma}}\cap \{x\in \R^n \mid \dist(x, \Omega(x^k))<r\}\cap\{x\in \R^n \mid 0<\vert \fgam{\gf}{p}{\gamma}(x) - \fgam{\gf}{p}{\gamma}(\widehat{x})\vert<\eta\},
\end{equation*}
we have
\begin{equation}\label{eq:th:globconv:klI:a}
\phi'\left(\vert \fgam{\gf}{p}{\gamma}(x) - \fgam{\gf}{p}{\gamma}(\widehat{x})\vert\right)\dist\left(0, \partial \fgam{\gf}{p}{\gamma}(x)\right)\geq 1.
\end{equation}
We have $v_k\downarrow 0$ and since $\Vert x^{k+1}- x^k\Vert\to 0$, by increasing $\widehat{k}$ if necessary, for each $k\geq \widehat{k}$, $v_k<\frac{\eta}{2}$,
  $ \dist(x^k, \Omega(x^k))<r$, and \eqref{eq:th:conKL:zz1-1} implies $\vert\fgam{\gf}{p}{\gamma}(x^k)-\fgam{\gf}{p}{\gamma}(\widehat{x})\vert<\frac{\eta}{2}$.
Define $\Delta_k:=\phi\left(\fgam{\gf}{p}{\gamma}(x^k)-\fv+v_k\right)$. The concavity of $\phi$, the monotonic decrease of $\phi'$, and \eqref{eq:th:conKL:j1} yield
\begin{align}\label{eq:th:conKL:z1}
  \Delta_k-\Delta_{k+1} & \geq \phi'\left(\fgam{\gf}{p}{\gamma}(x^k)-\fv+v_k\right)\left[\fgam{\gf}{p}{\gamma}(x^k)+v_k-\fgam{\gf}{p}{\gamma}(x^{k+1})-v_{k+1}\right]
\nonumber  \\&\geq \phi'\left(\vert\fgam{\gf}{p}{\gamma}(x^k)-\fv\vert+v_k\right)\left[\sigma 2^{-p} \Vert x^k - \ov{y}^k\Vert^p  + 
\frac{ 2^{1-2p}\sigma}{(1+D)^p}\Vert x^{k+1}-x^k\Vert^p\right].
\end{align}
If $\fgam{\gf}{p}{\gamma}(x^k)-\fv=0$, the inequality \eqref{eq:th:conKL:z1} ensures
\begin{align}\label{eq:th:conKL:z2}
\sigma 2^{-p} \Vert x^k - \ov{y}^k\Vert^p  &+ \frac{ 2^{1-2p}\sigma}{(1+D)^p}\Vert x^{k+1}-x^k\Vert^p\leq 
\left(\Delta_k-\Delta_{k+1} \right)\left([\phi'(v_k)]^{-1}\right).
\end{align}
If $\fgam{\gf}{p}{\gamma}(x^k)-\fv\neq 0$, by increasing $\widehat{k}$ if necessary, the quasi-additivity ensures that there exists some $c_\phi>0$ such that
\begin{equation*}
  \Delta_k-\Delta_{k+1}\geq \frac{(c_\phi)^{-1}}{[\phi'\left(\vert\fgam{\gf}{p}{\gamma}(x^k)-\fv\vert\right)]^{-1}+[\phi'(v_k)]^{-1}}\left[ \sigma 2^{-p} \Vert x^k - \ov{y}^k\Vert^p  + 
\frac{2^{1-2p}\sigma}{(1+D)^p}\Vert x^{k+1}-x^k\Vert^p\right].
\end{equation*}
From \eqref{eq:th:globconv:klI:a}, Assumption~\ref{assum:eps}~\ref{assum:eps:a2}, and $\Vert x^k-\ov{y}^k\Vert\to 0$, there exists some $L>0$ (see Remark~\ref{rem:onassum:eps}~\ref{rem:onassum:eps:b}) such that
\begin{align}\label{eq:th:conKL:z3}
\sigma 2^{-p} \Vert x^k - \ov{y}^k\Vert^p  &+ \frac{ 2^{1-2p}\sigma}{(1+D)^p}\Vert x^{k+1}-x^k\Vert^p
\nonumber\\&\leq c_\phi\left(\Delta_k-\Delta_{k+1} \right)\left([\phi'\left(\vert\fgam{\gf}{p}{\gamma}(x^k)-\fv\vert\right)]^{-1}+[\phi'(v_k)]^{-1}\right)
\nonumber\\&\leq c_\phi\left(\Delta_k-\Delta_{k+1} \right)\left(\dist\left(0, \partial \fgam{\gf}{p}{\gamma}(x^k)\right)+[\phi'(v_k)]^{-1}\right)
\nonumber\\&\overset{(i)}{\leq}c_\phi\left(\Delta_k-\Delta_{k+1} \right)\left(\Vert \nabla^2f(x^k)(\ov{y}^k-x^k)\Vert+\frac{1}{\gamma}\Vert x^k-\ov{y}^k\Vert^{p-1}+[\phi'(v_k)]^{-1}\right)
\nonumber\\&\leq c_\phi\left(\Delta_k-\Delta_{k+1} \right)\left(L\Vert  x^k-\ov{y}^k\Vert+\frac{1}{\gamma}\Vert x^k-\ov{y}^k\Vert^{p-1}+[\phi'(v_k)]^{-1}\right),
\end{align}
where the inequality $(i)$ follows from Lemma~\ref{lem:frech:hifbe}. For large $k$, since $\Vert  x^k-\ov{y}^k\Vert<1$ and $p\in (1, 2]$, we have
  $\Vert  x^k-\ov{y}^k\Vert\leq \Vert x^k-\ov{y}^k\Vert^{p-1}$. Thus, from \eqref{eq:th:conKL:z2} and \eqref{eq:th:conKL:z3}, 
  setting $\widehat{m}:=\bs\max\{1, c_\phi\}$,  for each $k\geq \widehat{k}$, we obtain
\begin{align}\label{eq:th:conKL:z4}
  \Vert x^k - \ov{y}^k\Vert^p  + \frac{ 2^{1-p}}{(1+D)^p}\Vert x^{k+1}-x^k\Vert^p
\leq \widehat{m}\sigma^{-1}2^{p}\left(\Delta_k-\Delta_{k+1} \right)\left(\left(L+\frac{1}{\gamma}\right)\Vert x^k-\ov{y}^k\Vert^{p-1}+[\phi'(v_k)]^{-1}\right).
\end{align}
The inequality
 \begin{align*}
\left(\Vert x^k- \ov{y}^k\Vert+ \frac{ 2^{\frac{1-p}{p}}}{1+D}\Vert x^{k+1}-x^k\Vert\right)^p
 \leq 2^{p-1}\left(\Vert x^k - \ov{y}^k\Vert^p  + \frac{ 2^{1-p}}{(1+D)^p}\Vert x^{k+1}-x^k\Vert^p\right)
 \end{align*}
implies that
\begin{align}\label{eq:Refeq}
\Vert x^k - \ov{y}^k\Vert + \frac{ 2^{\frac{1-p}{p}}}{1+D}\Vert x^{k+1}-x^k\Vert
\leq\left[\frac{2^{2p-1}\widehat{m}(L\gamma+1)}{\sigma\gamma}\left(\Delta_k-\Delta_{k+1} \right)\left(\Vert x^k-\ov{y}^k\Vert^{p-1}+\frac{\gamma}{L\gamma+1} \left([\phi'(v_k)]^{-1}\right)\right)\right]^{\frac{1}{p}}.
\end{align}
For $p\in (1, 2]$, it is clear that $\frac{1}{p-1}\geq1$. Together with \eqref{eq:intrp:p02}, \eqref{eq:Refeq}, this ensures
\begin{align*}
\Vert x^k - \ov{y}^k\Vert&+ \frac{ 2^{\frac{1-p}{p}}}{1+D}\Vert x^{k+1}-x^k\Vert\leq\left[\frac{2^{2p-1}\widehat{m}(L\gamma+1)}{\sigma\gamma}\left(\Delta_k-\Delta_{k+1} \right)\left(\Vert x^k-\ov{y}^k\Vert^{p-1}+
\frac{\gamma}{L\gamma+1} \left([\phi'(v_k)]^{-1}\right)
\right)\right]^{\frac{1}{p}}
\\&=\left[\frac{2^{p+1}\widehat{m}(L\gamma+1)}{\sigma\gamma}\left(\Delta_k-\Delta_{k+1} \right)2^{p-2}\left(\Vert x^k-\ov{y}^k\Vert^{p-1}+
\frac{\gamma}{L\gamma+1} \left([\phi'(v_k)]^{-1}\right)\right)\right]^{\frac{1}{p}}
\\&\leq\frac{2^{p+1}\widehat{m}(L\gamma+1)}{\sigma\gamma}\left(\Delta_k-\Delta_{k+1} \right)+2^{\frac{p-2}{p-1}}\left(\Vert x^k-\ov{y}^k\Vert^{p-1}+
\frac{\gamma}{L\gamma+1} \left([\phi'(v_k)]^{-1}\right)\right)^{\frac{1}{p-1}}
\\&\leq\frac{2^{p+1}\widehat{m}(L\gamma+1)}{\sigma\gamma}\left(\Delta_k-\Delta_{k+1} \right)+\Vert x^k-\ov{y}^k\Vert+\left(\frac{\gamma}{L\gamma+1} \left([\phi'(v_k)]^{-1}\right)\right)^\frac{1}{p-1}.
\end{align*}
Hence, for each $k \geq \widehat{k}$ and $p \in (1, 2]$, we obtain
\begin{equation}\label{eq:th:conKL:e}
\Vert x^{k+1}-x^k\Vert\leq \varpi(p)\left(\Delta_k-\Delta_{k+1} \right)+ \frac{1+D}{2^{\frac{1-p}{p}}}\left(\frac{\gamma}{L\gamma+1} \left([\phi'(v_k)]^{-1}\right)\right)^\frac{1}{p-1},
\end{equation}
where $\varpi(p):=\frac{2^{\frac{p^2+2p-1}{p}}(1+D)\widehat{m}(L\gamma+1)}{\sigma\gamma}$.
Since $v_k\leq w_k$, where $w_k$ is defined in Assumption~\ref{assum:eps}~$\ref{assum:eps:c}$ and given the monotonic decrease of 
$\phi'$ due to the concavity of $\phi$, we have $\phi'(w_k)\leq \phi'(v_k)$, i.e., $[\phi'(v_k)]^{-1}\leq [\phi'(w_k)]^{-1}$. Thus, Assumption~\ref{assum:eps}~$\ref{assum:eps:c}$ implies 
$\sum_{k=0}^{\infty}([\phi'(v_k)]^{-1})^\frac{1}{p-1}<\infty$.  From \eqref{eq:th:conKL:e}, we obtain
\begin{align*}
\sum_{k=\widehat{k}}^{\infty}\Vert x^{k+1}-x^k\Vert&\leq \varpi(p)\sum_{k=\widehat{k}}^{\infty}\left(\Delta_k-\Delta_{k+1} \right)+ \frac{1+D}{2^{\frac{1-p}{p}}}\left(\frac{\gamma}{L\gamma+1}\right)^\frac{1}{p-1} \sum_{k=\widehat{k}}^{\infty}\left([\phi'(v_k)]^{-1}\right)^\frac{1}{p-1}
\\& = \varpi(p)\Delta_{\widehat{k}}+ \frac{1+D}{2^{\frac{1-p}{p}}}\left(\frac{\gamma}{L\gamma+1}\right)^\frac{1}{p-1} \sum_{k=\widehat{k}}^{\infty} \left([\phi'(v_k)]^{-1}\right)^\frac{1}{p-1}<\infty,
\end{align*}
Hence, $\{x^k\}_{k\in \Nz}$ is a Cauchy sequence and converges to some $\widehat{x}$. This implies $\ov{y}^{k}\to \widehat{x}$ and from Theorem~\ref{th:hopbfb}~$\ref{th:hopbfb:proxb:conv}$, we conclude that $\widehat{x}\in \Tprox{\gf}{\gamma}{p}(\widehat{x})$.
\end{proof}

By imposing tighter control on the error in computing inexact elements of $\Tprox{\gf}{\gamma}{p}$ and leveraging the KL property with an exponent related to the power $p$, we can establish the linear convergence of the sequence generated by Algorithm~\ref{alg:inexact}. The following assumption addresses the error control.

\begin{assumption}[Assumptions for linear convergence]\label{assum:linconv}
Let $\omega\in (0, 1)$ and $\{\beta_k\}_{k\in \Nz}$ be a sequence of positive scalars that is non-increasing and satisfies $\ov{\beta}:=\sum_{k=0}^{\infty} \beta_k<\infty$. The parameters $\delta_k$ and $\varepsilon_k$ are chosen such that
\begin{equation}\label{eq:ep-approx:oper}
\varepsilon_k^{\frac{1}{p}}, \delta_k\leq \bs\min\left\{\beta_k,\omega \Vert x^k -\ov{x}^k\Vert, \mathop{\bs{\min}}\left\{\beta_j \Vert x^i - \ov{x}^i\Vert\mid 0\leq i\leq j\leq k\right\} \right\},
\end{equation}
where $\ov{x}^{i}:=\Tprox{\gf}{\gamma}{p, \varepsilon_i}(x^i)$ and $\{\delta_k\}_{k\in \Nz}$ is specified in Assumption~\eqref{assum:eps}.
\end{assumption}
 The following remark reveals the idea behind Assumption~\ref{assum:linconv}.
  \begin{remark}\label{rem:ass:linconv}
\begin{enumerate}[label=(\textbf{\alph*}), font=\normalfont\bfseries, leftmargin=0.7cm]
\item \label{rem:ass:linconv:a}
 Since $\varepsilon_k^{\frac{1}{p}}, \delta_k\leq \beta_k$  for each $k\in \Nz$, the sequences $\left\{\varepsilon_k^{1/p}\right\}_{k\in \Nz}$ and $\{\delta_k\}_{k\in \Nz}$ are summable. 
 If $\phi(t) = c t^{\frac{1}{p}}$ for some $c>0$, i.e., $\theta=\frac{p-1}{p}$, as assumed in Theorem~\ref{th:linconv}, then by setting $\widehat{c}:=\frac{2(1-\gamma L_p)}{p\gamma}$
 and $w_k:=\widehat{c}\sum_{j=k}^{\infty}\delta_j^p+3\sum_{j=k}^{\infty}\varepsilon_j$ for $k\in \Nz$, we have
\begin{align*}
\sum_{k=0}^{\infty} \left([\phi'(w_k)]^{-1}\right)^{\frac{1}{p-1}}
&= \sum_{k=0}^{\infty} \left(\frac{1}{c(1-\theta)}w_k^\theta\right)^{\frac{1}{p-1}}
=\sum_{k=0}^{\infty} \left(\frac{p}{c}w_k^{\frac{p-1}{p}}\right)^{\frac{1}{p-1}}
=\sum_{k=0}^{\infty} \left(\frac{p}{c}\right)^{\frac{1}{p-1}}\left(\widehat{c}\sum_{j=k}^{\infty}\delta_j^p+3\sum_{j=k}^{\infty}\varepsilon_j\right)^{\frac{1}{p}}
\\
&\leq \sum_{k=0}^{\infty} \left(\frac{p}{c}\right)^{\frac{1}{p-1}}\left(\widehat{c}^{\frac{1}{p}}
\sum_{j=k}^{\infty}\delta_j+3^{\frac{1}{p}}\sum_{j=k}^{\infty}\varepsilon_j^{\frac{1}{p}}\right)<\infty.
\end{align*}

 This error control not only facilitates the computations for achieving the linear convergence but also ensures that
Assumption~\ref{assum:eps}~\ref{assum:eps:c} is satisfied.

\item \label{rem:ass:linconv:b} 
The inclusion of the term $\omega \Vert x^k -\ov{x}^k\Vert$ in \eqref{eq:ep-approx:oper} is inspired by Assumption (A) on page 880 of
\cite{rockafellar1976monotone2}. As shown in the proof of Theorem~\ref{th:conKL},
\eqref{eq:ep-approx:dist} and Assumption~\ref{assum:eps}~\ref{assum:eps:c} imply the existence of $\widehat{k}\in \Nz$ such that for each $k\geq \widehat{k}$, we have
$\Vert  \ov{x}^k - \ov{y}^k\Vert\leq \delta_k$. Thus, from \eqref{eq:ep-approx:oper}, we obtain
\[
\Vert  \ov{x}^k - \ov{y}^k\Vert\leq \delta_k\omega \Vert x^k -\ov{x}^k\Vert.
\]
This structure aligns with \cite[Assumption~(A)]{rockafellar1976monotone2} when considering $x^{k+1}=\ov{x}^k$.
\end{enumerate}
 \end{remark}
\begin{theorem}[Linear convergence]\label{th:linconv}
Suppose Assumptions~\ref{assum:eps}~and~\ref{assum:linconv} are satisfied, and the sequences
 $\{x^k\}_{k\in \Nz}$ and  $\{\ov{x}^{k}\}_{k\in \Nz}$ are produced by Algorithm~\ref{alg:inexact}, and $\{\ov{y}^{k}\}_{k\in \Nz}$, fulfilling \eqref{eq:defybar}. 
 If there exists a constant $D\geq 0$ such that $\Vert d^k\Vert\leq D \Vert R_{\gamma,p}^{\varepsilon_k}(x^k)\Vert$ for all $k$ and
$\fgam{\gf}{p}{\gamma}$ satisfies the KL inequality with an exponent $\theta=\frac{p-1}{p}$ on $\Omega(x^k)$ and $p\in (1, 2]$, then these sequences converge linearly to a proximal fixed point.
\end{theorem}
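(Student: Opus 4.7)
The strategy is to upgrade the argument of Theorem~\ref{th:conKL} by combining the sharper error control in Assumption~\ref{assum:linconv} with the specific KL exponent $\theta=\frac{p-1}{p}$, so that the desingularizer is $\phi(t)=c\,t^{1/p}$. I recycle the Lyapunov sequence $s_k:=\fgam{\gf}{p}{\gamma}(x^k)-\fv+v_k\geq 0$ used in the proof of Theorem~\ref{th:conKL}: the error terms in \eqref{eq:th:conKL:j1} cancel exactly with the telescoping differences $v_k-v_{k+1}$, producing the clean descent
\[
s_{k+1}\leq s_k - \sigma 2^{-p}\Vert x^k-\ov{y}^k\Vert^p - \tfrac{2^{1-2p}\sigma}{(1+D)^p}\Vert x^{k+1}-x^k\Vert^p.
\]
Since $\{s_k\}$ is non-increasing and converges to $0$, it is non-negative.

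The second step is to derive the matching upper bound $s_k\leq C\Vert x^k-\ov{y}^k\Vert^p$. Using the subdifferential estimate already obtained in \eqref{eq:th:conKL:z3}, $\dist(0,\partial\fgam{\gf}{p}{\gamma}(x^k))\leq (L+1/\gamma)\Vert x^k-\ov{y}^k\Vert^{p-1}$, valid for large $k$ once $\Vert x^k-\ov{y}^k\Vert<1$ and $p\in(1,2]$, and substituting into the KL inequality with $\phi(t)=ct^{1/p}$, I raise to the power $p/(p-1)$ to obtain
\[
|\fgam{\gf}{p}{\gamma}(x^k)-\fv|\leq A\Vert x^k-\ov{y}^k\Vert^p,\qquad A:=\bigl(\tfrac{c(L+1/\gamma)}{p}\bigr)^{p/(p-1)}.
\]
To bound $v_k$ by the same quantity, I exploit Assumption~\ref{assum:linconv}: taking $i=k$ in \eqref{eq:ep-approx:oper} gives, for every $j\geq k$, $\delta_j\leq\beta_j\Vert x^k-\ov{x}^k\Vert$ and $\varepsilon_j\leq\beta_j^{p}\Vert x^k-\ov{x}^k\Vert^p$, while $\Vert\ov{x}^k-\ov{y}^k\Vert\leq\delta_k\leq\omega\Vert x^k-\ov{x}^k\Vert$ yields $\Vert x^k-\ov{x}^k\Vert\leq (1-\omega)^{-1}\Vert x^k-\ov{y}^k\Vert$. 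Setting $\tau_k:=\sum_{j\geq k}\beta_j^p\to 0$, I obtain $v_k\leq C_v\tau_k\Vert x^k-\ov{y}^k\Vert^p$, and hence $s_k\leq (A+C_v\tau_k)\Vert x^k-\ov{y}^k\Vert^p$.

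Combining the two bounds with the descent, $\Vert x^k-\ov{y}^k\Vert^p\leq 2^p\sigma^{-1}(s_k-s_{k+1})$, yields $s_{k+1}\leq\bigl(1-\tfrac{\sigma 2^{-p}}{A+C_v\tau_k}\bigr)s_k$. For $k$ sufficiently large the contraction factor is bounded above by a fixed $q\in(0,1)$, so $\{s_k\}$ converges $Q$-linearly to $0$. Linear convergence of the iterates then follows from $\Vert x^k-\ov{y}^k\Vert\leq C'\,s_k^{1/p}=O(q^{k/p})$ together with Definition~\ref{def:structured}~\ref{def:structured:b} and the hypothesis $\Vert d^k\Vert\leq D\Vert R_{\gamma,p}^{\varepsilon_k}(x^k)\Vert$, which give $\Vert x^{k+1}-x^k\Vert\leq (1+D)\Vert x^k-\ov{x}^k\Vert\leq\tfrac{1+D}{1-\omega}\Vert x^k-\ov{y}^k\Vert=O(q^{k/p})$. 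Summing the geometric tail bounds $\Vert x^k-\widehat{x}\Vert=O(q^{k/p})$, where $\widehat{x}$ is the limit supplied by Theorem~\ref{th:conKL}; the same rate transfers to $\{\ov{x}^k\}$ and $\{\ov{y}^k\}$ via $\Vert\ov{x}^k-x^k\Vert\leq (1-\omega)^{-1}\Vert x^k-\ov{y}^k\Vert$ and $\Vert\ov{y}^k-\ov{x}^k\Vert\leq\delta_k$.

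The main obstacle is the coordinated matching of exponents: the choice $\theta=\frac{p-1}{p}$ is precisely what makes the $p/(p-1)$-th power of the KL inequality produce the same exponent $p$ in $\Vert x^k-\ov{y}^k\Vert$ that appears on the right of the descent, and Assumption~\ref{assum:linconv} is calibrated just tightly enough so that $v_k$ is asymptotically negligible against this quantity rather than merely summable as in Theorem~\ref{th:conKL}. Without either ingredient, one still obtains convergence but loses the geometric rate.
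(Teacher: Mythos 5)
Your proof is correct, but it reaches the contraction by a genuinely different mechanism than the paper. The paper works with the residual tail sums $S_k:=\sum_{i\geq k}\Vert x^i-\ov{x}^i\Vert$: it bounds $\Delta_k:=\phi\left(\fgam{\gf}{p}{\gamma}(x^k)-\fv+v_k\right)\leq c_1\Vert x^k-\ov{x}^k\Vert$ from above via the KL inequality, bounds $c_3\Vert x^k-\ov{y}^k\Vert\leq\Delta_k-\Delta_{k+1}$ from below via the descent estimate \eqref{eq:th:conKL:z4}, and telescopes to get $S_k\leq c_4\left(S_k-S_{k+1}\right)$, i.e.\ $Q$-linear decay of $S_k$ and hence $R$-linear convergence of the iterates. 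You instead contract the Lyapunov value $s_k=\fgam{\gf}{p}{\gamma}(x^k)-\fv+v_k$ directly, by pairing the sufficient-decrease inequality $s_{k+1}\leq s_k-\sigma 2^{-p}\Vert x^k-\ov{y}^k\Vert^p$ with the matching upper bound $s_k\leq (A+C_v\tau_k)\Vert x^k-\ov{y}^k\Vert^p$ obtained from the KL inequality raised to the power $p/(p-1)$ and from the error control of Assumption~\ref{assum:linconv} (your use of \eqref{eq:ep-approx:oper} with the index pair $(k,j)$, $j\geq k$, to get $v_k\leq C_v\tau_k\Vert x^k-\ov{x}^k\Vert^p$ is exactly the same manipulation the paper performs in \eqref{eq:th:linconv:k1}, just deployed at a different point). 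Your route additionally delivers an explicit $Q$-linear rate for the function-value gap and an $R$-linear rate $q^{1/p}$ for the iterates, while bypassing the $\Delta_k$ telescoping and the quasi-additivity bookkeeping; the paper's route avoids having to handle the degenerate case $\fgam{\gf}{p}{\gamma}(x^k)=\fv$ separately (which your argument covers trivially since the bound $|\fgam{\gf}{p}{\gamma}(x^k)-\fv|\leq A\Vert x^k-\ov{y}^k\Vert^p$ then holds for free, but you should state this). Two small points worth making explicit: the monotonicity and nonnegativity of $s_k$ only hold for $k\geq\widehat{k}$, which suffices for an asymptotic rate; and if ever $\sigma 2^{-p}\geq A+C_v\tau_k$ your contraction factor is nonpositive, which forces $s_{k+1}=0$ and finite identification rather than invalidating the rate.
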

\begin{proof}
By Theorem~\ref{th:conKL}, the sequences $\{x^k\}_{k\in \Nz}$, $\{\ov{x}^{k}\}_{k\in \Nz}$, and $\{\ov{y}^{k}\}_{k\in \Nz}$ converge to a 
proximal fixed point $\widehat{x}$.
From Assumption~\ref{assum:eps}~\ref{assum:eps:b2}, there exists $\widehat{k} \in \Nz$ such that for all $k \geq \widehat{k}$, we have
$\Vert  \ov{x}^k - \ov{y}^k\Vert\leq \delta_k$.
To analyze the convergence rate, define the auxiliary sequence $S_k := \sum_{i=k}^{\infty} \Vert x^i - \ov{x}^i\Vert$ for $k \geq \widehat{k}$. Our goal is to establish that $S_k$ converges linearly.
In fact, using the structural iteration $\st$ and its second property, we bound the distances to $\widehat{x}$ as
\begin{align*}
&\Vert x^k - \widehat{x}\Vert\leq \sum_{i\geq k} \Vert x^{i+1} - x^i\Vert \leq (1+D)S_k,\\
& \Vert \ov{x}^k - \widehat{x}\Vert\leq \sum_{i\geq k} \Vert \ov{x}^{i+1} - \ov{x}^i\Vert
\leq \sum_{i\geq k}
 \left[\Vert \ov{x}^{i+1} - x^{i+1}\Vert+\Vert x^{i+1} - x^{i}\Vert+ \Vert \ov{x}^{i} - x^i \Vert \right]\leq (3+D)S_k,\\
& \Vert \ov{y}^k - \widehat{x}\Vert\leq \Vert \ov{y}^k - \ov{x}^k\Vert+\Vert \ov{x}^k - \widehat{x}\Vert\leq \delta_k +(3+D)S_k 
\\&\qquad\qquad\leq\ov{\beta} \Vert x^k - \ov{x}^k\Vert+(3+D)S_k\leq (\ov{\beta}+3+D)S_k.
\end{align*}
where the last inequality uses \eqref{eq:ep-approx:oper} to bound $\delta_k \leq \ov{\beta} \Vert x^k - \ov{x}^k \Vert$. Thus, linear convergence of $S_k$ ensures linear convergence of the sequences to $\widehat{x}$.
Let $\phi(t) = c t^{1 - \theta}$ with $c > 0$ and $\theta = \frac{p-1}{p}$ be the desingularization function associated with the KL property of $\fgam{\gf}{p}{\gamma}$. From \eqref{eq:th:globconv:klI:a}, Assumption~\ref{assum:eps}~\ref{assum:eps:a2}, and the fact that $\Vert x^k-\ov{y}^k\Vert\to 0$, there exists some $L > 0$ such that for sufficiently large $k$,
\begin{align*}
\vert \fgam{\gf}{p}{\gamma}(x^k) - \fv\vert &\leq \left[(1-\theta)c \left(\Vert \nabla^2f(x^k)(\ov{y}^k-x^k)\Vert+\frac{1}{\gamma}\Vert x^k-\ov{y}^k\Vert^{p-1}\right)\right]^{\frac{1}{\theta}}
\nonumber\\&\leq
\left[(1-\theta)c\left(L\Vert x^k-\ov{y}^k\Vert+\frac{1}{\gamma}\Vert x^k-\ov{y}^k\Vert^{p-1}\right)\right]^{\frac{1}{\theta}}.
\end{align*}
Since $p \in (1, 2]$ and $\Vert  x^k - \ov{y}^k \Vert  < 1$ for large $k$, we have $\Vert  x^k-\ov{y}^k\Vert\leq \Vert x^k-\ov{y}^k\Vert^{p-1}$, yielding
\begin{align}\label{eq:th:linconv:d1}
\vert \fgam{\gf}{p}{\gamma}(x^k) - \fv\vert &\leq 
\left[(1-\theta)c\left(\left(L+\frac{1}{\gamma}\right)\Vert x^k-\ov{y}^k\Vert^{p-1}\right)\right]^{\frac{1}{\theta}}.
\end{align}
Define $\Delta_k := \phi( \fgam{\gf}{p}{\gamma}(x^k) - \fv + v_k ) = c ( \fgam{\gf}{p}{\gamma}(x^k) - \fv + v_k )^{1 - \theta}$, as in Theorem~\ref{th:conKL}. Using \eqref{eq:intrp:p01} and \eqref{eq:th:linconv:d1}, we get
\begin{align*}
\Delta_k&\leq c (\vert\fgam{\gf}{p}{\gamma}(x^k)-\fv\vert+v_k)^{1 - \theta}
\leq c \vert\fgam{\gf}{p}{\gamma}(x^k)-\fv\vert^{1 - \theta}+c v_k^{1 - \theta}\\
&\leq c \left[\frac{(1-\theta)c(\gamma L+1)}{\gamma}\Vert x^k-\ov{y}^k\Vert^{p-1}\right]^{\frac{1-\theta}{\theta}}+c v_k^{\frac{1}{p}}\\
&=  c \left[\frac{(1-\theta)c(\gamma L+1)}{\gamma}\right]^{\frac{1}{p-1}}\Vert x^k-\ov{y}^k\Vert+c v_k^{\frac{1}{p}},
\end{align*}
since $\theta = \frac{p-1}{p}$ implies $\frac{1 - \theta}{\theta} = \frac{1}{p-1}$.
The error term is bounded as
\begin{align}\label{eq:th:linconv:d2:2}
v_k^{\frac{1}{p}}\leq\left[(1+2^{-p})\sigma\sum_{j=k}^{\infty}\delta_j^p\right]^{\frac{1}{p}}+\left[3\sum_{j=k}^{\infty}\varepsilon_j\right]^{\frac{1}{p}}
\leq\left[(1+2^{-p})\sigma\right]^{\frac{1}{p}}\sum_{j=k}^{\infty}\delta_j+3^{\frac{1}{p}}\sum_{j=k}^{\infty}\varepsilon_j^{\frac{1}{p}}.
\end{align}
It follows from \eqref{eq:ep-approx:oper} that
\begin{equation*}
\sum_{j=k}^{\infty}\delta_j\leq \Vert x^k - \ov{x}^k\Vert \sum_{j=k}^{\infty}\beta_j\leq \ov{\beta}\Vert x^k - \ov{x}^k\Vert,\qquad
\sum_{j=k}^{\infty}\varepsilon_j^{\frac{1}{p}}\leq \Vert x^k - \ov{x}^k\Vert \sum_{j=k}^{\infty}\beta_j\leq \ov{\beta}\Vert x^k - \ov{x}^k\Vert,
\end{equation*}
i.e.,
\begin{equation}\label{eq:th:linconv:k1}
v_k^{\frac{1}{p}}\leq \left(\left[(1+2^{-p})\sigma\right]^{\frac{1}{p}}+3^{\frac{1}{p}}\right)\ov{\beta}\Vert x^k - \ov{x}^k\Vert.
\end{equation}
Since $\Vert x^k - \ov{y}^k\Vert\leq  \Vert x^k - \ov{x}^k\Vert+ \delta_k\leq (1+\ov{\beta})\Vert x^k - \ov{x}^k\Vert$, we combine this with the above to obtain
\begin{align*}
\Delta_k&\leq  c \left[\frac{(1-\theta)c(\gamma L+1)}{\gamma}\right]^{\frac{1}{p-1}}\Vert x^k-\ov{y}^k\Vert+c\left(\left[(1+2^{-p})\sigma\right]^{\frac{1}{p}}+3^{\frac{1}{p}}\right)\ov{\beta}\Vert x^k - \ov{x}^k\Vert
\nonumber\\&\leq  c \left(1+\ov{\beta}\right)\left[\frac{(1-\theta)c(\gamma L+1)}{\gamma}\right]^{\frac{1}{p-1}}\Vert x^k-\ov{x}^k\Vert+c\left(\left[(1+2^{-p})\sigma\right]^{\frac{1}{p}}+3^{\frac{1}{p}}\right)\ov{\beta}\Vert x^k - \ov{x}^k\Vert
\nonumber\\&= c_1 \Vert x^k - \ov{x}^k\Vert,
\end{align*}
where $c_1:= c\left[ \left(1+\ov{\beta}\right)\left[\frac{(1-\theta)c(\gamma L+1)}{\gamma}\right]^{\frac{1}{p-1}}+\left(\left[(1+2^{-p})\sigma\right]^{\frac{1}{p}}+3^{\frac{1}{p}}\right)\ov{\beta}
\right]$.
From \eqref{eq:th:conKL:z4} with $\widehat{m} = 1$ (since $c_\phi = 1$ for the KL property with exponent $\theta$), we have
\begin{align*}
 \Vert x^k - \ov{y}^k\Vert^p \leq  \sigma^{-1}2^{p}\left(\Delta_k-\Delta_{k+1} \right)\left(\left(L+\frac{1}{\gamma}\right)\Vert x^k-\ov{y}^k\Vert^{p-1}+[\phi'(v_k)]^{-1}\right).
\end{align*}
It is concluded by
\begin{equation*}
\Vert x^k - \ov{x}^k\Vert\leq \Vert x^k - \ov{y}^k\Vert+\Vert \ov{y}^k - \ov{x}^k\Vert\leq \Vert x^k - \ov{y}^k\Vert+\delta_k\leq \Vert x^k - \ov{y}^k\Vert+\omega
\Vert x^k - \ov{x}^k\Vert, 
\end{equation*}
that $(1-\omega)\Vert x^k - \ov{x}^k\Vert\leq \Vert x^k - \ov{y}^k\Vert$ and
\begin{align}\label{eq:th:conKL:z6}
[\phi'(v_k)]^{-1}&=\frac{1}{c(1-\theta)}v_k^{\theta}=\frac{1}{c(1-\theta)}v_k^{\frac{p-1}{p}}
\nonumber\\&\leq \frac{\left(\left[(1+2^{-p})\sigma\right]^{\frac{1}{p}}+3^{\frac{1}{p}}\right)^{p-1}\ov{\beta}^{p-1}}{c(1-\theta)} \Vert x^k - \ov{x}^k\Vert^{p-1}
\leq c_2 \Vert x^k - \ov{y}^k\Vert^{p-1},
\end{align}
with $c_2:=\frac{\left(\left[(1+2^{-p})\sigma\right]^{\frac{1}{p}}+3^{\frac{1}{p}}\right)^{p-1}\ov{\beta}^{p-1}}{c(1-\theta)(1-\omega)^{p-1}}$.
From \eqref{eq:th:conKL:z6}, we obtain
\begin{align*}
  \Vert x^k - \ov{y}^k\Vert^p \leq \sigma^{-1}2^{p}\left(\Delta_k-\Delta_{k+1} \right)\left(\left(L+\frac{1}{\gamma}+c_2\right)\Vert x^k-\ov{y}^k\Vert^{p-1}\right),
\end{align*}
and thus,
\begin{align}\label{eq:th:conKL:z7}
 c_3 \Vert x^k - \ov{y}^k\Vert \leq  \Delta_k-\Delta_{k+1},
\end{align}
with $c_3=\sigma2^{-p}\left(L+\frac{1}{\gamma}+c_2\right)^{-1}$. 
Together with $(1-\omega)\Vert x^k - \ov{x}^k\Vert\leq \Vert x^k - \ov{y}^k\Vert$, this ensures
\begin{align*}
  S_k=\sum_{i\geq k}\Vert x^i - \ov{x}^i\Vert &\leq \frac{1}{1-\omega}\sum_{i\geq k} \Vert x^i - \ov{y}^i\Vert
\leq \frac{1}{c_3(1-\omega)}\sum_{i\geq k} \left(\Delta_i-\Delta_{i+1} \right)
\\
&\overset{(i)}{\leq} \frac{1}{c_3(1-\omega)}\Delta_k\leq c_4 \Vert x^k - \ov{x}^k\Vert\leq  c_4 \left(S_k-S_{k+1}\right),
\end{align*}
where $c_4=\frac{c_1}{c_3(1-\omega)}$ and $(i)$  follows from the fact that $\phi$ is continuous and $\fgam{\gf}{p}{\gamma}(x^k) - \fv + v_k\to 0$, which implies $\Delta_k \to 0$.
Thus, $S_{k+1}\leq \left(1-\frac{1}{c_4}\right)S_k$, demonstrating the asymptotic $Q$-linear convergence of $\{S_k\}_{k\in \Nz}$. Hence, $\{x^k\}_{k\in \Nz}$, $\{\ov{x}^{k}\}_{k\in \Nz}$, and $\{\ov{y}^{k}\}_{k\in \Nz}$ converge R-linearly to a proximal fixed point.
\end{proof}

The global and linear convergence results established in Theorems~\ref{th:conKL}~and~\ref{th:linconv} rely on the KL property of the function $\fgam{\gf}{p}{\gamma}$ over the set $\Omega(x^k)$. By Theorem~\ref{th:comcon}, we have $\Omega(x^k) \subseteq \bs{\rm Fix}(\Tprox{\gf}{\widehat{\gamma}}{p})$ with $\widehat{\gamma}>0$. We conclude this section by introducing a sufficient condition for the KL property with a specific exponent to hold at these proximal fixed points.

\begin{proposition}[KL property of HiFBE]\label{pro:KLp:hope}
Suppose Assumption~\ref{assum:mainassum} holds. If the function $\ell(x, y)$ satisfies the KL property with exponent 
$\theta\in \left[\frac{p-1}{p}, 1\right)$ at each $(x, x) \in  \Dom{\partial  \ell}$ with $x \in \bs{\rm Fcrit}(\gf)$, then for each
$\ov{x}\in \bs{\rm Fix}(\Tprox{\gf}{\widehat{\gamma}}{p})$ with $\widehat{\gamma}>0$,  the function $\fgam{\gf}{p}{\gamma}$ satisfies the KL property with exponent $\theta$ at $\ov{x}$ for all $\gamma \in (0, \bs{\min} \{ L_p^{-1}, \gamma^{g, p}, \widehat{\gamma} \})$.
\end{proposition}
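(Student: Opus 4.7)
The plan is to transfer the Kurdyka-\L{}ojasiewicz inequality from $\ell$ at the diagonal point $(\ov{x},\ov{x})$ to $\fgam{\gf}{p}{\gamma}$ at $\ov{x}$ by exploiting the first-order structure of the inner minimization defining HiFBE. First I would check that $\ov{x}\in \bs{{\rm Fix}}(\Tprox{\gf}{\widehat{\gamma}}{p})$ actually forces $\ov{x}\in \Tprox{\gf}{\gamma}{p}(\ov{x})$ for every $\gamma\in\left(0,\bs\min\{L_p^{-1},\gamma^{g,p},\widehat{\gamma}\}\right)$: shrinking $\gamma$ only increases the penalty $\tfrac{1}{p\gamma}\Vert x-y\Vert^p$, so $\ov{x}$ minimizing the inner problem at $\widehat{\gamma}$ is inherited at smaller $\gamma$. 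Consequently $\fgam{\gf}{p}{\gamma}(\ov{x})=\ell(\ov{x},\ov{x})=\gf(\ov{x})$, and by Proposition~\ref{prop:relcrit}~\ref{prop:relcrit:opfix} one has $\ov{x}\in \bs{{\rm Fcrit}}(\gf)$, so the KL hypothesis on $\ell$ applies directly at $(\ov{x},\ov{x})$ with a desingularizer of the form $\phi(t)=ct^{1-\theta}$.

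For $x$ close to $\ov{x}$, I would choose $y\in \Tprox{\gf}{\gamma}{p}(x)$; by Theorem~\ref{th:hopbfb} such a $y$ exists and $y\to\ov{x}$ as $x\to\ov{x}$. Decomposing
\[
\fgam{\gf}{p}{\gamma}(x)-\fgam{\gf}{p}{\gamma}(\ov{x})=[\ell(x,y)-\ell(\ov{x},\ov{x})]+\tfrac{1}{p\gamma}\Vert x-y\Vert^p,
\]
raising to the $\theta$-th power and applying \eqref{eq:intrp:p01}, and then using the KL inequality for $\ell$, one arrives at a bound involving $\dist(0,\partial\ell(x,y))$. The first-order condition $\tfrac{1}{\gamma}\Vert x-y\Vert^{p-2}(x-y)\in\partial_y\ell(x,y)$ together with $\nabla_x\ell(x,y)=\nabla^2 f(x)(y-x)$ (using $f\in \mathcal{C}^2$ locally, as provided by Assumption~\ref{assum:eps}~\ref{assum:eps:a2} in the contexts where this proposition is invoked) gives $\dist(0,\partial\ell(x,y))\leq M\Vert x-y\Vert+\tfrac{1}{\gamma}\Vert x-y\Vert^{p-1}$, where $M$ locally bounds $\Vert\nabla^2 f\Vert$. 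Since $p\in(1,2]$, for sufficiently small $\Vert x-y\Vert$ we have $\Vert x-y\Vert\leq\Vert x-y\Vert^{p-1}$, while $\theta\geq\tfrac{p-1}{p}$ yields $\Vert x-y\Vert^{p\theta}\leq\Vert x-y\Vert^{p-1}$; these combine to give the consolidated estimate $|\fgam{\gf}{p}{\gamma}(x)-\fgam{\gf}{p}{\gamma}(\ov{x})|^{\theta}\leq C_1\Vert x-y\Vert^{p-1}$.

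The decisive step is a matching lower bound on $\dist(0,\partial\fgam{\gf}{p}{\gamma}(x))$. The subdifferential formula for HiFBE used in the proof of Theorem~\ref{th:conKL} (the inequality $(i)$ in \eqref{eq:th:conKL:z3}) represents every $\xi\in\partial\fgam{\gf}{p}{\gamma}(x)$ as $\nabla^2 f(x)(y-x)+\tfrac{1}{\gamma}\Vert x-y\Vert^{p-2}(x-y)$ for some $y\in\Tprox{\gf}{\gamma}{p}(x)$, so the reverse triangle inequality gives $\Vert\xi\Vert\geq\tfrac{1}{\gamma}\Vert x-y\Vert^{p-1}-M\Vert x-y\Vert$. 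Writing $M\Vert x-y\Vert=M\Vert x-y\Vert^{2-p}\cdot\Vert x-y\Vert^{p-1}$ with $\Vert x-y\Vert^{2-p}\to 0$ (using $p\leq 2$) lets the linear term be absorbed, yielding $\Vert\xi\Vert\geq\tfrac{1}{2\gamma}\Vert x-y\Vert^{p-1}$ on a small enough neighborhood. Because $r\mapsto r^{p-1}$ is increasing, selecting $y_\ast\in\Tprox{\gf}{\gamma}{p}(x)$ minimizing $\Vert x-\cdot\Vert$ produces a uniform bound on every element, namely $\dist(0,\partial\fgam{\gf}{p}{\gamma}(x))\geq\tfrac{1}{2\gamma}\Vert x-y_\ast\Vert^{p-1}$. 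Reusing the upper bound of the preceding paragraph with $y=y_\ast$ yields $|\fgam{\gf}{p}{\gamma}(x)-\fgam{\gf}{p}{\gamma}(\ov{x})|^{\theta}\leq 2\gamma C_1\,\dist(0,\partial\fgam{\gf}{p}{\gamma}(x))$, which is the KL inequality with exponent $\theta$.

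The main obstacle is exactly this lower bound: the envelope subgradient is a sum in which the Hessian contribution and the term $\tfrac{1}{\gamma}\Vert x-y\Vert^{p-2}(x-y)$ may partially cancel, and one has to argue that the cancellation is harmless because the latter dominates the former for $p\in(1,2]$ near $\ov{x}$, using that $\Vert x-y\Vert^{p-1}$ is large relative to $\Vert x-y\Vert$ in this regime. The threshold $\theta\geq\tfrac{p-1}{p}$ is precisely what makes the power $\Vert x-y\Vert^{p\theta}$ from the penalty gap and the power $\Vert x-y\Vert^{p-1}$ from the subgradient estimate line up into a single consolidated exponent on both sides of the final inequality.
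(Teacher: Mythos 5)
Your route is genuinely different from the paper's. The paper never works directly with $\partial\fgam{\gf}{p}{\gamma}$ at nearby points: it first shows that the joint function $\Phi(x,y)=\ell(x,y)+\tfrac{1}{p\gamma}\Vert x-y\Vert^p$ inherits the KL property with exponent $\theta$ at $(\ov{x},\ov{x})$ on the product space (splitting $\partial\Phi$ into its $x$- and $y$-components via \cite[Lemma~2.2]{Li18} and absorbing the coupling term with the inequality $\Vert a+b\Vert^p\geq \lambda^{p-1}\Vert a\Vert^p-(\lambda/(1-\lambda))^{p-1}\Vert b\Vert^p$), then proves $\Tprox{\gf}{\gamma}{p}(\ov{x})=\{\ov{x}\}$ and invokes \cite[Theorem~3.1]{Yu2022} to transfer the KL exponent from $\Phi$ to its inf-projection $\fgam{\gf}{p}{\gamma}=\inf_y\Phi(\cdot,y)$. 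Your direct verification -- upper-bounding $\vert\fgam{\gf}{p}{\gamma}(x)-\fgam{\gf}{p}{\gamma}(\ov{x})\vert^{\theta}$ through the KL inequality for $\ell$ plus the prox optimality condition, and lower-bounding $\dist(0,\partial\fgam{\gf}{p}{\gamma}(x))$ through Lemma~\ref{lem:frech:hifbe} -- is more self-contained (no appeal to the value-function transfer theorem) and makes the role of the threshold $\theta\geq\tfrac{p-1}{p}$ transparent; its cost is that you must confront the cancellation inside the envelope subgradient head-on, which the paper's product-space detour avoids.

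That cancellation step is where you have a genuine gap: your absorption argument writes $M\Vert x-y\Vert=M\Vert x-y\Vert^{2-p}\cdot\Vert x-y\Vert^{p-1}$ and relies on $\Vert x-y\Vert^{2-p}\to 0$, which is true only for $p<2$; at $p=2$ the factor is identically $1$, and if $M\geq\tfrac{1}{\gamma}$ the reverse triangle inequality gives nothing, so the claimed bound $\Vert\xi\Vert\geq\tfrac{1}{2\gamma}\Vert x-y\Vert^{p-1}$ does not follow. The fix is available but uses a different mechanism: for $p=2$ the subgradient is $\xi=\left(\tfrac{1}{\gamma}\mathcal{I}-\nabla^2 f(x)\right)(x-y)$, and $f\in\maj{L_2}{2}{\R^n}$ forces $\nabla^2 f(x)\preceq L_2\mathcal{I}$, so $\gamma<L_2^{-1}$ yields $\tfrac{1}{\gamma}\mathcal{I}-\nabla^2 f(x)\succeq(\tfrac{1}{\gamma}-L_2)\mathcal{I}\succ 0$ and hence $\Vert\xi\Vert\geq(\tfrac{1}{\gamma}-L_2)\Vert x-y\Vert$; the crude operator-norm bound $M$ must be replaced by this one-sided eigenvalue bound. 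Two smaller points: your argument silently assumes $p\in(1,2]$ (the consolidations $\Vert x-y\Vert\leq\Vert x-y\Vert^{p-1}$ and the absorption both fail for $p>2$), whereas the statement and the paper's proof do not impose this; and to get $y\to\ov{x}$ from outer semicontinuity of $\Tprox{\gf}{\gamma}{p}$ you need the strict single-valuedness $\Tprox{\gf}{\gamma}{p}(\ov{x})=\{\ov{x}\}$, which requires the strict inequality for $\gamma<\widehat{\gamma}$ (as the paper proves), not merely that $\ov{x}$ remains a minimizer.
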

\begin{proof}
 See Appendix~\ref{sec:app:proofs}.
\end{proof}



\section{Preliminary numerical experiments}
\label{sec:numerical}

In this section, we investigate the application of HiFBA and Boosted HiFBA to two classes of problems: regularized linear inverse problems (Subsection~\ref{subsec:invprob}) and regularized nonnegative matrix factorization (Subsection~\ref{subsec:RegularizedNMF}). We begin by outlining the implementation details, including the computation of proximal approximations and the framework for comparative analysis. Subsequently, we describe the problem settings and parameter configurations used in the experiments. Preliminary numerical results demonstrate that the proposed algorithms outperform several subgradient-based methods in regularized linear inverse problems, and the Bregman proximal gradient method (given by \eqref{eq:fbsGeneral} with $\bs\zeta(y,x^k)=D_h(y,x^k)$) in regularized nonnegative matrix factorization.

\vspace{-4mm}
\subsection{{\bf Implementation issues}}\label{sub:impiss}
We first provide an overview of the implementation details for HiFBA and Boosted HiFBA, alongside the subgradient-based methods and the Bregman forward-backward methods used for comparison. The key components of these implementations are summarized in the following:
\begin{description}
\item [{\bf (i)}] \textbf{(Approximation of HiFBS)} 
In the regularized linear inverse problem, to compute $\Tprox{\gf}{\gamma}{p}(x^k)$ for the current iterate $x^k$, we solve the optimization problem \eqref{HFBM} approximately using a subgradient method with geometrically decaying step-sizes (SG-GDSS), as detailed in Algorithm~\ref{alg:SG-DSS}. The algorithm terminates if the iteration count exceeds 25 or if the change in iterates satisfies $\Vert y^{k+1} - y^k \Vert < 10^{-3}$.

 \vspace{-2mm}
\begin{algorithm}[H]
\caption{Generic SubGradient algorithm}\label{alg:SG-DSS}
\begin{algorithmic}[1]
\State \textbf{Initialization} Start with $y^0=x^k$, $\beta\in (0, 1)$. Set $i=0$ and $\beta_0=\beta$.
\While{stopping criteria do not hold  }
\State Choose $\zeta^i\in \nabla f(x^k)+\partial g(y^i)+\frac{1}{\gamma}\Vert y^i -x^k\Vert^{p-2}(y^i - x^k)$;
\State Set $y^{i+1}=y^i -\beta_i \frac{\zeta^i}{\Vert \zeta^i\Vert}$;
\State Set $i=i+1$ and $\beta_i = \beta^{i+1}$;
\EndWhile
\end{algorithmic}
\end{algorithm}

\vspace{-3mm}
 In the regularized nonnegative matrix factorization, an element of $\Tprox{\gf}{\gamma}{p}(x^k)$ can be computed by solving a cubic equation, as detailed in Subsection~\ref{subsec:RegularizedNMF}.
 \vspace{2mm}

\item [{\bf (ii)}] \textbf{(Boosted HiFBA: Search direction, structural iteration, and accuracy)} 
For Boosted HiFBA, the search direction is defined as $d^k:=-\omega_k R_\gamma^{\varepsilon_k}(x^k)$ at each iteration \cite{laCruz2006spectral}. To determine $\omega_k$, we set $\omega_{\textbf{min}} = 10^{-1}$ and $\omega_{\textbf{max}} = 10^{10}$, with an initial value $\omega_0 = 1$. We compute
$\widehat{\omega}_k=\frac{\langle s^k, s^k\rangle}{\langle s^k, y^k\rangle}$, where
$s^k=x^k - x^{k-1}$ and $y^k=R_\gamma^{\varepsilon_k}(x^k) - R_\gamma^{\varepsilon_{k-1}}(x^{k-1})$. If $\vert \widehat{\omega}_k\vert\in [\omega_{\textbf{min}}, \omega_{\textbf{max}}]$, we set $\omega_k=\vert \widehat{\omega}_k\vert$. Otherwise, 
\begin{equation*}
  \omega_k=\begin{cases}
                    1 &   \mbox{if } \Vert R_\gamma^{\varepsilon_k}(x^k)\Vert>1, \\
                    10^5 &  \mbox{if }  \Vert R_\gamma^{\varepsilon_k}(x^k)\Vert<10^{-5}, \\
                    \Vert R_\gamma^{\varepsilon_k}(x^k)\Vert^{-1} & \mbox{otherwise}.
                  \end{cases}
\end{equation*}
The structural iteration is defined as $\st(\alpha_k,d^k)=(1-\alpha_k)\ov{x}^k+\alpha_k (x_k+d^k)$.
We select the sequence $\{\varepsilon_k=\frac{1}{(k+1)^2}\}_{k\in \Nz}$ for the update step in \eqref{eq:alg:ingrad:upd}

 \vspace{2mm}
\item [{\bf (iii)}] \textbf{(Implementation and comparative analysis)} 
In Subsection~\ref{subsec:invprob}, we evaluate the performance of HiFBA and Boosted HiFBA against the SG-GDSS algorithm. Furthermore, we compare these methods with the subgradient method employing a constant step-size (SG-CSS) for step-sizes $\alpha \in \{0.01, 0.1, 1\}$.
Moreover, in Subsection~\ref{subsec:RegularizedNMF}, we compare HiFBA and Boosted HiFBA against the Bregman proximal gradient method (BPG) and convex-concave inertial BPG (CoCaIn) \cite{bolte2018first,Mukkamala2019Beyond,Mukkamala20coc}.
In Subsection~\ref{subsec:invprob}, all algorithms are terminated after 2 seconds, whereas in Subsection~\ref{subsec:RegularizedNMF}, each algorithm is executed for 1000 iterations. All experiments are conducted in Python on a laptop equipped with a 12th Gen Intel$\circledR$ Core$^{\text{TM}}$ i7-12800H CPU (1.80 GHz) and 16 GB of RAM. An implementation of HiFBA is publicly available at \url{https://sites.google.com/view/akabgani/software}.
\end{description}
\vspace{-4mm}
\subsection{{\bf Linear inverse problem with clipped quadratic penalty}}
\label{subsec:invprob}
We apply HiFBA and Boosted HiFBA to a nonsmooth nonconvex regression problem, as outlined in Example~\ref{ex:RLInv}, specifically \eqref{eq:lininv:gform}. 
The regularizer $R$ is the clipped quadratic penalty \cite{Chen2014Convergence} to promote sparsity.
The objective function is given by
\begin{equation} \label{eq:objective_problem:prob1}
    {\mathop {\mathrm{\bs\min}}\limits_{x\in \R^n}}\  \frac{1}{q} \Vert Ax - b\Vert^q_q +  \lambda\sum_{i=1}^{n} \left(\left(2 \vert x_i\vert - x_{i}^2\right)\iota_{\vert x_i\vert\leq 1}+\iota_{\vert x_i\vert> 1}\right),
\end{equation}
where for a set $C$, $\iota_C (x)=1$ if $x \in C$ and $\iota_{C} (x) = 0$ otherwise. 
The regularization parameter is set to $\lambda = 1$. The data fidelity term $f(x) := \frac{1}{q} \Vert Ax - b\Vert^q_q$ belongs to $\maj{L_p}{p}{\R^n}$ with $p=q$, and the regularizer is proper, nonsmooth lsc function.
We assess the impact of $p =q \in \{1.1, 1.5, 1.75, 2\}$ on algorithmic performance, measured using
the signal-to-noise ratio (SNR).
For HiFBA and Boosted HiFBA, we initialize the parameters as $\gamma = \sigma = 1$. At each iteration, we compute $L_p = \frac{\Vert \nabla f(x_k) - \nabla f(x_{k-1}) \Vert}{\Vert x_k - x_{k-1} \Vert^{p-1}}$ and update $\gamma = 0.99 L_p^{-1}$ and $\sigma = 0.99 \left( \frac{1 - \gamma L_p}{p \gamma} \right)$. Additionally, we set $\vartheta = 0.72$.

We generate synthetic data as follows. The data matrix $A \in \mathbb{R}^{m \times n}$, with $m = 500$ and $n = 1000$, has entries drawn independently from a standard Gaussian distribution $\mathcal{N}(0, \frac{1}{m})$. The true signal $x_{\text{true}} \in \mathbb{R}^n$ is sparse, with a sparsity level of $10\%$, where the positions of non-zero elements are selected uniformly at random, and their magnitudes are sampled from $\mathcal{N}(0, 5^2)$. The clean observation vector is computed as $b_{\text{true}} = A x_{\text{true}}$. To simulate non-Gaussian, heavy-tailed noise, we corrupt $b_{\text{true}}$ with additive Laplace noise $\nu \in \mathbb{R}^m$, leading to the observation vector $b = b_{\text{true}} + \nu$, where each component $\nu_i$ is independently drawn from a Laplace distribution with zero mean and scale parameter $s = \frac{0.1 \Vert b_{\text{true}} \Vert}{\sqrt{2m}}$, corresponding to a $10\%$ noise level.

We evaluate the impact of the parameter $q$ on algorithmic performance, measured via SNR in decibels, defined as
$\mathrm{SNR}_{\mathrm{dB}} = 10\log_{10}\frac{\Vert x_{\text{true}}\Vert}{\Vert \widehat{x}- x_{\text{true}}\Vert}$,
where $\widehat{x}$ is the output of algorithm.
The initial parameter $\beta_0$ and resulted $\mathrm{SNR}_{\mathrm{dB}}$ values are presented in Table~\ref{tab:rep:num:clip}. 
Figure~\ref{fig:invprob:clip} illustrates the SNR and relative error as functions of iteration count for $q = 1.1$, comparing Boosted HiFBA, HiFBA, and subgradient-based methods. 
The findings indicate that lower $q$ values yield higher $\mathrm{SNR}_{\mathrm{dB}}$, suggesting improved reconstruction accuracy. Furthermore, HiFBA and Boosted HiFBA achieve higher $\mathrm{SNR}_{\mathrm{dB}}$ values compared to subgradient-based methods, demonstrating their superior efficiency. Moreover, Figure~\ref{fig:invprob:clip} demonstrates that Boosted HiFBA achieves target SNR and relative error levels in fewer iterations than other methods, indicating faster convergence.

\vspace{-4mm}
\begin{table}[ht]
\centering
\begin{tabular}{c c c c c  c c c c c}
\toprule
$p$ &  \multicolumn{2}{c}{HiFBA} &  \multicolumn{2}{c}{Boosted HiFBA} & \multicolumn{2}{c}{SG-GDSS} & \multicolumn{3}{c}{SG-CSS}  \\
\cmidrule{8-10}
&                  &                                                      &&& & & ($\alpha=0.01$) & ($\alpha=0.1$) & ($\alpha=1$) \\
& $\beta_0$  & $\mathrm{SNR}_{\mathrm{dB}}$ &  $\beta_0$  & $\mathrm{SNR}_{\mathrm{dB}}$ & $\beta_0$ &$\mathrm{SNR}_{\mathrm{dB}}$  & $\mathrm{SNR}_{\mathrm{dB}}$ & $\mathrm{SNR}_{\mathrm{dB}}$ &$\mathrm{SNR}_{\mathrm{dB}}$ \\
\midrule
1.10  &0.97 &11.47 & 0.86 & \textbf{11.70} & 0.97 & 0.69 & 0.66 & 9.34 & 11.44 \\
1.50  & 0.80  &10.45 & 0.76 & \textbf{10.83} & 0.80 & 0.17 & 1.37 & 7.25 & 10.35 \\
1.75 & 0.94  & 9.53  & 0.86& \textbf{10.33} & 0.94 & 0.72 & 1.59 & 6.02 & 9.53\\
2.00  & 0.99  &7.51 & 0.75 & \textbf{8.54} & 0.99 & 2.65 & 1.79 & 5.25 & 7.47 \\
\bottomrule
\end{tabular}
\caption{$\beta_0$ values and $\mathrm{SNR}_{\mathrm{dB}}$ results for different methods across $p$-values}
\label{tab:rep:num:clip}
\end{table}

\begin{figure}[ht]
    \centering
    \begin{subfigure}{0.42\textwidth}
        \centering
        \includegraphics[width=1.1\textwidth]{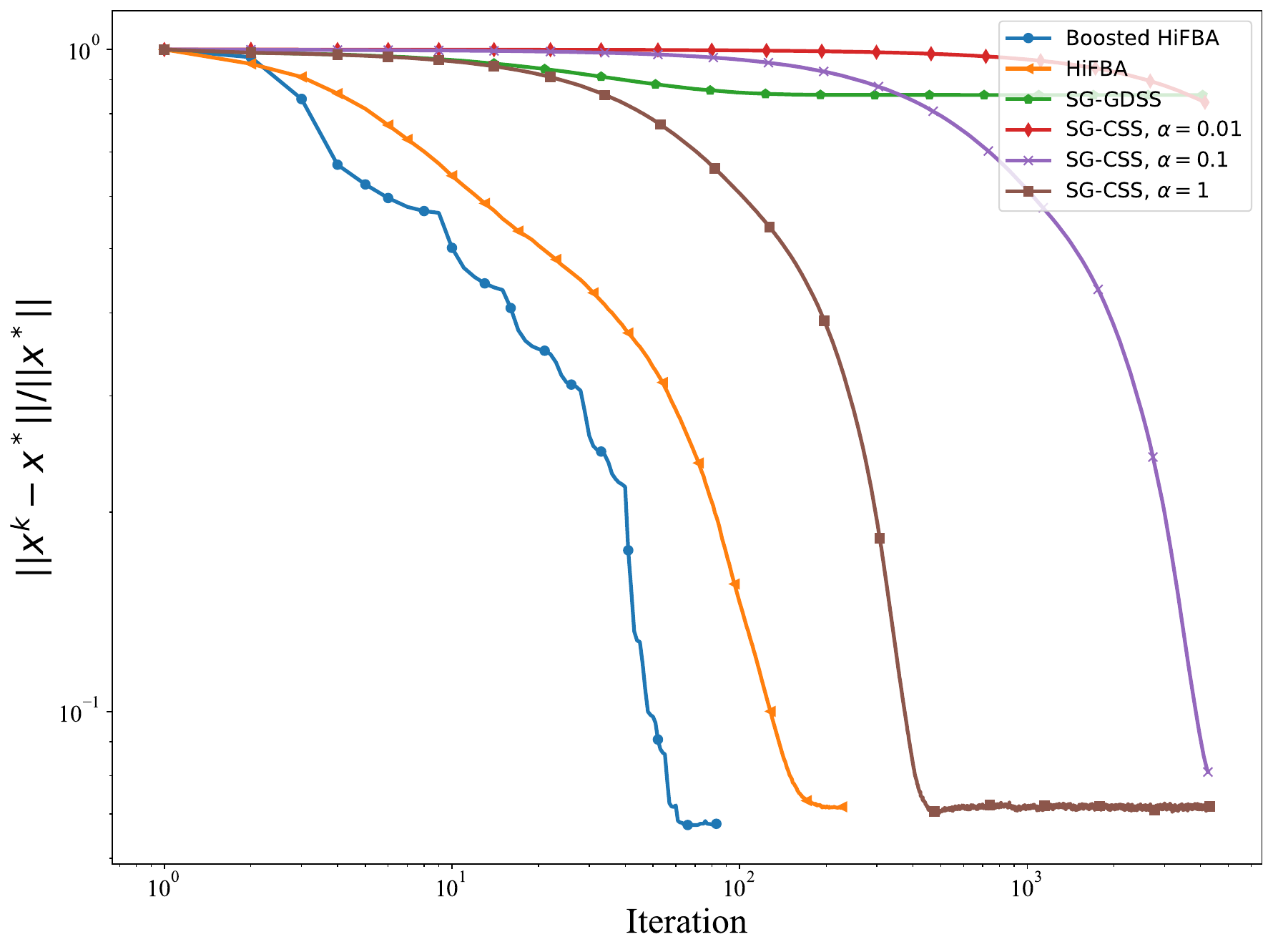}
        \caption{Relative error versus iteration}
    \end{subfigure}
    \qquad\qquad\quad
    \begin{subfigure}{0.42\textwidth}
        \centering
        \includegraphics[width=1.1\textwidth]{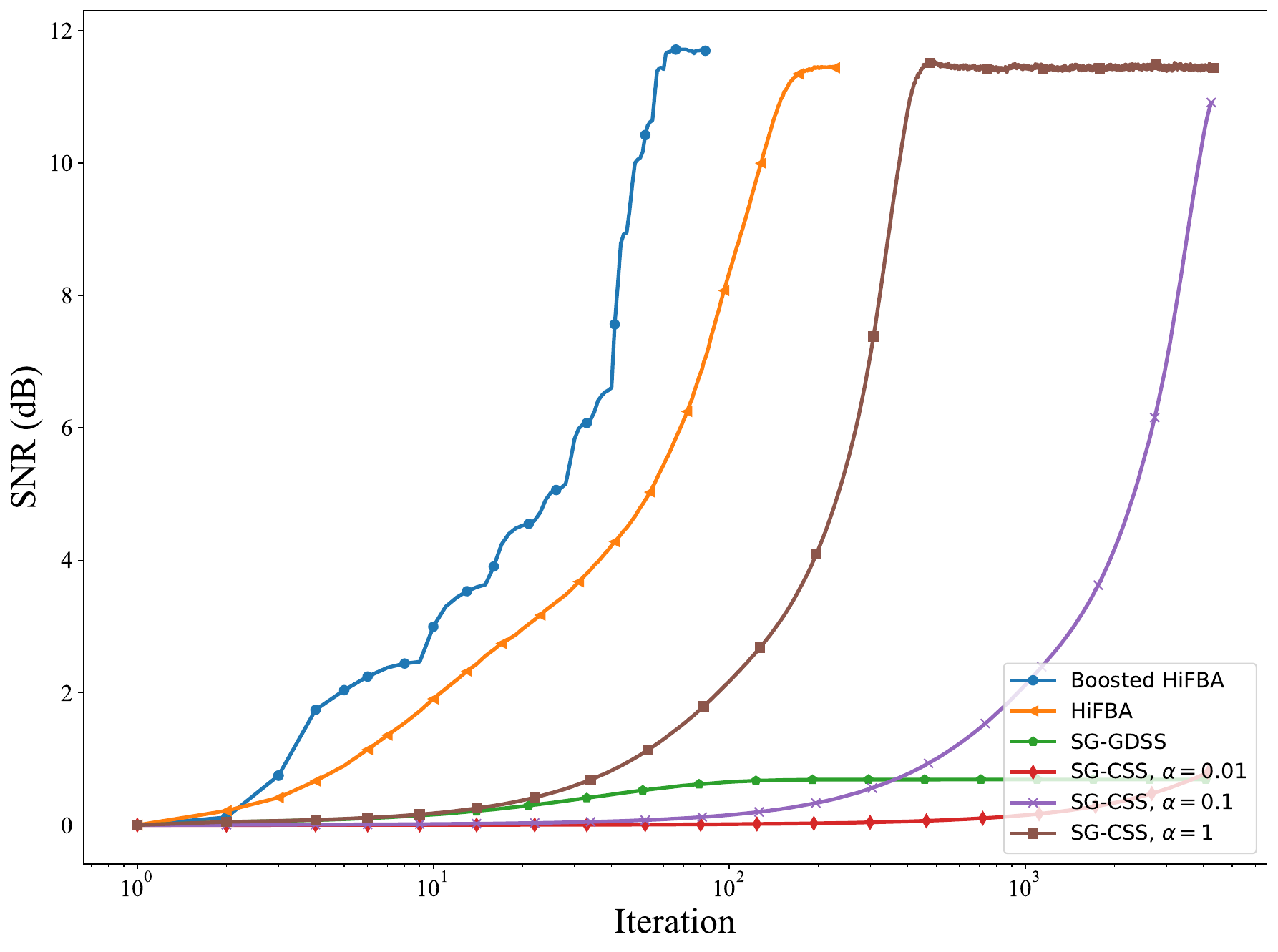}
        \caption{$\mathrm{PSNR}_{\mathrm{dB}}$ versus iteration}
    \end{subfigure}

    \caption{(a) Relative errors and (b) $\mathrm{SNR}_{\mathrm{dB}}$ values versus iteration count for Boosted HiFBA, HiFBA, and subgradient methods with $p = q = 1.1$. Boosted HiFBA achieves target SNR and relative error levels in fewer iterations.}
    \label{fig:invprob:clip}
\end{figure}

\subsection{{\bf Regularized nonnegative matrix factorization}}
\label{subsec:RegularizedNMF}
In this subsection, we apply HiFBA and Boosted HiFBA to a regularized nonnegative matrix factorization (NMF) problem, considered as a special case of matrix factorization introduced in Example~\ref{ex:relsmooth:pp}.  
Let $X\in\R^{m\times n}$ and let the rank parameter $r\ll\bs\min\{m,n\}$.  
The regularized NMF problem is formulated as
\begin{equation}\label{eq:mainp:ex:relsmooth:pp:num}
{\mathop {\mathrm{\bs\min}}\limits_{U\in\R^{m\times r},\;V\in\R^{n\times r}}}\; \gf(U,V) \;=\; \frac{1}{2}\Vert X- U V^\top\Vert_F^2 +\iota_{\{U\ge 0\}}(U) + \iota_{\{V\ge 0\}}(V)+\lambda\left(\Vert U\Vert_1+\Vert V\Vert_1\right).
\end{equation}
Following the approach of~\cite{Mukkamala2019Beyond}, we apply both the Bregman Proximal Gradient (BPG) and CoCaIn methods directly to solve~\eqref{eq:mainp:ex:relsmooth:pp:num}, using the kernel $h$ defined in~\eqref{eq:ex:relsmooth:pp:1}.  
For HiFBA and Boosted HiFBA, we define
$f(U,V)=\frac{1}{2}\Vert X- U V^\top\Vert_F^2$ and
\begin{equation}\label{eq:gform:ex:relsmooth:pp:num}
g(U,V) = \iota_{\{U\ge 0\}}(U) + \iota_{\{V\ge 0\}}(V)+\lambda\left(\Vert U\Vert_1+\Vert V\Vert_1\right),
\end{equation}
and solve the composite problem~\eqref{eq2:mainp:ex:relsmooth:pp} using these two algorithms.  
In both HiFBA and Boosted HiFBA, we set $p=2$, which allows a closed-form computation of $\Tprox{\gf}{\gamma}{p}(x^k)$ via the root of a cubic equation, as described in Lemma~\ref{lem:closedform}.  
In Boosted HiFBA, the parameters are chosen as $\vartheta = 0.95$ and $\sigma = \tfrac{0.99}{2\gamma}$.  
Indeed, from Algorithm~\ref{alg:inexact}, $\sigma$ must satisfy $\sigma\in\big(0,\tfrac{1-\gamma L_p}{p\gamma}\big)$.  
Since $L_p$ in~\eqref{eq:ex:relsmooth:pp:1} can be freely adjusted, we choose it such that $\gamma L_p$ is close to zero, allowing us to take $\sigma = \tfrac{0.99}{2\gamma}$.

For the numerical experiments, we set $\lambda=0.1$ and consider two datasets:  
(i) Medulloblastoma dataset (M-dataset)~\cite{Brunet04Metagenes} with $X\in\R^{5893\times34}$, and  
(ii) Synthetic dataset (S-dataset) with $X\in\R^{200\times200}$ generated with uniformly distributed nonnegative entries, i.e., $X\sim\mathcal{U}(0,0.1)$.  

In both cases, the factorization rank is set to $r=5$, and the initial matrices are initialized as $U^0 = 0.1\,\mathbf{1}_{m\times r}$ and $V^0 = 0.1\,\mathbf{1}_{n\times r}$.  
For the Medulloblastoma dataset, we set $\gamma=0.95$ for HiFBA and $\gamma=9$ for Boosted HiFBA, whereas for the synthetic dataset, we set $\gamma=300$ for both HiFBA and Boosted HiFBA.

Figure~\ref{fig:nmf} presents the objective value versus iteration count for BPG, CoCaIn, HiFBA, and Boosted HiFBA on these datasets.  
Subfigure~\ref{fig:nmf}(a) illustrates that HiFBA achieves lower objective values than BPG on the Medulloblastoma dataset, while Boosted HiFBA further accelerates convergence and attains smaller function values in fewer iterations than CoCaIn.  
For the synthetic dataset, HiFBA and BPG exhibit comparable performance, whereas Boosted HiFBA consistently outperforms the other methods, demonstrating faster convergence and improved solution quality.

\begin{figure}[ht]
    \centering
    \begin{subfigure}{0.42\textwidth}
        \centering
        \includegraphics[width=1.1\textwidth]{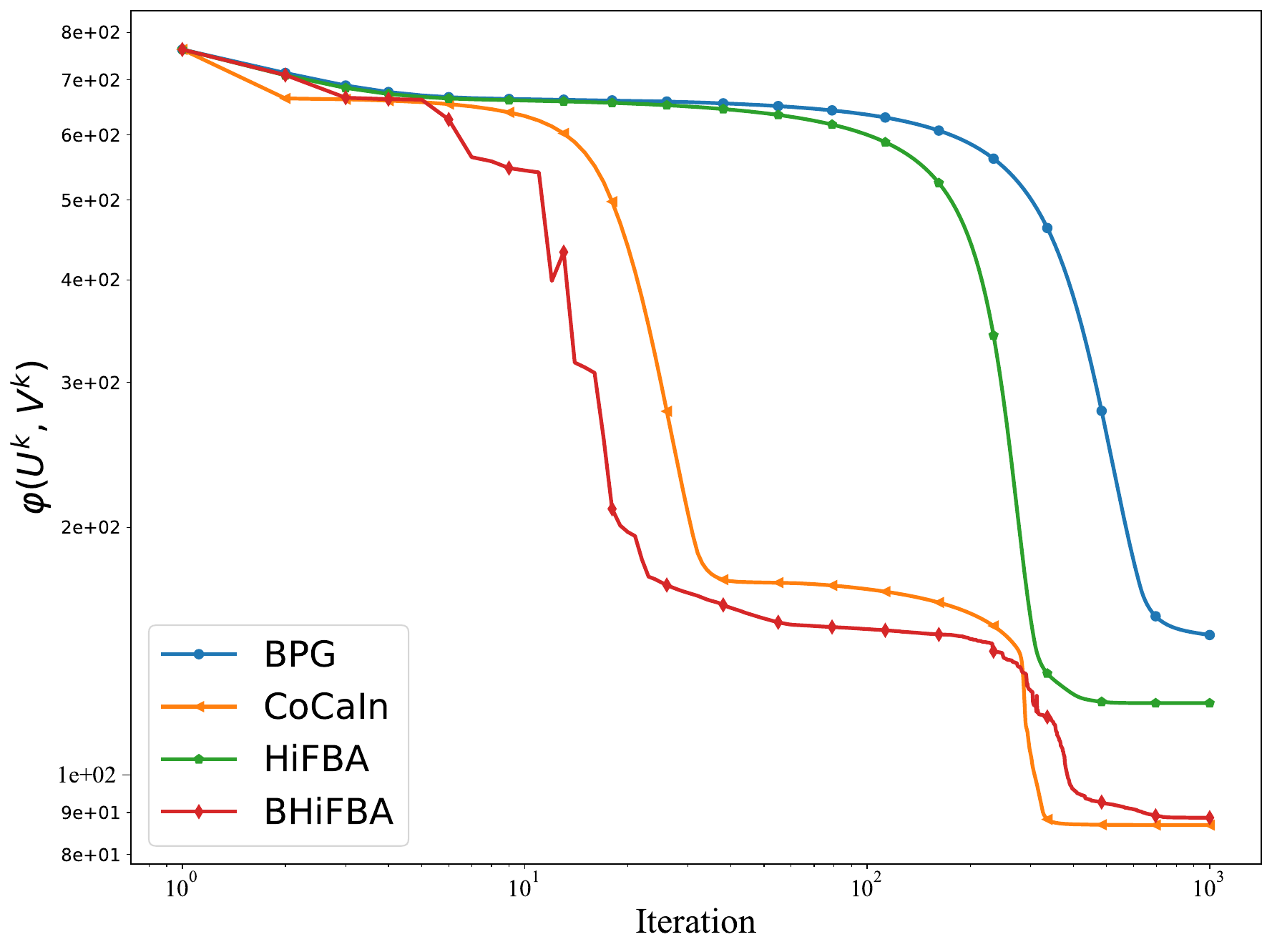}
        \caption{Objective value versus iteration (M-dataset)}
    \end{subfigure}
    \qquad\qquad\quad
    \begin{subfigure}{0.42\textwidth}
        \centering
        \includegraphics[width=1.1\textwidth]{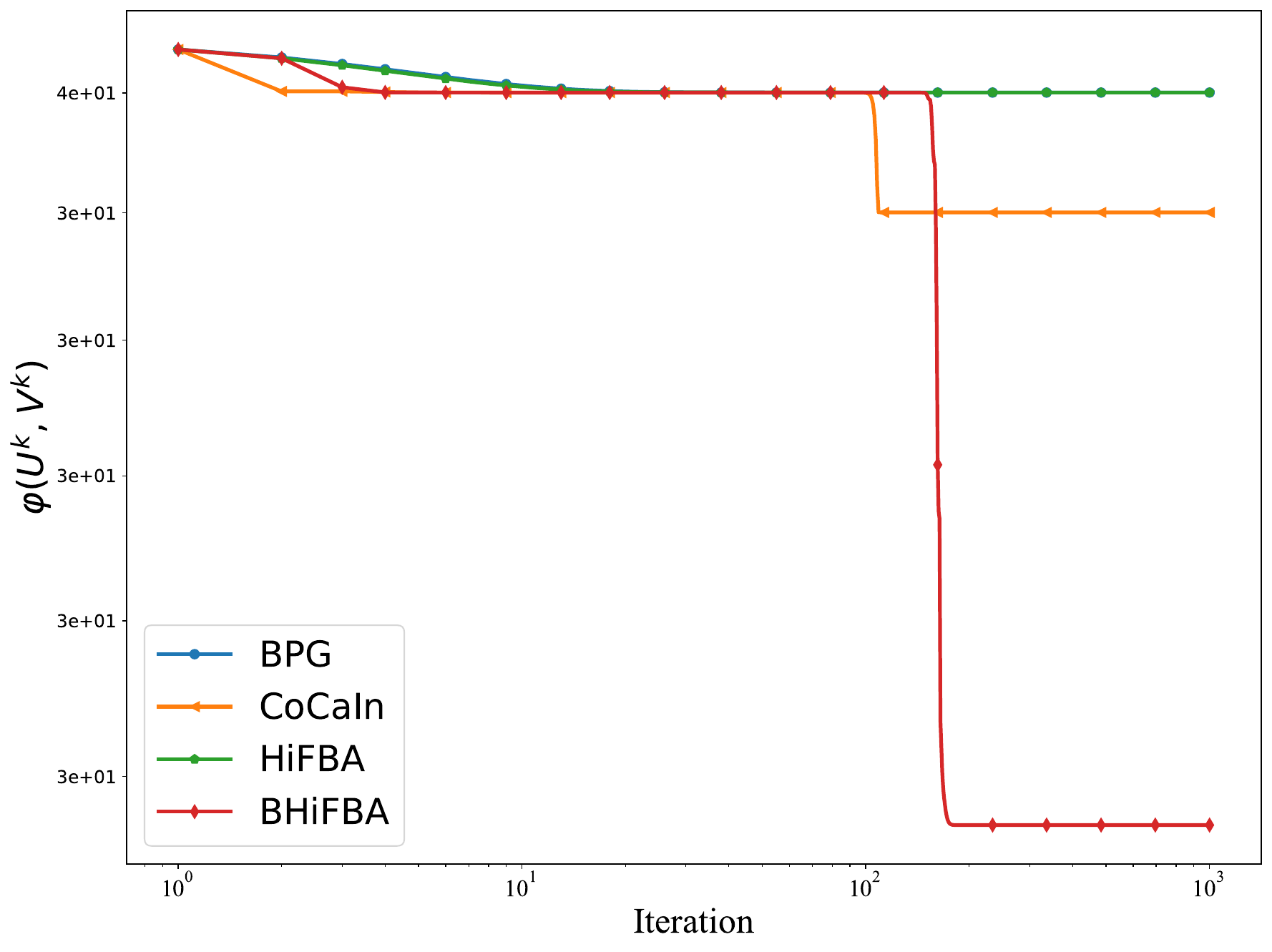}
        \caption{Objective value versus iteration (S-dataset)}
    \end{subfigure}

    \caption{Objective values versus iteration count for BPG, CoCaIn, HiFBA, and Boosted HiFBA in regularized NMF.}
    \label{fig:nmf}
\end{figure}


\section{Conclusion}\label{sec:disc}
In this paper, we introduced a first-order majorization-minimization framework to tackle structured nonsmooth and nonconvex optimization problems. To this end, we presented the concept of a high-order majorant and established its connection to the class of paraconcave functions -- involving functions with H\"{o}lder continuous gradients for $p \in (1, 2] $, concave functions for any $p>1$, and beyond. In the composite setting of problem \eqref{eq:mainproblemcom}, this majorant reduces to a variant of the celebrated descent lemma, which we analyze across several classes of optimization problems. This framework enables the design of flexible splitting algorithms. Specifically, we develop an inexact high-order forward-backward splitting algorithm (HiFBA) and its line-search-accelerated variant (Boosted HiFBA), both leveraging an inexact oracle for the high-order forward-backward envelope (HiFBE), whose fundamental properties we thoroughly investigated. We established the subsequential convergence under mild inexactness conditions, as well as the global and linear convergence under the Kurdyka–\L{}ojasiewicz property. Our preliminary numerical experiments on some linear inverse and regularized nonnegative matrix factorization problems support and validate our theoretical results.

\appendix
\section{Auxiliary results}
\label{sec:app}

\renewcommand{\thetheorem}{A.\arabic{theorem}}
\setcounter{theorem}{0} 

In this section, we present auxiliary results that facilitate the description of methods and proofs. We begin with a lemma establishing the relationship between the coercivity of a function and its HiFBE.

\begin{lemma}[Coercivity]\label{lem:coer}
Let $p>1$, $f:\R^n\to \R$ be Fr\'{e}chet differentiable, and $g:\R^n\to \Rinf$ be a proper lsc function. If $f\in \maj{L_p}{p}{\R^n}$ and the function $\gf:=f+g$ is coercive, then for each $\gamma\in (0, L_p^{-1})$, the function $\fgam{\gf}{p}{\gamma}$ is coercive.
\end{lemma}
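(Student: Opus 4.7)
The plan is to bound $\fgam{\gf}{p}{\gamma}$ below by a simpler coercive expression using the high-order descent lemma, and then derive a contradiction from assuming non-coercivity. Since $f\in\maj{L_p}{p}{\R^n}$ and is Fr\'{e}chet differentiable, the inequality \eqref{eq:hdlsm} gives, for all $x,y\in\R^n$,
\[
f(x)+\langle\nabla f(x),y-x\rangle\ \geq\ f(y)-\frac{L_p}{p}\Vert y-x\Vert^p.
\]
Substituting this into the infimum defining HiFBE in \eqref{HFBE} yields
\[
\fgam{\gf}{p}{\gamma}(x)\ \geq\ \mathop{\bs\inf}\limits_{y\in\R^n}\left\{\gf(y)+\left(\frac{1}{p\gamma}-\frac{L_p}{p}\right)\Vert y-x\Vert^p\right\}.
\]
Set $c:=\tfrac{1}{p\gamma}-\tfrac{L_p}{p}$, which is strictly positive because $\gamma\in(0,L_p^{-1})$.

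Next, I would exploit coercivity of $\gf$. Since $\gf$ is proper and lsc (as $f$ is continuous and $g$ is proper lsc) and coercive, it attains its infimum, so there exists $\gf^*\in\R$ with $\gf(y)\geq\gf^*$ for all $y\in\R^n$. Suppose for contradiction that $\fgam{\gf}{p}{\gamma}$ is not coercive: there exist $M\in\R$ and a sequence $\{x^k\}$ with $\Vert x^k\Vert\to\infty$ such that $\fgam{\gf}{p}{\gamma}(x^k)\leq M$. From the lower bound above, for each $k$ we can pick $y^k\in\R^n$ with
\[
\gf(y^k)+c\Vert y^k-x^k\Vert^p\ \leq\ M+1.
\]

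The final step is to derive the contradiction from this inequality in two stages. First, using $\gf(y^k)\geq\gf^*$, we obtain $c\Vert y^k-x^k\Vert^p\leq M+1-\gf^*$, so the sequence $\{\Vert y^k-x^k\Vert\}$ is bounded. Second, since the penalty term is nonnegative, $\gf(y^k)\leq M+1$; coercivity of $\gf$ then implies that $\{y^k\}$ is bounded. Combining, $\Vert x^k\Vert\leq\Vert y^k\Vert+\Vert y^k-x^k\Vert$ is bounded, contradicting $\Vert x^k\Vert\to\infty$. Hence $\fgam{\gf}{p}{\gamma}$ is coercive. No real obstacle is anticipated here; the only subtlety is verifying that $\gf$ is bounded below, which follows from the proper lsc coercivity hypothesis without needing Assumption~\ref{assum:mainassum}\ref{assum:mainassum:armin}.
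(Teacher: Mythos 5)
Your proposal is correct and follows essentially the same route as the paper's proof: both use the high-order majorant to bound $\fgam{\gf}{p}{\gamma}$ below by $\inf_y\{\gf(y)+(\tfrac{1}{p\gamma}-\tfrac{L_p}{p})\Vert y-x\Vert^p\}$, then run a contradiction argument with near-minimizers $y^k$, using coercivity of $\gf$ to bound $\{y^k\}$ and boundedness below of $\gf$ to bound $\Vert y^k-x^k\Vert$. If anything, you spell out more explicitly than the paper why $\Vert y^k-x^k\Vert$ stays bounded (via $\gf\geq\gf^*$), which is a welcome clarification but not a different proof.
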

\begin{proof}
By the properties of infima, for any $\varepsilon>0$ and $x\in\R^n$,
there exists $y^\varepsilon_x\in\R^n$ such that, for each $\gamma\in  (0, L_p^{-1})$,
\begin{equation}\label{eq:hiordermor:coer}
f(y^\varepsilon_x)+g(y^\varepsilon_x)\leq f(x)+\langle \nabla f(x), y^\varepsilon_x - x\rangle+g(y^\varepsilon_x)+\frac{L_p}{p}\Vert y^\varepsilon_x - x\Vert^p
\leq \fgam{\gf}{p}{\gamma}(x)+\varepsilon.
\end{equation}
By contradiction, suppose that $\fgam{\gf}{p}{\gamma}$ is not coercive. 
Then, there exists a sequence $\{x^k\}_{k\in \mathbb{N}} \subseteq \R^{n}$ with $\Vert x^k\Vert \rightarrow \infty$ and $\bs\lim_{k\to \infty}\fgam{\gf}{p}{\gamma}(x^k)<\infty$. Let $\{y^\varepsilon_{x^k}\}_{k\in \mathbb{N}}$ be a corresponding sequence to $\{x^k\}_{k\in \mathbb{N}}$ such that each $y^\varepsilon_{x^k}$ satisfies \eqref{eq:hiordermor:coer}. Since $\varepsilon$ is fixed and, by convention $\infty - \infty = \infty$, relation \eqref{eq:hiordermor:coer} implies that 
$\bs\lim_{k\to \infty}\gf(y^\varepsilon_{x^k})<\infty$ and $\bs\lim_{k\to  \infty}\Vert x^k-y^\varepsilon_{x^k}\Vert^p<\infty$.
From the coercivity of $\gf$ and $\bs\lim_{k\to \infty}\gf(y^\varepsilon_{x^k})<\infty$, we deduce that 
$\bs\lim_{k\to \infty}\Vert y^\varepsilon_{x^k}\Vert< \infty$. 
However, using the inequality
$\Vert x^k\Vert^p\leq 2^{p-1}\left(\Vert x^k -y^\varepsilon_{x^k}\Vert^p+\Vert y^\varepsilon_{x^k}\Vert^p\right)$,
since $\Vert y^\varepsilon_{x^k}\Vert$ remains bounded while $\Vert x^k\Vert\to \infty$, we have
$\bs\lim_{k \to\infty}\Vert x^k -y^\varepsilon_{x^k}\Vert=\infty$, which is a contradiction. Thus, $\fgam{\gf}{p}{\gamma}$ is coercive.
\end{proof}

\begin{remark}\label{rem:lem:coer}
From Lemma~\ref{lem:coer} and \cite[Proposition~11.12]{Bauschke17}, for each $\lambda\in \R$,  the sublevel set 
$\mathcal{L}(\fgam{\gf}{p}{\gamma}, \lambda):=\{x\in \R^n\mid \fgam{\gf}{p}{\gamma}(x)\leq \lambda\}$, is bounded.
\end{remark}

We present results about the Mordukhovich subdifferential of HiFBE.
\begin{lemma}[Subdifferential of HiFBE]\label{lem:frech:hifbe}
Let $p>1$, $f\in \mathcal{C}^2(U)$ where $U \subset \R^n$ is an open neighborhood of $\ov{x}\in \dom{\gf}$,
and let $g$ be a high-order prox-bounded function with threshold $\gamma^{g, p}>0$.
Then, for each $\gamma\in (0, \gamma^{g, p})$ and $\ov{y}\in \Tprox{\gf}{\gamma}{p}(\ov{x})$,
we have
\begin{equation}\label{eq:cor:morsubhifbe}
\partial\fgam{\gf}{p}{\gamma}(x)\subseteq \left\{\nabla^2f(\ov{x})(\ov{y}-\ov{x})+\frac{1}{\gamma}\Vert \ov{x}-\ov{y}\Vert^{p-2}(\ov{x}-\ov{y})\right\}.
\end{equation}
\end{lemma}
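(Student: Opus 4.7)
The plan is to recognize $\fgam{\gf}{p}{\gamma}$ as an optimal-value (marginal) function and apply the standard subdifferential calculus for such functions. Writing
\[
\Phi(x,y) \;=\; h(x,y) + g(y), \qquad h(x,y) \;:=\; f(x) + \langle \nabla f(x), y - x\rangle + \tfrac{1}{p\gamma}\Vert x - y\Vert^p,
\]
we have $\fgam{\gf}{p}{\gamma}(x) = \bs\inf_{y} \Phi(x,y)$ and $\Tprox{\gf}{\gamma}{p}(x) = \argmint{y\in\R^n} \Phi(x,y)$. Since $f \in \mathcal{C}^2(U)$ and the map $z \mapsto \tfrac{1}{p}\Vert z\Vert^p$ is continuously differentiable on $\R^n$ for every $p > 1$ (with gradient $\Vert z\Vert^{p-2}z$ under the convention $0/0 = 0$), the function $h$ is of class $\mathcal{C}^1$ on a neighborhood of $(\ov{x},\ov{y})$ inside $U \times \R^n$, while $g$ is proper lsc.

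Next, I would verify the hypotheses of the value-function subdifferential theorem (Rockafellar--Wets, Theorem~10.13): properness and lower semicontinuity of $\Phi$ follow from those of $g$ combined with the continuity of $h$; level-boundedness of $\Phi(x,\cdot)$ locally uniformly in $x$ near $\ov{x}$ is exactly Theorem~\ref{th:basichifbe}~\ref{th:basichifbe:levelunif}; and finiteness of $\fgam{\gf}{p}{\gamma}(\ov{x})$ along with nonemptiness/compactness of $\Tprox{\gf}{\gamma}{p}(\ov{x})$ are Theorem~\ref{th:basichifbe}~\ref{th:basichifbe:finite} and Theorem~\ref{th:hopbfb}~\ref{th:hopbfb:proxb:proxnonemp}. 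These conditions yield the inclusion
\[
\partial \fgam{\gf}{p}{\gamma}(\ov{x}) \;\subseteq\; \left\{\,v \in \R^n \mid (v,0) \in \partial \Phi(\ov{x},\ov{y})\,\right\}
\]
for every $\ov{y} \in \Tprox{\gf}{\gamma}{p}(\ov{x})$.

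Because $h$ is $\mathcal{C}^1$ near $(\ov{x},\ov{y})$ and $g$ depends only on $y$, the exact sum rule gives $\partial \Phi(\ov{x},\ov{y}) = \nabla h(\ov{x},\ov{y}) + \{0\} \times \partial g(\ov{y})$. Projecting onto the $x$-component together with the direct computation
\[
\nabla_x h(\ov{x},\ov{y}) \;=\; \nabla f(\ov{x}) + \nabla^2 f(\ov{x})(\ov{y}-\ov{x}) - \nabla f(\ov{x}) + \tfrac{1}{\gamma}\Vert \ov{x}-\ov{y}\Vert^{p-2}(\ov{x}-\ov{y}),
\]
in which the two copies of $\nabla f(\ov{x})$ cancel, yields the claimed singleton bound \eqref{eq:cor:morsubhifbe}.

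The only subtle point is that $\tfrac{1}{p}\Vert\cdot\Vert^p$ is merely $\mathcal{C}^1$ (not $\mathcal{C}^2$) when $p \in (1,2]$ near the diagonal $\ov{x} = \ov{y}$, but this regularity is enough for the first-order sum rule and for the marginal-function theorem to apply. Consequently, the main obstacle is purely bookkeeping, namely verifying the standard hypotheses of the value-function subdifferential theorem, and this has already been arranged by the structural properties of HiFBE established earlier in Section~\ref{sec:hifbe}.
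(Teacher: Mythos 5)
Your argument is correct in substance, but it takes a genuinely different route from the paper. You package the whole lemma into the parametric-minimization theorem for marginal functions (Rockafellar--Wets, Theorem~10.13) plus the exact sum rule for a $\mathcal{C}^1$ function plus a proper lsc function, after which the result is a one-line gradient computation; the hypotheses you need (properness, lower semicontinuity, and level-boundedness of $\Phi(x,\cdot)$ locally uniformly in $x$) are indeed already supplied by Theorem~\ref{th:basichifbe}~\ref{th:basichifbe:finite}--\ref{th:basichifbe:levelunif} and Theorem~\ref{th:hopbfb}. The paper instead reproves the two halves of that theorem by hand in this special case: for the regular subdifferential it observes that $x\mapsto f(x)+\langle\nabla f(x),\ov{y}-x\rangle+g(\ov{y})+\frac{1}{p\gamma}\Vert \ov{y}-x\Vert^p$ is a smooth function majorizing $\fgam{\gf}{p}{\gamma}$ and touching it at $\ov{x}$, so every Fr\'{e}chet subgradient of the envelope must equal its gradient there; for the limiting subdifferential it passes to the limit along sequences $\zeta^k\in\widehat{\partial}\fgam{\gf}{p}{\gamma}(x^k)$ using the outer semicontinuity of $\Tprox{\gf}{\gamma}{p}$ from Theorem~\ref{th:hopbfb}~\ref{th:hopbfb:proxb:conv}. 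Your version is shorter and makes the structural role of level-boundedness explicit; the paper's is self-contained and shows exactly where $f\in\mathcal{C}^2$ enters. One shared caveat worth being aware of: for the \emph{limiting} subdifferential, Theorem~10.13 (and equally the paper's sequential argument) only yields $(v,0)\in\partial\Phi(\ov{x},\ov{y})$ for \emph{some} minimizer $\ov{y}\in\Tprox{\gf}{\gamma}{p}(\ov{x})$ (a union over minimizers), whereas the ``for every $\ov{y}$'' form you state is guaranteed only for the regular subdifferential; since the paper's statement and proof carry the same imprecision, this is not a defect specific to your argument, but you should phrase the limiting inclusion as a union unless the formula is known to be independent of the choice of $\ov{y}$.
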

\begin{proof}
Let $\eta\in \widehat{\partial} \fgam{\gf}{p}{\gamma}(\ov{x})$ and $\ov{y}\in \Tprox{\gf}{\gamma}{p}(\ov{x})$ be arbitrary.
Define the function $\gh:\R^n\to \Rinf$ as 
\[
\gh(x):=f(x)+\langle \nabla f(x), \ov{y}-x\rangle+\frac{1}{p\gamma}\Vert \ov{y}-x\Vert^p.
\]
 For any $x\in \R^n$, we have
\[
\begin{aligned}
 \fgam{\gf}{p}{\gamma}(x)- \fgam{\gf}{p}{\gamma}(\ov{x})-\langle\eta, x - \ov{x}\rangle
 &\leq  f(x)+\langle\nabla f(x), \ov{y}-x\rangle+g(\ov{y})+\frac{1}{p\gamma}\Vert  \ov{y}-x\Vert^p
 \\&~~~- f(\ov{x})-\langle \nabla f(\ov{x}), \ov{y}-\ov{x}\rangle-g(\ov{y})-\frac{1}{p\gamma}\Vert  \ov{x}-\ov{y}\Vert^p -\langle\eta, x - \ov{x}\rangle
 \\&=\gh(x)-\gh(\ov{x})-\langle\eta, x - \ov{x}\rangle.
\end{aligned}
\]
Thus,
\[
0\leq {\mathop {\mathop {\bs\liminf }\limits _{ x\to \ov{x},}} \limits _{x\neq \ov{x}}}\frac{\fgam{\gf}{p}{\gamma}(x)- \fgam{\gf}{p}{\gamma}(\ov{x})-\langle\eta, x - \ov{x}\rangle}{\Vert x - \ov{x}\Vert}\leq 
{\mathop {\mathop {\bs\liminf }\limits _{ x\to \ov{x},}} \limits _{x\neq \ov{x}}}
\frac{\gh(x)-\gh(\ov{x})-\langle\eta, x - \ov{x}\rangle}{\Vert x - \ov{x}\Vert}.
\]
which implies that $\eta\in \widehat{\partial} \gh(\ov{x})$.
By \cite[Exercise 8.8 (c)]{Rockafellar09},  we obtain
$\eta=\nabla^2f(\ov{x})(\ov{y}-\ov{x})+\frac{1}{\gamma} \Vert  \ov{x}-\ov{y}\Vert^{p-2}(\ov{x}-\ov{y})$.
Now, let $\ov{\zeta}\in \partial\fgam{\gf}{p}{\gamma}(\ov{x})$. By definition, there are sequences $x^k\to \ov{x}$ and $\zeta^k\in \widehat{\partial}\fgam{\gf}{p}{\gamma}(x^k)$ such that $\fgam{\gf}{p}{\gamma}(x^k)\to \fgam{\gf}{p}{\gamma}(\ov{x})$ and $\zeta^k\to \ov{\zeta}$. For each $k$, we have 
  \[\zeta^k=\nabla^2f(x^k)(y^k-x^k)+\frac{1}{\gamma} \Vert x^k - y^k \Vert^{p-2} (x^k - y^k),\]
where $y^k\in \Tprox{\gf}{\gamma}{p}(x^k)$. By Theorem~\ref{th:hopbfb}~$\ref{th:hopbfb:proxb:conv}$, the sequence $\{y^k\}_{k\in \mathbb{N}}$ has a cluster point 
$\ov{y}\in \Tprox{\gf}{\gamma}{p}(\ov{x})$, such that $\ov{\zeta} =\nabla^2f(\ov{x})(\ov{y}-\ov{x}) \frac{1}{\gamma} \Vert \ov{x} - \ov{y} \Vert^{p-2} (\ov{x} - \ov{y})$. 
\end{proof}

The following lemma extends the argument of Lemma~D.2 and Proposition~D.3 
in \cite{Mukkamala2019Beyond} to include the proximal quadratic term. 
\begin{lemma}[Proximal update with nonnegativity and $\ell_1$ regularization]\label{lem:closedform}
Consider problem~\eqref{eq:mainp:ex:relsmooth:pp:num}, the kernel $h$ given in \eqref{eq:ex:relsmooth:pp:1} with $a=3$ and $b=\|X\|_F$, and 
the function $g$ as \eqref{eq:gform:ex:relsmooth:pp:num}.
For some $(U^k,V^k)\in\R^{m\times r}\times\R^{n\times r}$, define
\[
P^k :=\nabla_U f(U^k,V^k)-\nabla_U h(U^k,V^k), \qquad 
Q^k :=\nabla_V f(U^k,V^k)-\nabla_V h(U^k,V^k).
\]
For $\gamma>0$, consider the proximal subproblem
\begin{equation}\label{eq:prox-subproblem}
{\mathop {\mathrm{\bs\min}}\limits_{U\in\R^{m\times r}, V\in\R^{n\times r}}}  \Psi(U,V):=
\left\langle P^k,U\right\rangle+\left\langle Q^k, V\right\rangle+g(U,V)+h(U,V)
+\frac{1}{2\gamma}\left(\|U-U^k\|_F^2+\|V-V^k\|_F^2\right).
\end{equation}
Define the terms $\widehat P^k=P^k-\frac{1}{\gamma}U^k$ and $\widehat Q^k=Q^k-\frac{1}{\gamma}V^k$,
and set $\Theta_U=\widehat P^k+\lambda\,\mathbf{1}_m\mathbf{1}_r^\top$ and 
$\Theta_V=\widehat Q^k+\lambda\,\mathbf{1}_n\mathbf{1}_r^\top$, where $\mathbf{1}_d$ is the all-ones vector in $\R^d$.
Let $A_U=\|\Pi_+(-\Theta_U)\|_F$ and $A_V=\|\Pi_+(-\Theta_V)\|_F$,
where $\Pi_+(\cdot)$ denotes the projection onto the nonnegative orthant. 
Then every minimizer of \eqref{eq:prox-subproblem} is of the form
\[
U^{k+1}=r\,\Pi_+\left(-\Theta_U\right), \qquad
V^{k+1}=r\,\Pi_+\left(-\Theta_V\right),
\]
where $r\ge 0$ is the unique nonnegative real root of
\begin{equation}\label{eq:cubic}
3(A_U^2+A_V^2)\,r^3+\left(\|X\|_F+\frac{1}{\gamma}\right)r-1=0.
\end{equation}
\end{lemma}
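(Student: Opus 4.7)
The strategy is to reduce \eqref{eq:prox-subproblem} to a single scalar equation by separating the linear ``direction'' data from the radial ``magnitude'' data. First I would substitute the definition of $g$, use $\|U\|_1 = \langle \mathbf{1}_m\mathbf{1}_r^\top, U\rangle$ on $\{U\ge 0\}$ (and its analogue for $V$), expand $\tfrac{1}{2\gamma}\|U-U^k\|_F^2$, and discard $(U^k,V^k)$-only constants. The linear-in-$U$ part collapses to $\langle \widehat P^k + \lambda\mathbf{1}_m\mathbf{1}_r^\top,U\rangle = \langle \Theta_U,U\rangle$, and symmetrically $\langle \Theta_V,V\rangle$. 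Writing $\tau := \|U\|_F^2 + \|V\|_F^2$ and using $a=3$, $b=\|X\|_F$, the kernel plus the proximal quadratic simplify to
\begin{equation*}
h(U,V) + \tfrac{1}{2\gamma}\tau = \tfrac{3}{4}\tau^2 + \tfrac{1}{2}\bigl(\|X\|_F + \tfrac{1}{\gamma}\bigr)\tau,
\end{equation*}
so, up to an additive constant, \eqref{eq:prox-subproblem} becomes $\min_{U,V\ge 0}\bigl\{\langle \Theta_U,U\rangle + \langle \Theta_V,V\rangle + \tfrac{3}{4}\tau^2 + \tfrac{1}{2}(\|X\|_F + 1/\gamma)\tau\bigr\}$.

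Next, with the magnitudes $s:=\|U\|_F$ and $t:=\|V\|_F$ frozen, I would minimize over directions. For $U\ge 0$ with $\|U\|_F = s$, minimizing $\langle\Theta_U,U\rangle$ is the same as maximizing $\langle-\Theta_U,U\rangle$; the optimum can only load on indices where $(-\Theta_U)_{ij}>0$, and Cauchy--Schwarz on those indices gives value $-sA_U$, attained uniquely (when $A_U>0$) at $U = (s/A_U)\Pi_+(-\Theta_U)$. The same reduction applies to $V$, leaving the two-variable problem
\begin{equation*}
\min_{s,t\ge 0} F(s,t) := -sA_U - tA_V + \tfrac{3}{4}(s^2+t^2)^2 + \tfrac{1}{2}\bigl(\|X\|_F + \tfrac{1}{\gamma}\bigr)(s^2+t^2),
\end{equation*}
which is coercive and continuous, hence admits a minimizer.

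For the KKT step, assume first $A_U,A_V>0$. Since $\partial_s F(0,t)=-A_U<0$ and $\partial_t F(s,0)=-A_V<0$, no minimizer lies on the axes, so $(s^\star,t^\star)$ must be interior. The equations $\partial_sF=\partial_tF=0$ read $A_U = s[\,3(s^2+t^2)+\|X\|_F+1/\gamma\,]$ and $A_V = t[\,3(s^2+t^2)+\|X\|_F+1/\gamma\,]$, forcing $s/A_U = t/A_V =: r$. Substituting $s=A_U r$, $t=A_V r$ into either equation produces exactly the cubic \eqref{eq:cubic}, while the inner minimization recovers $U^{k+1} = r\,\Pi_+(-\Theta_U)$ and $V^{k+1} = r\,\Pi_+(-\Theta_V)$. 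Uniqueness of the nonnegative root follows because $\phi(r) := 3(A_U^2+A_V^2)r^3 + (\|X\|_F+1/\gamma)r - 1$ satisfies $\phi(0)=-1$, $\phi\to+\infty$, and $\phi'(r)\ge \|X\|_F+1/\gamma>0$ on $[0,\infty)$.

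The step I expect to be most delicate is the degenerate-boundary analysis needed to justify the \emph{single} formula across all cases. If $A_U=0$, then $\Theta_U\ge 0$ componentwise and every minimizer must have $U^\star = 0$; the claimed identity $U^{k+1} = r\,\Pi_+(-\Theta_U)$ then reads $U^{k+1} = r\cdot 0 = 0$ and holds vacuously for any $r$, and the $V$-subproblem collapses to the scalar analogue already handled (its first-order condition $A_V = t[3t^2 + \|X\|_F + 1/\gamma]$ with $t = A_V r$ reduces \eqref{eq:cubic} to $3A_V^2 r^3 + (\|X\|_F+1/\gamma)r - 1 = 0$, consistent with the degenerate form of \eqref{eq:cubic}). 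The case $A_V = 0$ is symmetric, and $A_U = A_V = 0$ is trivial. After this check, the formulas $U^{k+1} = r\,\Pi_+(-\Theta_U)$, $V^{k+1} = r\,\Pi_+(-\Theta_V)$ with $r$ the unique nonnegative root of \eqref{eq:cubic} describe every minimizer of \eqref{eq:prox-subproblem}.
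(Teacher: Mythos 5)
Your proposal is correct and follows essentially the same route as the paper's proof: decouple the direction from the Frobenius-norm magnitude, solve the inner linear minimization to obtain $-sA_U$ at $U=(s/A_U)\Pi_+(-\Theta_U)$, and reduce the outer problem to the stationarity conditions that force proportionality and yield the cubic \eqref{eq:cubic}. The only differences are cosmetic improvements on your side: you prove the inner direction step directly via Cauchy--Schwarz rather than citing Lemma~D.1 of Mukkamala--Ochs, and you explicitly rule out axis minimizers and treat the degenerate cases $A_U=0$ or $A_V=0$, which the paper handles only implicitly.
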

\begin{proof}
We begin by rewriting the objective function in~\eqref{eq:prox-subproblem} as
\[
\Psi(U,V)
=\left[\langle P^k,U\rangle+g_U(U)+\frac{1}{2\gamma}\|U-U^k\|_F^2\right]
+\left[\langle Q^k,V\rangle+g_V(V)+\frac{1}{2\gamma}\|V-V^k\|_F^2\right]
+h(U,V),
\]
where $g_U(U)=\iota_{\{U\ge0\}}(U)+\lambda\|U\|_1$ and 
$g_V(V)=\iota_{\{V\ge0\}}(V)+\lambda\|V\|_1$. 
All terms are separable in $(U,V)$ except $h(U,V)$, which couples them
through the term $\|U\|_F^2+\|V\|_F^2$.
For any fixed Frobenius norms $t_U=\|U\|_F$ and $t_V=\|V\|_F$, 
the function $h(U,V)$ is constant, and the minimization over  
 $U$ and $V$ decouples as
\[
{\mathop {\mathrm{\bs\min}}\limits_{U\ge0,\,\|U\|_F=t_U}} 
  \left\{\langle P^k,U\rangle+\lambda\|U\|_1
  +\frac{1}{2\gamma}\|U-U^k\|_F^2\right\},
\qquad
{\mathop {\mathrm{\bs\min}}\limits_{V\ge0,\,\|V\|_F=t_V}}
  \left\{\langle Q^k,V\rangle+\lambda\|V\|_1
  +\tfrac{1}{2\gamma}\|V-V^k\|_F^2\right\}.
\]
Expanding 
$\|U-U^k\|_F^2=\|U\|_F^2-2\langle U^k,U\rangle+\|U^k\|_F^2$
and discarding constants independent of $U$,
the objective depending on $U$ can be rewritten as
\[
\langle P^k-\tfrac{1}{\gamma}U^k,\,U\rangle+\lambda\|U\|_1
+\tfrac{1}{2\gamma}\|U\|_F^2+{\rm const}.
\]
The term $\tfrac{1}{2\gamma}\|U\|_F^2$ depends only on $t_U$
and will be handled together with $h(U,V)$ in the outer minimization.
By defining $\widehat P^k :=P^k-\tfrac{1}{\gamma}U^k$ and $\widehat Q^k :=Q^k-\tfrac{1}{\gamma}V^k$,
 the inner subproblems for fixed $(t_U,t_V)$ reduce to
\[
{\mathop {\mathrm{\bs\min}}\limits_{U\ge0,\,\|U\|_F\leq t_U}} \,
   \left\langle \widehat P^k,U \right\rangle+\lambda\|U\|_1,
\qquad
{\mathop {\mathrm{\bs\min}}\limits_{V\ge0,\,\|V\|_F\leq t_V}} \,
   \left\langle \widehat Q^k,V \right\rangle+\lambda\|V\|_1.
\]
These two problems are completely independent and
their solutions determine the optimal solutions for $U$ and $V$, 
while the shared norms $(t_U,t_V)$ will later be optimized jointly
through the coupling term $h(U,V)$.
Hence, the problem \eqref{eq:prox-subproblem} can be decomposed as a two-level minimization
\begin{equation}\label{eq:profclosedform}
{\mathop {\mathrm{\bs\min}}\limits_{U\in\R^{m\times r}, V\in\R^{n\times r}}}  \Psi(U,V)=
{\mathop {\mathrm{\bs\min}}\limits_{t_U,t_V \ge 0}}
\left[
{\mathop {\mathrm{\bs\min}}\limits_{U \ge 0,\, \|U\|_F \le t_U}}
      \psi_U(U)
      +
  {\mathop {\mathrm{\bs\min}}\limits_{V \ge 0,\, \|V\|_F \le t_V}}  
      \psi_V(V)
    +
    h(t_U,t_V)
  \right],
\end{equation}
where
\[
\psi_U(U) = \langle \widehat P^k, U \rangle + \lambda \|U\|_1, 
\qquad
\psi_V(V) = \langle \widehat Q^k, V \rangle + \lambda \|V\|_1,
\]
and
\[
h(t_U,t_V)
= \frac{a}{4}(t_U^2+t_V^2)^2
+ \frac{b}{2}(t_U^2+t_V^2)
+ \frac{1}{2\gamma}(t_U^2+t_V^2).
\]
Since $\|U\|_1 = \langle \mathbf{1}_m\mathbf{1}_r^\top, U \rangle$ for $U\ge0$,
each subproblem can be written as
\[
{\mathop {\mathrm{\bs\min}}\limits_{U\ge0,\,\|U\|_F\leq t_U}} 
    \left\langle \Theta_U, U \right\rangle,
\qquad 
\Theta_U := \widehat P^k + \lambda\,\mathbf{1}_m\mathbf{1}_r^\top.
\]
By  \cite[Lemma~D.1]{Mukkamala2019Beyond}, the minimum value is 
$-t_U\|\Pi_+(-\Theta_U)\|_F$,
and the corresponding minimizer is
\[
U^*(t_U) = 
t_U\,\frac{\Pi_+(-\Theta_U)}{A_U},
\qquad 
A_U := \|\Pi_+(-\Theta_U)\|_F.
\]
An analogous argument for $V$ yields 
$V^*(t_V) = t_V\,\Pi_+(-\Theta_V)/A_V$ with 
$A_V = \|\Pi_+(-\Theta_V)\|_F$.
Substituting the optimal solutions into~\eqref{eq:profclosedform}
reduces the problem to
\[
{\mathop {\mathrm{\bs\min}}\limits_{t_U,t_V\ge0}} 
  -t_UA_U - t_VA_V
  + \tfrac{a}{4}(t_U^2+t_V^2)^2
  + \tfrac{1}{2}\big(b+\tfrac{1}{\gamma}\big)(t_U^2+t_V^2).
\]
The first-order optimality conditions are
\begin{align*}
-A_U + a(t_U^2+t_V^2)t_U + \big(b+\tfrac{1}{\gamma}\big)t_U &= 0,\\
-A_V + a(t_U^2+t_V^2)t_V + \big(b+\tfrac{1}{\gamma}\big)t_V &= 0.
\end{align*}
If $(A_U,A_V)\neq(0,0)$, these imply the proportionality 
$t_U=rA_U$ and $t_V=rA_V$ for some $r\ge0$.
Substituting these relations into either equation gives the cubic
\[
a(A_U^2+A_V^2)\,r^3 + \big(b+\tfrac{1}{\gamma}\big)r - 1 = 0.
\]
Because the left-hand side is strictly increasing on $[0,\infty)$, 
there exists a unique nonnegative real root $r$.
Finally, substituting $t_U=rA_U$ and $t_V=rA_V$ into 
$U^*(t_U)$ and $V^*(t_V)$ yields
\[
U^{k+1}=r\,\Pi_+(-\Theta_U), 
\qquad 
V^{k+1}=r\,\Pi_+(-\Theta_V),
\]
which are the desired updates. 
\end{proof}
\section{Omitted proofs}
\label{sec:app:proofs}

\begin{proof}[\textbf{Proof of Theorem~\ref{th:basichifbe}}]
\ref{th:basichifbe:dom} HiFBE can be written as
\[
   \fgam{\gf}{p}{\gamma}(x)=
   \mathop{\bs{\inf}}\limits_{\substack{
                                    (v,w)\in \R^n\times\R^n\\ 
                                    v-w=x}}\left\{f(x)+\langle \nabla f(x) , w\rangle +g(v)+\frac{1}{p\gamma}\Vert w\Vert^p\right\}.
   \]
Since $\dom{\langle \nabla f(x) , w\rangle +\frac{1}{p\gamma}\Vert w\Vert^p}=\R^n$ and $g$ is proper, we get $\dom{\fgam{\gf}{p}{\gamma}}=\R^n$.
The second part is followed by setting $y=x$ in the inner term in the curly brackets of \eqref{HFBE}.
\\
\ref{th:basichifbe:ineqforp}
It holds that
  \[
\gf(y)=f(y)+g(y) \leq f(x) + \langle \nabla f(x), y - x \rangle +\frac{L_p}{p}\Vert x - y \Vert ^{p}+g(y), \quad \forall x, y\in\R^n.
\]
Regarding $L_p<\frac{1}{\gamma}$, the claim obtains. The second claim follows
from this and $\fgam{\gf}{p}{\gamma}(x)=\ell(x,y)+\frac{1}{p\gamma}\Vert x - y\Vert^p$ for $y\in \Tprox{\gf}{\gamma}{p} (x)$.
\\
\ref{th:basichifbe:infimforp}
From Assertions~\ref{th:basichifbe:dom}~and~\ref{th:basichifbe:ineqforp}, $\bs\inf_{z\in\R^n}\gf(z)\leq \fgam{\gf}{p}{\gamma}(x)\leq \gf(x)$, for each $x\in\R^n$. 
The other inequality obtains from the definitions.
\\
\ref{th:basichifbe:infim2forp} This follows from part $\ref{th:basichifbe:infimforp}$.
\\
\ref{th:basichifbe:finite}  Let $\gamma\in (0, \gamma^{g, p})$ and $x\in\R^n$ be arbitrary. Then, there exists some $\lambda\in (0, 1)$ such that $\gamma<\lambda \gamma^{g, p}$. Now, from the convexity of function $y\mapsto f(x)+\langle \nabla f(x) , y - x\rangle $ for each $x\in \R^n$ and  Fact~\ref{fact:level-bound+locally uniform},
we have
\[
\begin{aligned}
-\infty&< \mathop{\bs{\inf}}\limits_{y\in \R^n} \left\{ g(y)+\frac{1}{p\frac{\gamma}{\lambda}}\Vert x- y\Vert^p\right\}
+ \mathop{\bs{\inf}}\limits_{y\in \R^n} \left\{f(x)+\langle \nabla f(x) , y - x\rangle+\frac{1}{p\frac{\gamma}{1-\lambda}}\Vert x- y\Vert^p\right\}
\nonumber\\
 &\leq
 \mathop{\bs{\inf}}\limits_{y\in \R^n} \left\{f(x)+\langle \nabla f(x) , y - x\rangle+g(y)+\frac{1}{p\gamma}\Vert x- y\Vert^p\right\}
 =\fgam{\gf}{p}{\gamma}(x).
\end{aligned}
\]
Together with Assertions~\ref{th:basichifbe:dom}, this implies $\fgam{\gf}{p}{\gamma}(x)\neq \pm\infty$.
\\
\ref{th:basichifbe:levelunif} Assume that $\Psi(y, x, \gamma)$  is not
 level-bounded in $y$ locally uniformly in $(x, \gamma)$. Then, there exist $(\ov{x}, \ov{\gamma})\in \R^n\times (0, \gamma^{g, p})$, $\theta\in \R$, sequences $(x^k, \gamma^k)\to (\ov{x}, \ov{\gamma})$, and $\{y^k\}_{k\in \mathbb{N}}$ with $\Vert y^k\Vert\to \infty$ such that for each $k\in \mathbb{N}$,
\begin{equation}\label{eq1:level-bound+locally uniform}
\Psi(y^k, x^k,\gamma^k)=f(x^k)+\langle \nabla f(x^k), y^k - x^k\rangle+g(y^k)+\frac{1}{p\gamma^k}\Vert x^k- y^k\Vert^p\leq \theta.
\end{equation}
 On the other hand, from Assertion~\ref{th:basichifbe:finite}, for $\hat{\gamma}\in (\ov{\gamma}, \gamma^{g, p})$,  there exists some $\beta\in \R^n$ such that
\[
f(\ov{x})+\langle \nabla f(\ov{x}), y^k -\ov{x}\rangle+g(y^k)+\frac{1}{p\hat{\gamma}}\Vert \ov{x}- y^k\Vert^p\geq \beta.
\]
   Together with \eqref{eq1:level-bound+locally uniform}, this implies
\[
\begin{aligned}
f(x^k) - f(\ov{x})+\langle \nabla f(x^k), y^k - x^k\rangle - \langle \nabla f(\ov{x}), y^k -\ov{x}\rangle 
+ \frac{1}{p\gamma^k}\Vert x^k- y^k\Vert^p-\frac{1}{p\hat{\gamma}}\Vert \ov{x}-y^k\Vert^p\leq \theta -\beta.
\end{aligned}
\]
Dividing both sides of above inequality by $\Vert y^k\Vert^p$ and taking $k\to +\infty$, we will have
$
0<\frac{1}{p\ov{\gamma}}-\frac{1}{p\hat{\gamma}}\leq 0,
$
which is a contradiction. As $\ell$ is lsc, then $\Psi$ has this property as well.
\\
\ref{th:basichifbe:con} This follows from Assertion~\ref{th:basichifbe:levelunif} and \cite[Theorem 1.17]{Rockafellar09}.
\\
\ref{th:basichifbe:finproxb}
From the assumption, there exists some $\beta_0\in \R$ such that for each $y\in \R^n$, we have
\begin{equation}\label{eq:lem:finproxb}
\beta_0\leq \fgam{\gf}{p}{\gamma}(\ov{x}) \leq f(\ov{x})+\langle \nabla f(\ov{x}), y - \ov{x}\rangle + g(y)+\frac{1}{p\gamma}\Vert y- \ov{x}\Vert^p.
\end{equation}
We claim that $\fgam{g}{\gamma,p}{}(\ov{x})>-\infty$. By contradiction, assume that  $\fgam{g}{\gamma,p}{}(\ov{x})=-\infty$. Thus, there is a sequence
$\{y^k\}_{k\in \mathbb{N}}\subseteq\R^n$ such that $\bs\lim_{k\to\infty}  \left(g(y^k)+\frac{1}{p\gamma}\Vert y^k- \ov{x}\Vert^p\right)=-\infty$. This shows that 
$\{\Vert y^k- \ov{x}\Vert \}_{k\in \mathbb{N}}$ is bounded from above. Thus 
\[
\mathop{\bs\lim}\limits_{k\to \infty} \langle \nabla f(\ov{x}), y^k - \ov{x}\rangle\leq \mathop{\bs\lim}\limits_{k\to \infty} \Vert \nabla f(\ov{x})\Vert \Vert y^k - \ov{x}\Vert<\infty.
\]
From these and \eqref{eq:lem:finproxb}, we get
\[
\beta_0\leq  \mathop{\bs\lim}\limits_{k\to\infty}  \left( f(\ov{x})+\langle \nabla f(\ov{x}), y^k - \ov{x}\rangle + g(y^k)+\frac{1}{p\gamma}\Vert y^k- \ov{x}\Vert^p\right)=-\infty,
\]
which is a contradiction.
\end{proof}

\begin{proof}[\textbf{Proof of Proposition~\ref{pro:KLp:hope}}]
Consider the auxiliary function $\Phi(x, y) := \ell(x, y) + \frac{1}{p \gamma} \| x - y \Vert^p$ and let $\ov{x}\in \bs{\rm Fix}(\Tprox{\gf}{\widehat{\gamma}}{p})$ be arbitrary. 
From Proposition~\ref{prop:relcrit}, $\ov{x}\in \bs{{\rm Fcrit}}(\gf)$. Additionally, 
$(\ov{x}, \ov{x})\in\Dom{\partial \Phi}$.
We have
\begin{equation}\label{eq:KLdefinition:subphi}
\partial \ell(x,y)=\left[\begin{matrix}
\nabla^2 f(x)(y-x)\\ \nabla f(x)+\partial g(y)
\end{matrix}\right],
\qquad
\partial \Phi(x,y)=\left[\begin{matrix}
\nabla^2 f(x)(y-x)+
\frac{1}{\gamma}\Vert x -y\Vert^{p-2} (x-y)\\ \nabla f(x)+\partial g(y)-\frac{1}{\gamma}\Vert x - y\Vert^{p-2} (x - y)
\end{matrix}\right].
\end{equation}
From the assumption,
there exist constants $c, r>0$  and $\eta\in (0, +\infty]$ such 
that 
\begin{equation}\label{eq:theorem:KLdefinition}
\dist^{\frac{1}{\theta}}(0, \partial \ell(x,y)))\geq c\left\vert\ell(x,y)-\ell(\ov{x}, \ov{x})\right\vert,~~\forall (x,y)\in \mb((\ov{x}, \ov{x}); r)\cap\Dom{\partial \ell},
\end{equation}
with $0<\left\vert \ell(x,y)-\ell(\ov{x}, \ov{x})\right\vert<\eta$.
Hence, 
\begin{equation}\label{eq:theorem:KLdefinition2}
\left(\Vert \nabla^2 f(x)(y-x)\Vert+\mathop{\bs{\inf}}\limits_{\zeta\in \partial g(y)}\Vert\nabla f(x)+\zeta\Vert\right)^{\frac{1}{\theta}}\geq c\left\vert\ell(x,y)-\ell(\ov{x}, \ov{x})\right\vert,~~\forall (x,y)\in \mb((\ov{x}, \ov{x}); r)\cap\Dom{\partial \ell},
\end{equation}
with $0<\left\vert \ell(x,y)-\ell(\ov{x}, \ov{x})\right\vert<\eta$.
Using \eqref{eq:KLdefinition:subphi} and \cite[Lemma~2.2]{Li18}, there exists a constant $c_0>0$, such that for $\lambda\in (0,\frac{1}{2})$, setting
$\eta_1=\lambda^{p-1}$ and $\eta_2=\left(\frac{\lambda}{1-\lambda}\right)^{p-1}$, we obtain for all $(x,y)\in \Dom{\partial \Phi}$, 
\begin{align*}
\dist^{\frac{1}{\theta}}(0, \partial \Phi(x,y))&\geq c_0\left(
\Vert \nabla^2 f(x)(y-x)\Vert^{\frac{1}{\theta}}+
(\frac{1}{\gamma})^{\frac{1}{\theta}}\Vert x - y\Vert^{\frac{p-1}{\theta}}+\mathop{\bs{\inf}}\limits_{\zeta\in \partial g(y)}\Vert \nabla f(x)+\zeta -\frac{1}{\gamma}\Vert x - y\Vert^{p-2} (x-y)\Vert^{\frac{1}{\theta}}\right)
\\&\overset{(i)}{\geq} c_0\left(
\Vert \nabla^2 f(x)(y-x)\Vert^{\frac{1}{\theta}}+
(\frac{1}{\gamma})^{\frac{1}{\theta}}\Vert x - y\Vert^{\frac{p-1}{\theta}}+\mathop{\bs{\inf}}\limits_{\zeta\in \partial g(y)}\eta_1\Vert \nabla f(x)+ \zeta\Vert^{\frac{1}{\theta}}-\eta_2(\frac{1}{\gamma})^{\frac{1}{\theta}}\Vert x - y\Vert^{\frac{p-1}{\theta}}\right)
\\&\geq c_1\left(\Vert \nabla^2 f(x)(y-x)\Vert^{\frac{1}{\theta}}+\mathop{\bs{\inf}}\limits_{\zeta\in \partial g(y)}\Vert\nabla f(x)+ \zeta\Vert^{\frac{1}{\theta}}+\Vert x - y\Vert^{\frac{p-1}{\theta}}\right),
\end{align*}
where
$c_1:=c_0~\bs\min\{\eta_1, (1-\eta_2)\left(\frac{1}{\gamma}\right)^{\frac{1}{\theta}}\}$, and $(i)$ follows from the fact that for each $a, b\in \R^n$ and $\lambda\in (0,1)$ and $p\geq 1$,  \cite[Lemma~2.1]{Kabgani24itsopt}
\[
\Vert a+b\Vert^p\geq \lambda^{p-1}\Vert a\Vert^p-\left(\frac{\lambda}{1-\lambda}\right)^{p-1}\Vert b\Vert^p.
\]
For $(x, y) \in \mb((\ov{x}, \ov{x}); r) \cap \Dom{\partial \Phi}$ with $0 < \left\vert \Phi(x, y) - \Phi(\ov{x}, \ov{x}) \right\vert< \eta$, shrink $r$ such that $2r < 1$. Since $\frac{p-1}{\theta} \leq p$ for $\theta \geq \frac{p-1}{p}$, and $\Vert x - y \Vert \leq 2r < 1$, we have
\begin{align*}
\dist^{\frac{1}{\theta}}\left(0, \partial \Phi(x,y)\right)&
\overset{(i)}{\geq} c_1\left(\Vert \nabla^2 f(x)(y-x)\Vert^{\frac{1}{\theta}}+\mathop{\bs{\inf}}\limits_{\zeta\in \partial g(y)}\Vert\nabla f(x)+ \zeta\Vert^{\frac{1}{\theta}}+\Vert x - y\Vert^{p}\right)
\\
&\geq c_1\left(2^{\frac{\theta - 1}{\theta}}\left(\Vert \nabla^2 f(x)(y-x)\Vert+\mathop{\bs{\inf}}\limits_{\zeta\in \partial g(y)}\Vert\nabla f(x)+ \zeta\Vert\right)^{\frac{1}{\theta}}+\Vert x - y\Vert^{p}\right)
\\
&\geq c_2\left(c^{-1}\left(\Vert \nabla^2 f(x)(y-x)\Vert+\mathop{\bs{\inf}}\limits_{\zeta\in \partial g(y)}\Vert\nabla f(x)+ \zeta\Vert\right)^{\frac{1}{\theta}}+\frac{1}{p\gamma}\Vert x - y\Vert^{p}\right),
\end{align*}
where $c_2:=c_1c~\bs\min \{2^{\frac{\theta - 1}{\theta}},c^{-1}p\gamma\}$ and $(i)$ follows from $\Vert x-y\Vert\leq 2r<1$ and $\frac{p-1}{\theta}\leq p$.
Together with \eqref{eq:theorem:KLdefinition}, this yields
\begin{align}\label{eq:theorem:KLdefinitionb}
\dist^{\frac{1}{\theta}}(0, \partial \Phi(x,y))
&\geq c_2(\left\vert\ell(x,y)-\ell(\ov{x}, \ov{x})\right\vert+\frac{1}{p\gamma}\Vert x - y\Vert^{p})
\nonumber\\&\geq c_2(\vert\ell(x,y)-\ell(\ov{x}, \ov{x})+\frac{1}{p\gamma}\Vert x - y\Vert^{p}\vert)
=c_2 \vert\Phi(x,y)-\Phi(\ov{x}, \ov{x})\vert,
\end{align}
i.e., $\Phi$ satisfies the KL property of the exponent $\theta$ at $(\ov{x}, \ov{x})$. 
 Given that $\ov{x}\in \bs{\rm Fix}(\Tprox{\gf}{\gamma}{p})$, for any $y\in\R^n$ and for any $\gamma\in (0, \widehat{\gamma})$, 
  \begin{align*}
\gf(\ov{x})=\fgam{\gf}{p}{\widehat{\gamma}}(\ov{x})\leq  f(\ov{x})+\langle \nabla f(\ov{x}), y- \ov{x}\rangle+g(y)+\frac{1}{p\widehat{\gamma}}\Vert y - \ov{x}\Vert^p
<f(\ov{x})+\langle \nabla f(\ov{x}), y- \ov{x}\rangle+g(y)+\frac{1}{p\gamma}\Vert y - \ov{x}\Vert^p.
  \end{align*}
Thus,  $\Tprox{\gf}{\gamma}{p}(\ov{x})=\{\ov{x}\}$.
From the KL property of $\Phi$  of the exponent $\theta$ at $(\ov{x}, \ov{x})$, 
\cite[Theorem~3.1]{Yu2022} implies that $\fgam{\gf}{p}{\gamma}$  satisfies the KL property with the exponent $\theta$ at $\ov{x}$.
\end{proof}

\bibliographystyle{spbasic}
\bibliography{references}

\begin{thebibliography}{99}
\providecommand{\natexlab}[1]{#1}
\providecommand{\url}[1]{{#1}}
\providecommand{\urlprefix}{URL }
\expandafter\ifx\csname urlstyle\endcsname\relax
  \providecommand{\doi}[1]{DOI~\discretionary{}{}{}#1}\else
  \providecommand{\doi}{DOI~\discretionary{}{}{}\begingroup
  \urlstyle{rm}\Url}\fi
\providecommand{\eprint}[2][]{\url{#2}}

\bibitem[{Absil et~al(2005)Absil, Mahony, and Andrews}]{absil2005convergence}
Absil PA, Mahony R, Andrews B (2005) Convergence of the iterates of descent
  methods for analytic cost functions. SIAM Journal on Optimization 16:531--547

\bibitem[{Ahookhosh(2019)}]{ahookhosh2019accelerated}
Ahookhosh M (2019) Accelerated first-order methods for large-scale convex
  optimization: nearly optimal complexity under strong convexity. Mathematical
  Methods of Operations Research 89(3):319--353

\bibitem[{Ahookhosh and Nesterov(2024)}]{Ahookhosh24}
Ahookhosh M, Nesterov Y (2024) High-order methods beyond the classical
  complexity bounds: inexact high-order proximal-point methods. Mathematical
  Programming 208:365--407

\bibitem[{Ahookhosh et~al(2021{\natexlab{a}})Ahookhosh, Hien, Gillis, and
  Patrinos}]{ahookhosh2021block}
Ahookhosh M, Hien LTK, Gillis N, Patrinos P (2021{\natexlab{a}}) A block
  inertial {B}regman proximal algorithm for nonsmooth nonconvex problems with
  application to symmetric nonnegative matrix tri-factorization. Journal of
  Optimization Theory and Applications 190:234--258

\bibitem[{Ahookhosh et~al(2021{\natexlab{b}})Ahookhosh, Hien, Gillis, and
  Patrinos}]{ahookhosh2021multi}
Ahookhosh M, Hien LTK, Gillis N, Patrinos P (2021{\natexlab{b}}) Multi-block
  {B}regman proximal alternating linearized minimization and its application to
  orthogonal nonnegative matrix factorization. Computational Optimization and
  Applications 79:681--715

\bibitem[{Ahookhosh et~al(2021{\natexlab{c}})Ahookhosh, Themelis, and
  Patrinos}]{Ahookhosh21}
Ahookhosh M, Themelis A, Patrinos P (2021{\natexlab{c}}) A {B}regman
  forward-backward linesearch algorithm for nonconvex composite optimization:
  Superlinear convergence to nonisolated local minima. SIAM Journal on
  Optimization 31(1):653--685

\bibitem[{Ahookhosh et~al(2025)Ahookhosh, Iusem, Kabgani, and
  Lara}]{Ahookhosh2025}
Ahookhosh M, Iusem A, Kabgani A, Lara F (2025) Asymptotic convergence analysis
  of high-order proximal-point methods beyond sublinear rates.
  \urlprefix\url{https://doi.org/10.48550/arXiv.2505.20484}

\bibitem[{Arag\'{o}n-Artacho et~al(2018)Arag\'{o}n-Artacho, Fleming, and
  Vuong}]{AragonArtacho2018Accelerating}
Arag\'{o}n-Artacho FJ, Fleming RMT, Vuong PT (2018) Accelerating the {DC}
  algorithm for smooth functions. Mathematical Programming 169:95--118

\bibitem[{Arag\'{o}n-Artacho et~al(2024)Arag\'{o}n-Artacho, Mordukhovich, and
  Pérez-Aros}]{AragonArtacho2024Coderivative}
Arag\'{o}n-Artacho FJ, Mordukhovich BS, Pérez-Aros P (2024) Coderivative-based
  semi-{N}ewton method in nonsmooth difference programming. Mathematical
  Programming

\bibitem[{Attouch and Peypouquet(2016)}]{attouch2016rate}
Attouch H, Peypouquet J (2016) The rate of convergence of {N}esterov's
  accelerated forward-backward method is actually faster than $1/k^2$. SIAM
  Journal on Optimization 26(3):1824--1834

\bibitem[{Attouch et~al(2010)Attouch, Bolte, Redont, and
  Soubeyran}]{attouch2010proximal}
Attouch H, Bolte J, Redont P, Soubeyran A (2010) Proximal alternating
  minimization and projection methods for nonconvex problems: An approach based
  on the {K}urdyka-Łojasiewicz inequality. Mathematics of operations research
  35:438--457

\bibitem[{Attouch et~al(2013)Attouch, Bolte, and Svaiter}]{Attouch2013}
Attouch H, Bolte J, Svaiter B (2013) Convergence of descent methods for
  semi-algebraic and tame problems: proximal algorithms, forward-backward
  splitting, and regularized {G}auss-{S}eidel methods. Mathematical Programming
  137:91--129

\bibitem[{Barré et~al(2023)Barré, Taylor, and Bach}]{barre2023principled}
Barré M, Taylor AB, Bach F (2023) Principled analyses and design of
  first-order methods with inexact proximal operators. Mathematical Programming
  201:185--230

\bibitem[{Bauschke and Combettes(2017)}]{Bauschke17}
Bauschke H, Combettes P (2017) Convex Analysis and Monotone Operator Theory in
  Hilbert Spaces, 2nd edn. Springer International Publishing

\bibitem[{Bauschke et~al(2016)Bauschke, Bolte, and
  Teboulle}]{bauschke2017descent}
Bauschke HH, Bolte J, Teboulle M (2016) A descent lemma beyond {L}ipschitz
  gradient continuity: First-order methods revisited and applications.
  Mathematics of Operations Research 42(2):330--348

\bibitem[{Bazaraa et~al(2006)Bazaraa, Sherali, and
  Shetty}]{bazaraa2006nonlinear}
Bazaraa MS, Sherali HD, Shetty CM (2006) Nonlinear Programming: Theory and
  Algorithms. John Wiley \& Sons

\bibitem[{Beck(2017)}]{beck2017first}
Beck A (2017) First-Order Methods in Optimization. Society for Industrial and
  Applied Mathematics

\bibitem[{Beck and Hallak(2019)}]{beck2019optimization}
Beck A, Hallak N (2019) Optimization problems involving group sparsity terms.
  Mathematical Programming 178:39--67

\bibitem[{Beck and Teboulle(2009)}]{beck2009fast}
Beck A, Teboulle M (2009) A fast iterative shrinkage-thresholding algorithm for
  linear inverse problems. SIAM Journal on Imaging Sciences 2(1):183--202

\bibitem[{Berger et~al(2020)Berger, Absil, Jungers, and
  Nesterov}]{Berger2020Quality}
Berger GO, Absil PA, Jungers RM, Nesterov Y (2020) On the quality of
  first-order approximation of functions with {H}{\"o}lder continuous gradient.
  Journal of Optimization Theory and Applications 185:17--33

\bibitem[{Bertsekas(1999)}]{bertsekas1999nonlinear}
Bertsekas DP (1999) Nonlinear Programming, 2nd edn. Athena Scientific, Belmont,
  MA

\bibitem[{Bolte et~al(2007{\natexlab{a}})Bolte, Daniilidis, Lewis, and
  Shiota}]{Bolte2007Clarke}
Bolte J, Daniilidis A, Lewis A, Shiota M (2007{\natexlab{a}}) Clarke
  subgradients of stratifiable functions. SIAM Journal on Optimization
  18:556--572

\bibitem[{Bolte et~al(2007{\natexlab{b}})Bolte, Daniilidis, and
  Lewis}]{bolte2007lojasiewicz}
Bolte J, Daniilidis A, Lewis AS (2007{\natexlab{b}}) The {Ł}ojasiewicz
  inequality for nonsmooth subanalytic functions with applications to
  subgradient dynamical systems. SIAM Journal on Optimization 17:1205--1223

\bibitem[{Bolte et~al(2014)Bolte, Sabach, and Teboulle}]{Bolte2014}
Bolte J, Sabach S, Teboulle M (2014) Proximal alternating linearized
  minimization for nonconvex and nonsmooth problems. Mathematical Programming
  146:459--494

\bibitem[{Bolte et~al(2017)Bolte, Nguyen, Peypouquet, and
  Suter}]{bolte2017from}
Bolte J, Nguyen TP, Peypouquet J, Suter B (2017) From error bounds to the
  complexity of first-order descent methods for convex functions. Mathematical
  Programming 165:471--507

\bibitem[{Bolte et~al(2018)Bolte, Sabach, Teboulle, and
  Vaisbourd}]{bolte2018first}
Bolte J, Sabach S, Teboulle M, Vaisbourd Y (2018) First order methods beyond
  convexity and {L}ipschitz gradient continuity with applications to quadratic
  inverse problems. SIAM Journal on Optimization 28(3):2131--2151

\bibitem[{Bolte et~al(2023)Bolte, Glaudin, Pauwels, and
  Serrurier}]{bolte2023backtrack}
Bolte J, Glaudin L, Pauwels E, Serrurier M (2023) {The backtrack {H}{\"o}lder
  gradient method with application to min-max and min-min problems}. {Open
  Journal of Mathematical Optimization} 4:1--17

\bibitem[{Bonettini et~al(2025)Bonettini, Franchini, Pezzi, and
  Prato}]{bonettini2025linesearch}
Bonettini S, Franchini G, Pezzi D, Prato M (2025) Linesearch-enhanced
  forward--backward methods for inexact nonconvex scenarios. SIAM Journal on
  Imaging Sciences 18(2):1314--1343

\bibitem[{Bourkhissi and Necoara(2025)}]{bourkhissi2025convergence}
Bourkhissi LE, Necoara I (2025) Convergence analysis of linearized $\ell_q $
  penalty methods for nonconvex optimization with nonlinear equality
  constraints. arXiv preprint arXiv:250308522

\bibitem[{Bredies(2009)}]{Bredies09}
Bredies K (2009) A forward–backward splitting algorithm for the minimization
  of non-smooth convex functionals in {B}anach space. Inverse Problems
  25:015005

\bibitem[{Brunet et~al(2004)Brunet, Tamayo, Golub, and
  Mesirov}]{Brunet04Metagenes}
Brunet JP, Tamayo P, Golub TR, Mesirov JP (2004) Metagenes and molecular
  pattern discovery using matrix factorization. Proceedings of the National
  Academy of Sciences 101(12):4164--4169

\bibitem[{Böhm and Wright(2021)}]{Bohm21}
Böhm A, Wright SJ (2021) Variable smoothing for weakly convex composite
  functions. Journal of Optimization Theory and Applications 188:628--649

\bibitem[{Cartis et~al(2017)Cartis, Gould, and Toint}]{Cartis2017Worst}
Cartis C, Gould N, Toint P (2017) Worst-case evaluation complexity of
  regularization methods for smooth unconstrained optimization using
  {H}\"{o}lder continuous gradients. Optimization Methods and Software
  32:1273--1298

\bibitem[{Chen and Teboulle(1993)}]{chen1993convergence}
Chen G, Teboulle M (1993) Convergence analysis of a proximal-like minimization
  algorithm using {B}regman functions. SIAM Journal on Optimization
  3(3):538--543

\bibitem[{Chen and Gu(2014)}]{Chen2014Convergence}
Chen L, Gu Y (2014) The convergence guarantees of a non-convex approach for
  sparse recovery. IEEE Transactions on Signal Processing 62:3754--3767

\bibitem[{Chierichetti et~al(2017)Chierichetti, Gollapudi, Kumar, Lattanzi,
  Panigrahy, and Woodruff}]{chierichetti2017algorithms}
Chierichetti F, Gollapudi S, Kumar R, Lattanzi S, Panigrahy R, Woodruff DP
  (2017) Algorithms for $\ell_p $ low-rank approximation. In: Precup D, Teh YW
  (eds) Proceedings of the 34th International Conference on Machine Learning,
  PMLR, Proceedings of Machine Learning Research, vol~70, pp 806--814

\bibitem[{Combettes and Pesquet(2011)}]{combettes2011proximal}
Combettes PL, Pesquet JC (2011) Proximal splitting methods in signal
  processing. In: Bauschke HH, Burachik RS, Combettes PL, Elser V, Luke DR,
  Wolkowicz H (eds) Fixed-Point Algorithms for Inverse Problems in Science and
  Engineering, Springer Optimization and Its Applications, vol~49, Springer, pp
  185--212

\bibitem[{Combettes and Wajs(2005)}]{combettes2005signal}
Combettes PL, Wajs VR (2005) Signal recovery by proximal forward-backward
  splitting. Multiscale Modeling \& Simulation 4(4):1168--1200

\bibitem[{Daniilidis and Malick(2005)}]{daniilidis2005filling}
Daniilidis A, Malick J (2005) Filling the gap between lower-${C}^1$ and
  lower-${C}^2$ functions. Journal of Convex Analysis 12:315--329

\bibitem[{De~Marchi et~al(2023)De~Marchi, Jia, Kanzow, and
  Mehlitz}]{demarchi2023constrained}
De~Marchi A, Jia X, Kanzow C, Mehlitz P (2023) Constrained composite
  optimization and augmented {L}agrangian methods. Mathematical Programming
  201:863--896

\bibitem[{Del~Pia and Ma(2022)}]{delpia2022proximity}
Del~Pia A, Ma M (2022) Proximity in concave integer quadratic programming.
  Mathematical Programming 194:871--900

\bibitem[{Dragomir et~al(2022)Dragomir, Taylor, d’Aspremont, and
  Bolte}]{dragomir2022optimal}
Dragomir R, Taylor A, d’Aspremont A, Bolte J (2022) Optimal complexity and
  certification of {B}regman first-order methods. Mathematical Programming
  194:41--83

\bibitem[{Dvurechensky(2022)}]{Dvurechenskii2022}
Dvurechensky PE (2022) A gradient method with inexact oracle for composite
  nonconvex optimization. Computer Research and Modeling 14:321--334

\bibitem[{Fukushima and Mine(1981)}]{fukushima1981generalized}
Fukushima M, Mine H (1981) A generalized proximal point algorithm for certain
  non-convex minimization problems. International Journal of Systems Science
  12(8):989--1000

\bibitem[{Fukushima and Qi(1996)}]{fukushima1996globally}
Fukushima M, Qi L (1996) A globally and superlinearly convergent algorithm for
  nonsmooth convex minimization. SIAM Journal on Optimization 6(4):1106--1120

\bibitem[{Goujon et~al(2024)Goujon, Neumayer, and Unser}]{Goujon2024Learning}
Goujon A, Neumayer S, Unser M (2024) Learning weakly convex regularizers for
  convergent image-reconstruction algorithms. SIAM Journal on Imaging Sciences
  17:91--115

\bibitem[{Hiriart-Urruty(1985)}]{HiriartUrruty1985dc}
Hiriart-Urruty JB (1985) Generalized differentiability, duality and
  optimization for problems dealing with differences of convex functions. In:
  Ponstein J (ed) Convexity and Duality in Optimization, Lecture Notes in
  Economics and Mathematical Systems, vol 256, Springer, Berlin, pp 37--70

\bibitem[{Huang et~al(2021)Huang, Lai, Varghese, and Xu}]{huang2021dc}
Huang M, Lai MJ, Varghese A, Xu Z (2021) On {DC} based methods for phase
  retrieval. In: Fasshauer GE, Neamtu M, Schumaker LL (eds) Approximation
  Theory XVI. AT 2019, Springer Proceedings in Mathematics \& Statistics, vol
  336, pp 87--121

\bibitem[{Johnstone and Moulin(2020)}]{johnstone2020faster}
Johnstone PR, Moulin P (2020) Faster subgradient methods for functions with
  {H}\"{o}lderian growth. Mathematical Programming 180:417--450

\bibitem[{Jourani(1996)}]{Jourani}
Jourani A (1996) Subdifferentiability and subdifferential monotonicity of
  $\gamma$-paraconvex functions. Control and Cybernetics 25:721--737

\bibitem[{Kabgani and Ahookhosh(2024)}]{Kabgani24itsopt}
Kabgani A, Ahookhosh M (2024) Its{OPT}: An inexact two-level smoothing
  framework for nonconvex optimization via high-order {M}oreau envelope. SIAM
  Journal on Optimization (revised positively)
  \urlprefix\url{https://arxiv.org/abs/2410.19928}

\bibitem[{Kabgani and Ahookhosh(2025)}]{Kabgani25itsdeal}
Kabgani A, Ahookhosh M (2025) Its{DEAL}: Inexact two-level smoothing descent
  algorithms for weakly convex optimization
  \urlprefix\url{https://doi.org/10.48550/arXiv.2501.02155}

\bibitem[{Kecis and Thibault(2015)}]{KecisThibault15}
Kecis I, Thibault L (2015) Moreau envelopes of $s$-lower regular functions.
  Nonlinear Analysis: Theory, Methods \& Applications 127:157--181

\bibitem[{Khanh~Hien et~al(2022)Khanh~Hien, Phan, Gillis, Ahookhosh, and
  Patrinos}]{khanh2022block}
Khanh~Hien LT, Phan DN, Gillis N, Ahookhosh M, Patrinos P (2022) Block
  {B}regman majorization minimization with extrapolation. SIAM Journal on
  Mathematics of Data Science 4(1):1--25

\bibitem[{Konno and Wijayanayake(2001)}]{Konno2001}
Konno H, Wijayanayake A (2001) Portfolio optimization problem under concave
  transaction costs and minimal transaction unit constraints. Mathematical
  Programming 89:233--250

\bibitem[{Ksenia et~al(2024)Ksenia, Welington, Sophie, and
  Wim}]{ksenia2024Minimizing}
Ksenia S, Welington dO, Sophie D, Wim vA (2024) {Minimizing the difference of
  convex and weakly convex functions via bundle method}. {Pacific journal of
  optimization} 20:699 --741

\bibitem[{Kungurtsev and Rinaldi(2021)}]{Kungurtsev2021zeroth}
Kungurtsev V, Rinaldi F (2021) A zeroth order method for stochastic weakly
  convex optimization. Computational Optimization and Applications 80:731--753

\bibitem[{La~Cruz et~al(2006)La~Cruz, Mart{\'\i}nez, and
  Raydan}]{laCruz2006spectral}
La~Cruz W, Mart{\'\i}nez JM, Raydan M (2006) Spectral residual method without
  gradient information for solving large-scale nonlinear systems of equations.
  Mathematics of Computation 75:1429--1448

\bibitem[{Lange(2016)}]{lange2016mm}
Lange K (2016) MM Optimization Algorithms. Society for Industrial and Applied
  Mathematics

\bibitem[{Li and Pong(2018)}]{Li18}
Li G, Pong TK (2018) Calculus of the exponent of {K}urdyka--Łojasiewicz
  inequality and its applications to linear convergence of first-order methods.
  Foundations of Computational Mathematics 18:1199--1232

\bibitem[{Li et~al(2023{\natexlab{a}})Li, Milzarek, and
  Qiu}]{li2023convergence}
Li X, Milzarek A, Qiu J (2023{\natexlab{a}}) Convergence of random reshuffling
  under the {K}urdyka–Łojasiewicz inequality. SIAM Journal on Optimization
  33:1092--1120

\bibitem[{Li et~al(2023{\natexlab{b}})Li, Lin, and Woodruff}]{li2023ell_p}
Li Y, Lin H, Woodruff D (2023{\natexlab{b}}) $\ell_p$-regression in the
  arbitrary partition model of communication. In: The Thirty Sixth Annual
  Conference on Learning Theory, PMLR, pp 4902--4928

\bibitem[{Liao et~al(2024)Liao, Ding, and Zheng}]{Liao2024error}
Liao FY, Ding L, Zheng Y (2024) Error bounds, {PL} condition, and quadratic
  growth for weakly convex functions, and linear convergences of proximal point
  methods. In: 6th Annual Learning for Dynamics \& Control Conference, PMLR,
  242, pp 993--1005

\bibitem[{Lu et~al(2018)Lu, Freund, and Nesterov}]{lu2018relatively}
Lu H, Freund RM, Nesterov Y (2018) Relatively smooth convex optimization by
  first-order methods, and applications. SIAM Journal on Optimization
  28(1):333--354

\bibitem[{Mangasarian(1996)}]{mangasarian1996machine}
Mangasarian OL (1996) Machine learning via polyhedral concave minimization. In:
  Fischer H, Riedmüller B, Schäeffler S (eds) Applied Mathematics and
  Parallel Computing -- Festschrift for Klaus Ritter, Physica, Heidelberg, pp
  175--188

\bibitem[{Mangasarian(2007)}]{Mangasarian2007Absolute}
Mangasarian OL (2007) Absolute value equation solution via concave
  minimization. Optimization Letters 1:3--8

\bibitem[{Montefusco et~al(2013)Montefusco, Lazzaro, and
  Papi}]{Montefusco2013fast}
Montefusco LB, Lazzaro D, Papi S (2013) A fast algorithm for nonconvex
  approaches to sparse recovery problems. Signal Processing 93:2636--2647

\bibitem[{Mordukhovich(2018)}]{Mordukhovich2018}
Mordukhovich BS (2018) Variational Analysis and Applications. Springer Cham

\bibitem[{Mukkamala and Ochs(2019)}]{Mukkamala2019Beyond}
Mukkamala MC, Ochs P (2019) Beyond alternating updates for matrix factorization
  with inertial bregman proximal gradient algorithms. In: Wallach H, Larochelle
  H, Beygelzimer A, d\textquotesingle Alch\'{e}-Buc F, Fox E, Garnett R (eds)
  Advances in Neural Information Processing Systems, Curran Associates, Inc.,
  vol~32

\bibitem[{Mukkamala et~al(2020)Mukkamala, Ochs, Pock, and
  Sabach}]{Mukkamala20coc}
Mukkamala MC, Ochs P, Pock T, Sabach S (2020) Convex-concave backtracking for
  inertial {B}regman proximal gradient algorithms in nonconvex optimization.
  SIAM Journal on Mathematics of Data Science 2(3):658--682

\bibitem[{Nesterov(2015)}]{Nesterov15univ}
Nesterov Y (2015) Universal gradient methods for convex optimization problems.
  Mathematical Programming 152:381--404

\bibitem[{Nesterov(2018)}]{Nesterov2018}
Nesterov Y (2018) Lectures on Convex Optimization, 2nd edn. Springer Cham

\bibitem[{Nesterov(2023)}]{Nesterov2023a}
Nesterov Y (2023) Inexact accelerated high-order proximal-point methods.
  Mathematical Programming 197:1--26

\bibitem[{Nesterov(1983)}]{nesterov1983method}
Nesterov YE (1983) A method for solving the convex programming problem with
  convergence rate $o(1/k^2)$. Doklady Akademii Nauk SSSR 269:543--547, (in
  Russian)

\bibitem[{de~Oliveira(2020)}]{deOliveira2020ABC}
de~Oliveira W (2020) The {ABC} of {DC} programming. Set-Valued and Variational
  Analysis 28:679--706

\bibitem[{de~Oliveira and Souza(2025)}]{deOliveira2025Progressive}
de~Oliveira W, Souza J (2025) A progressive decoupling algorithm for minimizing
  the difference of convex and weakly convex functions. Journal of Optimization
  Theory and Applications 204:36

\bibitem[{Patrinos and Bemporad(2013)}]{patrinos2013proximal}
Patrinos P, Bemporad A (2013) Proximal {N}ewton methods for convex composite
  optimization. In: 52nd IEEE Conference on Decision and Control, IEEE, pp
  2358--2363

\bibitem[{Pougkakiotis and Kalogerias(2023)}]{Pougkakiotis2023Zeroth}
Pougkakiotis S, Kalogerias D (2023) A zeroth-order proximal stochastic gradient
  method for weakly convex stochastic optimization. SIAM Journal on Scientific
  Computing 45:A2679--A2702

\bibitem[{Rahimi et~al(2025)Rahimi, Ghaderi, Moreau, and
  Ahookhosh}]{Rahimi2024}
Rahimi M, Ghaderi S, Moreau Y, Ahookhosh M (2025) Projected subgradient methods
  for paraconvex optimization: Application to robust low-rank matrix recovery.
  \urlprefix\url{https://arxiv.org/abs/2501.00427}

\bibitem[{Rinaldi et~al(2010)Rinaldi, Schoen, and
  Sciandrone}]{rinaldi2010concave}
Rinaldi F, Schoen F, Sciandrone M (2010) Concave programming for minimizing the
  zero-norm over polyhedral sets. Computational Optimization and Applications
  46:467--486

\bibitem[{Rockafellar(1976{\natexlab{a}})}]{rockafellar1976monotone2}
Rockafellar RT (1976{\natexlab{a}}) Monotone operators and the proximal point
  algorithm. SIAM Journal on Control and Optimization 14(5):877--898

\bibitem[{Rockafellar(1976{\natexlab{b}})}]{rockafellar1976monotone}
Rockafellar RT (1976{\natexlab{b}}) Monotone operators associated with saddle
  functions and minimax problems. In: Browder FE (ed) Nonlinear Functional
  Analysis, Part 1, Symposia in Pure Mathematics, vol~18, American Mathematical
  Society, Providence, RI, pp 397--407

\bibitem[{Rockafellar and Wets(2009)}]{Rockafellar09}
Rockafellar RT, Wets RJB (2009) Variational Analysis. Springer Berlin,
  Heidelberg

\bibitem[{Rolewicz(2000)}]{Rolewicz00}
Rolewicz S (2000) On $\alpha(\cdot)$-paraconvex and strongly
  $\alpha(\cdot)$-paraconvex functions. Control and Cybernetics 29:367--377

\bibitem[{Rolewicz(2005)}]{rolewicz2005paraconvex}
Rolewicz S (2005) Paraconvex analysis. Control and Cybernetics 34:951--965

\bibitem[{Salzo and Villa(2012)}]{Salzo12}
Salzo S, Villa S (2012) Inexact and accelerated proximal point algorithms.
  Journal of Convex Analysis 19(4):1167--1192

\bibitem[{Solodov and Svaiter(2001)}]{Solodov01}
Solodov MV, Svaiter BF (2001) A unified framework for some inexact proximal
  point algorithms. Numerical Functional Analysis and Optimization
  22(7-8):1013--1035

\bibitem[{Stella et~al(2017)Stella, Themelis, and Patrinos}]{Stella17}
Stella L, Themelis A, Patrinos P (2017) Forward–backward quasi-{N}ewton
  methods for nonsmooth optimization problems. Computational Optimization and
  Applications 67:443--487

\bibitem[{Sun et~al(2016)Sun, Babu, and Palomar}]{sun2016majorization}
Sun Y, Babu P, Palomar DP (2016) Majorization-minimization algorithms in signal
  processing, communications, and machine learning. IEEE Transactions on Signal
  Processing 65(3):794--816

\bibitem[{Takahashi and Takeda(2025)}]{takahashi2025approximate}
Takahashi S, Takeda A (2025) Approximate {B}regman proximal gradient algorithm
  for relatively smooth nonconvex optimization. Computational Optimization and
  Applications 90:227--256

\bibitem[{Takahashi et~al(2022)Takahashi, Fukuda, and
  Tanaka}]{takahashi2022new}
Takahashi S, Fukuda M, Tanaka M (2022) New {B}regman proximal type algorithms
  for solving {DC} optimization problems. Computational Optimization and
  Applications 83:893--931

\bibitem[{Themelis et~al(2018)Themelis, Stella, and Patrinos}]{Themelis18}
Themelis A, Stella L, Patrinos P (2018) Forward-backward envelope for the sum
  of two nonconvex functions: Further properties and nonmonotone linesearch
  algorithms. SIAM Journal on Optimization 28(3):2274--2303

\bibitem[{Tran-Dinh et~al(2022)Tran-Dinh, Pham, Phan, and
  Nguyen}]{trandinh2022hybrid}
Tran-Dinh Q, Pham NH, Phan DT, Nguyen LM (2022) A hybrid stochastic
  optimization framework for composite nonconvex optimization. Mathematical
  Programming 191:1005--1071

\bibitem[{Villa et~al(2013)Villa, Salzo, Baldassarre, and
  Verri}]{villa2013accelerated}
Villa S, Salzo S, Baldassarre L, Verri A (2013) Accelerated and inexact
  forward-backward algorithms. SIAM Journal on Optimization 23(3):1607--1633

\bibitem[{Vinod et~al(2022)Vinod, Israel, and Topcu}]{Vinod2022Constrained}
Vinod AP, Israel A, Topcu U (2022) Constrained, global optimization of unknown
  functions with {L}ipschitz continuous gradients. SIAM Journal on Optimization
  32:1239--1264

\bibitem[{Yang et~al(2019)Yang, Shen, Ma, Chen, Gu, and So}]{Yang2019Weakly}
Yang C, Shen X, Ma H, Chen B, Gu Y, So HC (2019) Weakly convex regularized
  robust sparse recovery methods with theoretical guarantees. IEEE Transactions
  on Signal Processing 67:5046--5061

\bibitem[{Yashtini(2016)}]{yashtini2016global}
Yashtini M (2016) On the global convergence rate of the gradient descent method
  for functions with {H}{\"o}lder continuous gradients. Optimization Letters
  10:1361--1370

\bibitem[{Yu et~al(2022)Yu, Li, and Pong}]{Yu2022}
Yu P, Li G, Pong TK (2022) {K}urdyka--Łojasiewicz exponent via inf-projection.
  Foundations of Computational Mathematics 22:1171--1217

\bibitem[{Yu and Küçükyavuz(2023)}]{yu2023strong}
Yu Q, Küçükyavuz S (2023) Strong valid inequalities for a class of concave
  submodular minimization problems under cardinality constraints. Mathematical
  Programming 201:803--861

\end{thebibliography}

\end{document}